\theoremstyle{plain}
\newtheorem{thm}{Theorem}[section]
\newtheorem{prop}[thm]{Proposition}
\newtheorem{sublem}[equation]{Lemma}
\newtheorem{subcor}[equation]{Corollary}
\newtheorem{subprop}[equation]{Proposition}
\theoremstyle{definition}
\newtheorem{cosa}[thm]{}
\newtheorem{subcosa}[equation]{}
\newtheorem{exs}[thm]{Examples}
\theoremstyle{remark}
\newtheorem*{rem}{Remark}
\numberwithin{equation}{thm}
\newcommand{\D}{\boldsymbol{\mathsf{D}}}
\newcommand{\bE}{\boldsymbol{\mathsf{E}}}
\newcommand{\LL}{\mathsf L}
\newcommand{\R}{\mathsf R}
\renewcommand{\SS}{\mathsf{S}}
\newcommand{\Da}{\boldsymbol{\mathsf{a}}}
\newcommand{\Db}{\boldsymbol{\mathsf{b}}}
\newcommand{\Dd}{{\boldsymbol{\mathsf{d}}}}
\newcommand{\Dc}{\boldsymbol{\mathsf{c}}}
\newcommand{\ZZ}{\mathbb Z}
\newcommand{\Rf}{\R f^{}_{\<\<*}}
\newcommand{\Lf}{\LL f^*}
\newcommand{\cf}{\mathsf{cf}}
\newcommand{\co}{\mathsf{co}}
\newcommand{\fsh}{f^{}_{\mkern-1.5mu\sharp}}
\newcommand{\fst}{{f^{}_{\<\<*}}}
\newcommand{\qc}{\mathsf{qc}}
\newcommand{\Dqc}{\D_{\mathsf{qc}}}
\newcommand\Dqcpl{\D_\qc^{\lift.95,\text{\cmt\char'053},}}
\newcommand\upl{{\lift1,\text{\cmt\char'053},}}
\font\cmt=cmtex10
\newcommand{\CC}{\mathcal C}
\newcommand{\cd}[1]{\mathcal D_{\!#1}}
\newcommand{\CH}{\mathcal H}
\newcommand{\CO}{\mathcal O}
\newcommand{\bpic}{\begin{tikzpicture}}
\newcommand{\epic}{\end{tikzpicture}}
\newcommand{\Otimes}[1]{\otimes^\LL_{#1}}
\newcommand{\botimes}[1]{\boldsymbol\otimes_{#1}}
\newcommand{\totimes}{\,\bar\otimes\,}
\newcommand{\sHom}{\CH om}
\newcommand{\set}{\!:=}
\newcommand{\sX}{{\<\<X}}
\newcommand{\sst}{\scriptstyle}
\newcommand{\sss}{\scriptscriptstyle}
\newcommand{\smallcirc}{{\>\>\sst{\circ}\,}}
\newcommand{\liftcirc}{{\lift1,\,\sst{\circ}\,,}}
\newcommand{\ssscirc}{\lift.8,\,{\sss{\circ}\,},}
\newcommand{\<}{\mkern-1mu}
\renewcommand{\>}{\mkern1mu}
\newcommand{\va}[1]{\vspace{#1pt}}
\newcommand{\halfsize}[1]{{\scalebox{.6}{#1}}}
\newcommand{\kf}{\kern.5pt }
\def\lift#1,#2,{\vbox to 0pt{\vskip-#1 ex\hbox{$\scriptstyle #2$}\vss}}
\newcommand{\OX}{\mathcal O_{\<\<X}}
\newcommand{\OY}{\mathcal O_Y}
\newcommand{\OZ}{\mathcal O_{\<Z}}
\newcommand{\OS}{\mathcal O_{\mkern-1.5mu S}}
\newcommand{\OW}{\mathcal O_W}
\newcommand{\uniasterisco}[1]{\eta^{}_{#1}}
\newcommand{\uniasteriscode}[1]{\bar\eta^{}_{#1}\colon \id \to \R {#1}_* \LL {#1}^*}
\newcommand{\couniasterisco}[1]{\epsilon^{}_{#1}}
\newcommand{\couniasteriscode}[1]{\bar\epsilon^{}_{#1}\colon \LL {#1}^*\R {#1}_* \to \id}
\newcommand{\couni}[1]{{\smallint}_{#1}}
\newcommand{\counitimes}[1]{{\smallint}^{}_{\!#1}}
\newcommand\tst{\otimes_{\<*}}
\newcommand{\bchasterisco}[1]{\theta_{\<#1}}
\newcommand{\bchadmirado}[1]{\mathsf{B}_{\mkern-.5mu#1}}
\newcommand{\fundamentalclass}[1]{{\boldsymbol{\mathsf{c}}}_{#1}}
\newcommand{\fundamentalclassa}[1]{{\boldsymbol{\mathsf{a}}}_{#1}}
\newcommand{\fundamentalclassb}[1]{{\boldsymbol{\mathsf{b}}}_{#1}}
\newcommand{\biE}[4]{{\operatorname{HH}}^{#1}({#3} \xto{#2} {#4})}
\newcommand{\bsub}[1]{{{#1}}_{\sharp}}
\newcommand{\bup}[1]{#1{}^{\sharp}}
\newcommand{\pf}[1]{{#1}^{}_{\<\<{\star}}}
\newcommand{\pb}[1]{{#1}^{\star}}
\newcommand{\circled}[1]{\textcircled{\scriptsize{#1}}}
\newcommand{\Hsch}[1]{\CH_{\mathstrut#1}}
\newcommand{\lto}{\longrightarrow}
\newcommand{\xto}{\xrightarrow}
\newcommand\iso{{\mkern8mu\longrightarrow \mkern-25.5mu{}^\sim\mkern17mu}}
\DeclareMathOperator{\h}{H}
\DeclareMathOperator{\spec}{Spec}
\DeclareMathOperator{\Hom}{Hom}
\DeclareMathOperator{\tor}{Tor}
\DeclareMathOperator{\ext}{Ext}
\DeclareMathOperator{\id}{id}
\DeclareMathOperator{\HH}{HH}
\DeclareMathOperator{\via}{{\textup{via}}}
\DeclareMathOperator{\ps}{\mathsf{ps}}
\begin{document}
\date{}

\title[Bivariance, Grothendieck duality, Hochschild homology]{Bivariance, Grothendieck duality and  Hochschild homology}

\author[L. Alonso]{Leovigildo Alonso Tarr\'{\i}o}
\address{Departamento de \'Alxebra\\
Facultade de Matem\'a\-ticas\\
Universidade de Santiago de Compostela\\
E-15782  Santiago de Compostela, SPAIN}
\email{leo.alonso@usc.es}
\urladdr{http://webspersoais.usc.es/persoais/leo.alonso/index-en.html}

\author[A. Jerem\'{\i}as]{Ana Jerem\'{\i}as L\'opez}
\address{Departamento de \'Alxebra\\
Facultade de Matem\'a\-ticas\\
Universidade de Santiago de Compostela\\
E-15782  Santiago de Compostela, SPAIN}
\email{ana.jeremias@usc.es}

\author[J. Lipman]{Joseph Lipman}
\address{Department of Mathematics\\
Purdue University\\
West Lafayette IN 47907, USA}
\email{lipman@math.purdue.edu}
\urladdr{http://www.math.purdue.edu/\~{}lipman/}

\thanks{Authors partially supported by 
Spain's MICIIN and E.U.'s FEDER research project MTM2008-03465.
Third author also partially supported over time by NSF and NSA}

\dedicatory{To Heisuke Hironaka, on the occasion of his 80th birthday}

\date{\today}
\subjclass[2000]{Primary 14F99}
\keywords{Hochschild homology, bivariant, Grothendieck duality, fundamental class}

\begin{abstract}
A procedure for  constructing bivariant theories by means of Grothendieck duality is developed. This produces, in particular, a bivariant theory of Hochschild (co)homology 
on the category of schemes that are flat, separated and essentially of finite type  
over a \mbox{noetherian} scheme $S$. The theory takes values in
the category of symmetric graded modules over the graded-commutative ring
$\oplus_i\> \textup{H}^i(S,\OS\<)$. In degree $i$, 
the cohomology and homology  $\textup{H}^0(S,\OS\<)$-modules thereby associated 
to such an $x\colon X\to S$,
with Hochschild complex\vspace{.6pt}  $\mathcal H_x$, are 
\smash{Ext$^i_{\OX}\<\<(\mathcal H_x, \mathcal H_x)$} and 
\smash{Ext$^{-i}_{\OX}\<\<(\mathcal H_x, x^!\mathcal O_{\<\<S})\ (i\in\mathbb Z)$.}\vspace{.6pt} 
This  lays the foundation for a sequel that will treat orientations in bivariant Hochschild theory
through canonical \emph{relative fundamental class maps}, unifying and generalizing previously
known manifestations, via differential forms, of such maps.
\end{abstract}

\maketitle

\tableofcontents

\section*{Introduction}

Grothendieck duality is a cornerstone of cohomology theory for quasi-coherent sheaves in Algebraic Geometry. It relates the classical theory of the canonical linear system of a variety to an analogue of Poincar\'e duality. Indeed, one of the outstanding features of Grothendieck duality  is the interplay between concrete and  abstract aspects of the theory, the former being expressed  in terms of differentials and residues, while the latter are conveyed in terms of a  formalism of certain functors between derived categories---the Grothendieck operations, and a web of relations among them
(see, e.g.,~\cite{li}). These two aspects are linked by the \emph{fundamental class}
of a scheme\kf-map.

In its usual incarnation the fundamental class is, for  a noetherian-scheme map 
\mbox{$x \colon X \to S$}
that is  separated, essentially  finite type, perfect (i.e., of  finite flat dimension
or finite tor-dimension), and  equidimensional of relative dimension~$n$, 
a canonical derived-category map from suitably shifted top-degree relative 
differentials to the relative dualizing complex:
\[
C_{X|S} \colon \Omega^n_{X|S}[n] \to x^!\CO_S,
\]
where $x^!$ is the  twisted inverse image functor which is the principal actor in Grothendieck duality
theory; or equivalently, a map of coherent sheaves
\begin{equation}\label{fund1}
c_{X|S} \colon \Omega^n_{X|S} \to \omega_{X|S}\!:=H^{-n}x^!\OS,
\end{equation}
where $\omega_{X|S}$ is the relative dualizing (or canonical) sheaf associated to $x$.

In case $x$ is a smooth map, $c_{X|k}$ is the isomorphism that is well-known from Serre duality.
More general situations have been studied in various contexts, local and global, algebraic and analytic, e.g., \cite{aez}, \cite{ang}, \cite{anl}, \cite{KW}, \cite{Kd}. In \cite{blue}, there is a concrete treatment of the case
when  $S = \spec(k)$ with $k$ a perfect field and $X$  an integral algebraic scheme over $k$. The map $c_{X|k}$ is  realized there as a globalization of the local
residue maps at the points of~$X\<$,  leading to explicit versions of local and global duality and the relation between them. These results are generalized to certain maps of noetherian schemes
in \cite{HS}. 
In all these approaches, an important role is played---via factorizations  of~$x$ as
smooth$\smallcirc$finite---by the case  $n=0$, where  the notion of  fundamental class is equivalent to   that of traces of differential forms. 

After \cite{white} it became clear  that Hochschild homology and cohomology play a role in this circle of ideas. The connection with differentials comes via canonical maps from differential forms to
sheafified  Hochschild homology. 

Over schemes, the theory of Hochschild homology and cohomology goes back to work of Gerstenhaber and Shack \cite{gs} on deformation problems, see \cite{BF}, \cite{gwet},  \cite{c2} and \cite{cw}. Recently, more refined versions of the theory have been  developed,  in \cite{BF} and \cite{LV}. \va1

Our first main task is to construct, over a fixed noetherian base scheme~$S$, a \emph{bivariant theory} \cite{fmc}, taking values in  derived categories of complexes with quasi-coherent homology,
those categories being enriched by graded modules over the graded-commutative ring $H\set\oplus_{i\in\ZZ}\,H^i(S,\OS)$.

The construction makes use of properties of
the Hochschild complex 
 $\Hsch{x}\>$ of a separated, essentially finite\kf-type, perfect map
 $x\colon X\to S$---that is, the derived-category object 
$\LL{\delta}^*\R{\delta}_*\OX$ where \mbox{$\delta \colon  X\to X \times_S X$} is the diag\-onal map---and on basic facts from Grothendieck duality theory. (Strictly speaking, this~$\Hsch{x}$ should be called the ``Hochschild complex"\va{-.7} only when $x$ is \emph{flat}.) The $H$-module thereby associated to a morphism $f\colon (X\xto{\lift.5,x,\>}S)\to (Y\xto{\lift.7,y,\>}S)$ of such $S$-schemes is\va{-2}
\[
\HH^*(f)\!:=\oplus_{i\in\ZZ}\> \ext^i_{\OX}(\Hsch{x},f^!\>\Hsch{y})
=\oplus_{i\in\ZZ}\> \Hom_{\D(X)}\!\big(\Hsch{x},f^!\>\Hsch{y}[i\>]\big),
\]
\vskip-2pt
\noindent 
so that the associated cohomology groups are\va{-1}
 \[
\HH^i(X|S) \!:= \HH^i(\id_{\<\<X}\<) = \ext^i_{\OX}(\Hsch{x},\Hsch{x})
\] 
\vskip-2pt
\noindent
and the associated homology groups are\va{-1}
\[
\HH_i(X|S) \!:= \HH^{-i}(x) = \ext^{-i}_{\OX}(\Hsch{x},\>x^!\CO_S).
\]
\vskip-2pt
\noindent Over smooth $\mathbb C$-schemes, these bivariant homology groups have been studied in \cite{c1}, and in more sophisticated terms, in \cite{cw}. The bivariant cohomology groups
 form a graded algebra, of which the cohomology algebra  in \cite{c1} is an algebra retract.
(These bivariant groups are not to be confused with the bivariant cohomology groups in \cite[\S5.5.1]{Lo}.)
\vspace{1pt}
 
 The data constituting the bivariant theory are specified in section~\ref{BivHoch},
and the verification of the validity of the bivariant axioms is carried out in section~\ref{axioms}.
The construction is organized around purely category-theoretic properties of the derived direct- and inverse\kf-image pseudofunctors, and of the twisted inverse image pseudofunctor 
(see section~\ref{setup}), and of 
$\Hsch{x}$ (see section~\ref{BivHoch}).  This makes it applicable in other contexts where Grothendieck duality exists, like nonnoetherian schemes and noetherian formal schemes. Moreover, the few simple properties of $\Hsch{x}$ that are needed  are shared, for example, by 
the cotangent complex $L_{X|S}\>$, or  by the 
``true" Hochschild complex in \cite{BF}.

Section~\ref{realization} is devoted to showing that the formal properties in section~\ref{setup}  do come out of Grothendieck duality for separated essentially-finite-type perfect maps of noetherian schemes. It is only recently that duality theory has been made available for essentially-finite-type, rather than just finite-type, maps (see \cite{Nk}), making possible a unified treatment of local and global situations. That theory requires the tedious verification of commutativity of a multitude of
diagrams, and more of the same is needed for our purposes. That accounts in part for the length of 
section~\ref{realization}; but there is more to be checked, for example because of the upgrading of results about derived categories to the $H$-graded context. Thus the bivariant Hochschild theory, though quickly describable, as above, encompasses many relations.

To put the present results in context, let us discuss very briefly
our second main task, to be carried out in the sequel to this paper---namely,  to develop the notion of
the \emph{fundamental class} of an $f$ as above. This is an element\va{-1}
 $$
\varrho(f)\!:=\fundamentalclass{f}(\OY)\in\HH^0(f).
$$
\vskip-2pt
\noindent
In particular, when $y=\id_S$, one gets a map\va{-1}
in $\HH^0(x)=\HH_0(X|S)$,
\begin{equation*}
\varrho(x)\colon \Hsch{x}\to x^!\OS\>,
\end{equation*}
\vskip-2pt
\noindent
which together with a natural map $\Omega^i_{X|S}\to H^{-i}\>\Hsch{x}$ gives a map\va{-1}
$$
\Omega^i_{X|S}\to H^{-i}\>x^!\mathcal O_S\qquad(i\ge 0),
$$
\vskip-2pt
\noindent
that generalizes \eqref{fund1} when $x$ is flat, separated, 
and essentially finite type.

\penalty-5000
Two basic properties of 
the  fundamental class are: 

1)   \emph{Transitivity} vis-\`a-vis a composite map of $S$-schemes \mbox{$X\xto{u\>}Y\xto{v\>} Z$,} i.e.,
\[
\smash{\fundamentalclass{vu} = u^!\fundamentalclass{v} \smallcirc \>\fundamentalclass{u}v^*.}
\]

2)  \emph{Compatibility with essentially \'etale base change}.\va2

Transitivity gives in particular that 
$\fundamentalclass{vu}(\OZ)=u^!\fundamentalclass{v}(\OZ)\>\smallcirc\>\>\fundamentalclass{u}(\OY)$.
\vspace{1pt}
In terms of the bivariant product $\HH^0(u)\times\HH^0(v)\to \HH^0(vu)$, this says:
$$
\varrho(vu)=\varrho(u)\cdot\varrho(v).
$$
Thus the family $\varrho(f)$ is a family of \emph{canonical orientations},  compatible with essentially \'etale base change, for 
the flat maps  in our bivariant theory \cite[p.\,28, 2.6.2]{fmc}.

With this in hand, one can apply the general considerations in \cite{fmc} to obtain, for example,  
\emph{Gysin morphisms,} that provide ``wrong-way" functorialities for homology and 
cohomology.

\section{Review of graded categories and functors}
\label{H-graded}
Let there be given a  \emph{graded-commutative} ring
$H=\oplus_{i\in\ZZ} \,H^i$,
$$
hh'=(-1)^{mn}h'h\in H^{m+n}\qquad(h\in H^n\<,\; h'\in H^m).
$$ 
We will use the language of $H\<$\emph{-graded categories}. So let us recall  some of the relevant basic notions.

\begin{cosa} 
A category $\bE$ is \emph{$H\<\<$-graded} if  

(i) for any  objects $A\<$, $B$ in $\bE$, the set $\bE(A,B)$ of arrows from $A$ to $B$ is equipped with a  \emph{symmetric graded $H\<$-\kern.5pt module} structure: $\bE(A,B)$ is a graded abelian group \looseness=-1
$$
\bE(A,B)=\oplus_{i\in\ZZ} \,\bE^i(A,B)
$$ 
with both left and right graded $H\<$-module structures such that 
$$
h\>\alpha=(-1)^{mn}\alpha \>h\qquad\big(h\in H^n\<,\; \alpha\in\bE^m(A,B)\big),
$$ 
(so  each of these structures determines the other); and further, 

(ii) for any $C\in\bE$, the~composition map $\bE(B,C)\times\bE(A,B)\xto{\lift-.4,\,\ssscirc\,,}\bE(A,C)$ is 
\emph{ graded $H\<\<$-bilinear}\kern.5pt: it is $\ZZ\>\>$-bilinear, and such that for  $\beta\in\bE^m(B,C)$,
$\alpha\in\bE^n(A,B)$, $h\in H$, it~holds that $\beta\<\ssscirc \<\alpha\in\bE^{m+n}(A,C)$ and
$$
(h\>\beta)\<\ssscirc\<\alpha=h(\beta\<\ssscirc\<\alpha),\qquad
\beta\<\ssscirc\<(\alpha h)=(\beta\<\ssscirc\<\alpha)h\>.
$$

It follows that $(\beta\>h)\<\ssscirc\alpha=\beta\<\ssscirc \<(h\>\alpha)$, and then that composition factors uniquely through a homomorphism of symmetric graded $H\<$-modules
$$
\bE(B,C)\otimes_{\<H}\>\bE(A,B)\<\to\bE(A,C).
$$

Any full subcategory of an $H\<$-graded category $\bE$ is naturally $H\<\<$-graded.\va1

\begin{subcosa}\label{H-algebra}
For any object $A$ in an $H\<\<$-graded category $\bE$, $\bE(A,A)$ has a natural graded $H\<$-algebra structure. Indeed, the identity $\id_A$, being idempotent, is in $\bE^0(A,A)$, and the map
 $\tau^{}_{\!A}\colon H\to\bE(A,A)$ such that for all $n$ and $h\in H^n$,
$$
\tau^{}_{\!A}(h)=h\<\id_A=\id_Ah\in\bE^n(A,A)
$$
is a graded-ring homomorphism---since
$$
(h\<\id_A)\ssscirc(h'\id_A)=h(\id_A\<\<\ssscirc(\id_Ah'))=h((\id_A\<\<\ssscirc\<\id_A)h')=hh'\id_A\textup{---}
$$
that takes $H$  into the graded center of $\bE(A,A)$---since for $\alpha\in\bE^m(A,A)$, 
$$
(h\<\id_A)\ssscirc\alpha=h(\id_A\!\ssscirc\>\alpha)=h\alpha
=(-1)^{mn}(\alpha h)\ssscirc\<\<\id_A=(-1)^{mn}\alpha\<\ssscirc(h\<\id_A).
$$
\end{subcosa}

\begin{subcosa}\label{preadditive}
A \emph{preadditive category} is an $H\<$-graded category with $H=\oplus_{i\in\ZZ} \,H^i$,  the graded ring such that $H^0=\ZZ$ and $H^i=(0)$ for all $i\ne 0$.\va1
\end{subcosa}
\end{cosa}

\begin{cosa}\label{graded functors}

Let $\bE_1$ and $\bE_2$ be $H\<$-graded categories. A  functor \mbox{$F\colon\bE_1\to\bE_2$} is said to be
\emph{$H\<\<$-graded} if the maps $\bE_1(A,B)\to \bE_2(FA, FB)\ (A,B\in\bE_1)$
associated to $F$ are graded $H\<$-linear. 

Another  $H\<$-graded functor $G$ being given, a \emph{functorial map\/ $\xi\colon F\to G$ of degree~$n$} is a 
family of arrows $\xi^{}_A\in\bE_2^n(FA,GA)\ (A\in\bE_1)$ such that for any $\alpha\in\bE_1^m(A,B)$,
it holds that $(G\alpha)\<\ssscirc \xi^{}_A = (-1)^{mn} \xi^{}_B\ssscirc (F\alpha)$; in other words, the following diagram commutes up to the sign $(-1)^{mn}\>$:
\begin{equation}\label{(-1)^mn}
\CD
F\<A @>\xi^{}_A>> GA\\
@V F\alpha VV @VV G\alpha V \\
F\<B @>>\lift1.2,\xi^{}_B,> GB
\endCD
\end{equation}

Composing a functorial map of degree $n_1$ with one of degree $n_2$ produces one of degree 
$n_1+n_2$.
\end{cosa}

\begin{cosa}\label{graded center}
The \emph{graded center}  $\CC=\CC_{\bE}$ of an $H\<$-graded category $\bE$ is, to begin with, the 
graded abelian group whose $n$-th degree homogeneous component
$\CC^n$ consists of the  degree\kern.5pt-$n$ self-maps of the identity functor $\id_{\bE}$ of $\bE$.

This $\CC_{\bE}$ does not change when $H$ is replaced by the trivially graded ring~$\ZZ$.\looseness=-1

Composition of functorial maps gives a product 
$$
\CC^m\times \CC^n\to\CC^{m+n}\qquad (m,n\in\ZZ),
$$
for which, evidently, if $\xi\in \CC^m$ and $\zeta\in \CC^n$ then $\xi\zeta=(-1)^{mn}\zeta\xi$. 
Hence $\CC$ can be viewed, via the
graded-ring homomorphism $\tau\colon H\to\CC$ that takes $h\in H^n$ to the family $\tau^{}_{\!A}(h)=h\<\id_A\in\bE^n(A,A)\ (A\in\bE)$, as
a \emph{graded-commutative graded $H\<$-algebra}. 

For $\xi\in\CC^n\<$, composition with $\xi^{}_A$ (resp.~$\xi^{}_B$) maps 
$\bE^m(A,B)$ to $\bE^{m+n}(A,B)$; this produces a symmetric graded $\CC$-module structure on  
$\bE(A,B)$. Hence the~category $\bE$ is
$\CC$-graded. The original $H\<$-grading is obtained from the $\CC$-grading by restricting scalars 
via $\tau$.

In the case $H=\CC$, the above map  $\tau^{}_{\!A}$ becomes the \emph{evaluation map}  
\begin{equation}\label{evaluation}
\postdisplaypenalty10000
\textup{ev}^{}_{\!A}\colon\CC\to \bE(A,A)
\end{equation}
taking $\xi\in\CC^n$ to the map 
$\xi^{}_A\>$.

\end{cosa}

\begin{cosa}\label{unital}
The tensor product $\bE_1\botimes H\bE_2$ of  $H\<$-graded categories is the \mbox{$H\<$-graded} category whose objects are pairs $(A_1,A_2)\ (A_1\in\bE_1,\;A_2\in\bE_2)$, and~such that\looseness=-1
$$
(\bE_1\botimes H\bE_2)\big((A_1,A_2),\:(B_1,B_2)\big)\set \bE_1(A_1,B_1)\otimes_H \>\bE_2(A_2,B_2)
$$
with the obvious symmetric graded $H\<$-module structure, composition 
\begin{multline*}
\big(\bE_1(B_1,C_1)\otimes_H \>\bE_2(B_2,C_2)\big)
\times
\big(\bE_1(A_1,B_1)\otimes_H \>\bE_2(A_2,B_2)\big)\\
\lto\,
\bE_1(A_1,C_1)\otimes_H \>\bE_2(A_2,C_2)
\end{multline*}
being derived from the graded 
$H\<$-quadrilinear map 
$$
\bE_1(B_1,\<C_1)\<\times \bE_2(B_2,\<C_2)\<\times\bE_1(A_1,B_1)\<\times\bE_2(A_2,B_2)
\<\to \bE_1(A_1,\<C_1)\otimes_{\<\<H}\> \bE_2(A_2,\<C_2)
$$
such that for all $A_1\xto{\alpha_1\,}B_1\xto{\beta_1\,}C_1$ in $\bE_1$ and $A_2\xto{\alpha_2\,}B_2\xto{\beta_2\,}C_2$ in $\bE_2$,\va1 with
$\alpha_1\in\bE_1^{m_1}(A_1,B_1)$ and $\beta_2\in\bE_2^{n_2}(B_2,C_2)$, it holds that 
$$
 (\beta_1, \beta_2,\alpha_1,\alpha_2)\mapsto (-1)^{n^{}_2m^{}_1}(\beta_1\ssscirc\alpha_1)\otimes(\beta_2\ssscirc\alpha_2).
 $$
In particular,
$$
(\beta_1\otimes \beta_2)\ssscirc(\alpha_1\otimes \alpha_2)= (-1)^{n^{}_2m^{}_1}(\beta_1\ssscirc\alpha_1)\otimes(\beta_2\ssscirc\alpha_2)\colon A_1\otimes A_2\to C_1\otimes C_2.
$$
\begin{subcosa}
\emph{Notation.} Given  $A_k, B_k\in\bE_k,
\alpha_k\in\bE_k(A_k,B_k)\ (k=1,2)$, and a functor $\totimes\colon\bE_1\botimes H\bE_2\to\bE$, 
set\va{-3}
\begin{gather*}
A_1\totimes A_2\set\totimes\<\<(A_1,A_2),\\[2pt]
\alpha_1\totimes \alpha_2\set\totimes\<\<(\alpha_1\otimes\alpha_2)
\colon A_1\totimes A_2\to B_1\totimes\<B_2\>.
\end{gather*}
\end{subcosa}

\begin{subcosa}
\!A \emph{unital product} on an $H\<$-graded category $\bE$ is a quadruple $(\totimes\!,\<\CO\<\<,\<\lambda,\<\rho)$ where:\va1

\noindent (i)  $\totimes\!\colon\bE\botimes H\bE\to\bE$ is an $H\<$-graded functor,

\noindent(ii) $\CO$ is an object in $\bE$ (whence, by (i), there are $H\<$-graded endofunctors of~
$\>\bE$ taking $A\in\bE$ to $\CO\totimes A$ and to $A\totimes\CO$, respectively), and\va1

\noindent(iii) $\lambda\colon(\CO\totimes -)\iso \id_{\bE}$ and $\rho\colon (-\totimes \CO)\iso \id_{\bE}$ are degree\kern.5pt-0 functorial isomorphisms such that $\lambda^{}_\CO=\rho^{}_{\<\CO}\colon \CO\totimes\CO\iso\CO$.\va1
\end{subcosa}
 
\begin{subcosa}\label{unital->gradedcomm}
Given such a unital product, one verifies that the map that takes $\eta\in\bE^n(\CO,\CO)$ to the family $(\eta^{}_A)^{}_{\<A\in\>\bE}$ in~ $\CC^n$ such that $\eta^{}_A$ is the composite map
$$
A\underset{\<\lambda_A^{\<-\<1}\,\>}\iso \CO\totimes A \xto[\eta\totimes\!\id_{\<A}\<\<]{} 
\CO\totimes A\underset{\lambda_A}\iso A
$$
is a homomorphism of graded $H\<$-algebras, 
right-inverse to $\textup{ev}_\CO\colon \CC\to \bE(\CO,\CO)$ (see~\eqref{evaluation}). 

Thus $\bE(\CO,\CO)$ is a graded-$H\<$-algebra retract of $\>\CC$, and so it is a graded-commutative $H\<\<$-algebra; and the $\CC$-grading on $\bE$ induces an $\bE(\CO,\CO)$-grading.

\end{subcosa}
\end{cosa}

\section{The underlying setup}\label{setup}
We now describe the formalism from which a bivariant theory will emerge in sections
~\ref{BivHoch} and~\ref{axioms}.  The formalism will be illustrated in section~\ref{realization} by several instances involving  Grothendieck duality.

\begin{cosa}\label{the category}
Fix a category $\SS$ and a graded-commutative ring $H$.

An \emph{orientation} of a relation $f{\smallcirc}\>v=u\>{\smallcirc}g$ among four $\SS$\kf-maps is  an ordered pair (right arrow, bottom arrow) whose members are $f$ and~$u$. This can be represented by one of two \emph{oriented commutative squares,} namely $\Dd$ with bottom arrow $u$, and its transpose $\Dd^\prime$ with bottom arrow $f$.
\[
 \begin{tikzpicture}[yscale=.9]
       \draw[white] (0cm,0.5cm) -- +(0: \linewidth)
      node (E1) [black, pos = 0.21] {$\bullet$}
      node (F1) [black, pos = 0.39] {$\bullet$}
      node (E2) [black, pos = 0.61] {$\bullet$}
      node (F2) [black, pos = 0.79] {$\bullet$};
      \draw[white] (0cm,2.5cm) -- +(0: \linewidth)
      node (G1) [black, pos = 0.21] {$\bullet$}
      node (H1) [black, pos = 0.39] {$\bullet$}
      node (G2) [black, pos = 0.61] {$\bullet$}
      node (H2) [black, pos = 0.79] {$\bullet$};
      \node (C1) at (intersection of G1--F1 and E1--H1) [scale=0.75] {$\Dd$};
      \draw [->] (G1) -- (H1) node[above, midway, sloped, scale=0.75]{$v$};
      \draw [->] (E1) -- (F1) node[below, midway, sloped, scale=0.75]{$u$};
      \draw [->] (G1) -- (E1) node[left, midway, scale=0.75]{$g$};
      \draw [->] (H1) -- (F1) node[right, midway, scale=0.75]{$f$};
      \node (C2) at (intersection of G2--F2 and E2--H2) [scale=0.75] {$\Dd^\prime$};
      \draw [->] (G2) -- (H2) node[above, midway, sloped, scale=0.75]{$g$};
      \draw [->] (E2) -- (F2) node[below, midway, sloped, scale=0.75]{$f$};
      \draw [->] (G2) -- (E2) node[left, midway, scale=0.75]{$v$};
      \draw [->] (H2) -- (F2) node[right, midway, scale=0.75]{$u$};
 \end{tikzpicture}
\]

Assume that the category $\SS$ is  equipped with a class of maps, whose members are called \emph{confined maps,} and a class of oriented commutative squares, whose members are called \emph{independent squares}; and that these classes  satisfy (A1), (A2), (B1), (B2) and (C) in \cite[\S2.1]{fmc}---identity maps and composites of confined maps are confined, vertical and horizontal composites of independent squares are independent, any $\Dd$ in which $f=g$ and in which $u$ and~$v$ are identity maps  is independent, and if in the independent square~$\Dd$ the map
$f$ (resp.~$u$) is confined then so is $g$ (resp.~$v$).
\end{cosa}

\begin{cosa}\label{f^* and f^!}
With terminology as in \S\ref{H-graded}, assume given: \va1

(i) for each object $W\in\SS$  an \emph{$H\<\<$-graded  category} 
$\D_W$,\va{.5}  and

(ii) \emph{contravariant  $H\<\<$-graded pseudofunctors} $(-)^*$ and~$(-)^!$ over~$\SS$,\va1 with values in the categories $\D_W$---that is,\va{.8} to each  $f\colon X\to Y$ in~$\SS$ 
there are assigned\va{-2} $H\<$-graded functors $f^*$ 
and~$f^!$ from $\D_Y$ to $\D_\sX$;\va1 and to each  $X\xto{f}Y\xto{g}Z$ in~$\SS$ there are assigned functorial isomorphisms of degree~0\looseness=-1
$$
\ps^*\colon f^*\<\<g^*\iso(gf)^*,\qquad \ps^!\colon f^!\<g^!\iso(gf)^!
$$
such that for any $X\xto{f}Y\xto{g\>\>}Z\xto{h\>\>}W$ in $\SS$, the corresponding diagrams
\begin{equation}\label{assoc}
\CD\mkern-30mu
 \bpic[xscale=2.5, yscale=1.5]
   \node(11) at (1,-1){$f^*\<\<g^*\<h^*$};
   \node(12) at (2,-1){$f^*(hg)^*$};
   \node(13) at (3.05,-1){$f^!\<g^!h^!$};
   \node(14) at (4,-1){$f^!(hg)^!$}; 

   \node(21) at (1,-2){$(gf)^*h^*$};
   \node(22) at (2,-2){$(hgf)^*$};
   \node(23) at (3.05,-2){$(gf)^!h^!$};
   \node(24) at (4,-2){$(hgf)^!$}; 
 
   \draw[->](11)--(12);
    \draw[->](11)--(21);
     \draw[->](13)--(14);
      \draw[->](13)--(23);
       \draw[->](12)--(22);
        \draw[->](14)--(24);
         \draw[->](21)--(22);
          \draw[->](23)--(24);

 \epic
 \endCD
\end{equation}
commute.

Replacing $(-)^*$ and~$(-)^!$ by isomorphic pseudofunctors, we may
assume further that if $f$ is the identity map of $X\<$, then $f^*$ (resp.~$f^!$) is the identity functor of 
$\>\D_\sX$, and that $\ps^*$ (resp.~$\ps^!$) is the identity transformation of the functor $g^*$ (resp.~$g^!$); and likewise if $g$ is the identity map of $Y\<$.

Suggesting identification via $\ps^*$ or~$\ps^!$, the notations 
\begin{equation*}
\CD
 \begin{tikzpicture}
      \draw[white] (0cm,0.1cm) -- +(0: \linewidth)
        node (c1) [black, pos = 0.295] {$f^* \<\<g^*$}
        node (c2) [black, pos = 0.42] {$(gf)^*,$}
        node (c3) [black, pos = 0.585] {$f^! \<g^!$}
        node (c4) [black, pos = 0.7] {$(gf)^!,$};
       \draw [-, double distance=2pt] (c1) -- (c2) 
         node[auto,  midway, scale=0.75]{$\ps^*$};
       \draw [-, double distance=2pt] (c3) -- (c4) 
         node[auto,  midway, scale=0.75]{$\ps^!$};
 \end{tikzpicture}
\endCD
\end{equation*}
will be used to represent these  functorial isomorphisms 
or their inverses.\va2

\emph{Henceforth, any pseudofunctor under consideration  will be assumed to have been modified so as to exhibit the above-described simple behavior with respect to identity maps.}

\end{cosa}

\begin{cosa}\label{bchado}
Assume that there is assigned to each independent square
$$
\CD
\bullet @>v>> \bullet\\
@V g VV @VV f V \\
\bullet@>{\vbox to 0pt{\vss\hbox{$\sst\Dd$}\vskip.23in}} > \lift1.2,u, >\bullet
\endCD
$$
a \emph{degree\kern.5pt-$\>0$ isomorphism of $H\<\<$-graded functors}
$$
\bchadmirado\Dd\colon v^*\!f^!\iso g^!u^*.
$$

These $\bchadmirado{\sst\Dd}$  are to satisfy
\emph{horizontal and vertical transitivity}: if the  composite square
 $\Dd^{}_{0}=\Dd^{}_{2}\mkern-.5mu\smallcirc\Dd^{}_{1}$
 (with $g$ resp.~$\<\<v$ deleted)
\[
\CD
\bullet @> \lift.9,v_{\lift .6,\sss1,},>> \bullet@> v^{}_2 >> \bullet
\vadjust{\kern-2pt}\\
@V \lift.85,h, VV @V \lift.4,g, VV @VV f V  \\
\vspace{-22pt}\\
\bullet @>{\vbox to
0pt{\vss\hbox{$\lift1.6,\displaystyle\sst\Dd^{}_{\<1},$}\vskip.18in}}
        >\lift1.2,u_1, >
\bullet@>{\vbox to
0pt{\vss\hbox{$\lift1.6,\displaystyle\sst\Dd^{}_{\<2},$}\vskip.18in}}
        >\lift1.2,u_2, >  \bullet
\endCD
\qquad\quad \lift1.3,{\text{\normalsize\it resp.}}, \qquad\quad\
\CD
\bullet @>w>> \bullet \vadjust{\kern-2pt} \\
@V \lift.35,g^{}_{1},VV @VV f^{}_{\<1} V \\
\bullet @>v>{\vbox to
0pt{\vss\hbox{$\lift4.1,\displaystyle\sst\Dd^{}_{\<1},$}\vskip.205in}}
> \bullet \vadjust{\kern-2pt} \\
@V \lift.35,g^{}_{2}, VV @VV f^{}_{\<2} V \\
\bullet@>{\vbox to 0pt{\vss\hbox{$\sst\Dd^{}_{\<2}$}\vskip.185in}} >\lift 1.2,
u, > \bullet
\endCD
\]
has independent constituents $\Dd^{}_{\<2}$ and $\Dd^{}_{\<1}$ (so that
$\Dd_0$ itself is independent), then 
the corresponding natural  diagram of functorial maps 
commutes\/$:$\looseness=-1
\begin{equation}\label{basechange1}
\qquad
\CD
(v_{\lift .6,\sss2,}v_{\lift .6,\sss1,})^*\<\<f^! @.
\overset{\bchadmirado{\<\sst\Dd^{}_{0}}}{\hbox to 0pt{\hss\hbox to 100pt{\rightarrowfill}\hss}}
@.
  h^! (u_2u_1)^*\\
\hbox to 0pt{\hss$\sst\ps^*\>\>$}@| @. @|\mkern-28mu{\sst h^!\!\ps^*}  \\
v_{\lift .6,\sss1,}^*v_{\lift .6,\sss2,}^*\<\<f^!
@>>\lift1.2,v_1^*\bchadmirado{\<\sst\Dd^{}_2}, >
v_{\lift .6,\sss1,}^*g^! u_2^*
  @>>\lift1.2,\bchadmirado{\<\sst\Dd^{}_1}, >
h^! u_1^*u_2^*
\endCD
\end{equation}
{\it resp.}
\begin{equation}\label{basechange2}
\qquad
\CD
 (g_2g_1)^! u^*
@.\overset{\bchadmirado{\<\sst\Dd^{}_{0}}}{\hbox to 0pt{\hss\hbox to 100pt{\leftarrowfill}\hss}}
@.
 w^*(f_2f_1)^! \\
\hbox to 0pt{\hss$\sst\ps^!\>\>$}@| @. @|\mkern-28mu{\sst w^*\!\ps^!}  \\
g_1^! g_2^! u^* @<<\lift1.2,\,g_1^!\< \bchadmirado{\<\sst\Dd^{}_2}, <  g_1^! v^*\mkern-2.5muf_2^!
@<<\lift1.2,\,\,\bchadmirado{\<\sst\Dd^{}_1}\<\<, <
w^*\mkern-2.5mu f_1^! f_2^!\\[5pt]
\endCD
\end{equation}

\smallskip
Assume further that if $u$ and $v$ are identity maps, or if $f$ and $g$ are identity maps, then $\bchadmirado{\sst\Dd}$ is the identity transformation.
\smallskip
\end{cosa}

\pagebreak[3]
\begin{cosa}\label{f_*}
Assume  given a \emph{covariant $H\<$-graded pseudofunctor} $(-)_*$  
(that is, a contra\-variant $H\<$-graded pseudofunctor
over the opposite category $\SS^{\mathsf o\mathsf p}$), with values in the categories $\D_W$. 
Thus there are degree\kern.5pt-0 functorial isomorphisms $\ps_*\colon(gf)_*\iso g_*f^{}_{\<\<*}$ satisfying the appropriate analogs
of \eqref{assoc} and the remarks after it. This isomorphism or its inverse will be 
represented as\looseness=-1
$$
(gf)_*\overset{\ps_*}{=\!=} g_*\fst.
$$

Assume further that this pseudofunctor is \emph{pseudofunctorially right-adjoint to}~$(-)^*$: for any  $\SS$\kf-map  $f\colon X\to Y\<$, the functor $f^{}_{\<\<*}\colon\D_\sX\to\D_Y$ is graded
right-adjoint to  $f^*\colon\D_Y\to\D_\sX$, that is, there are degree\kern.5pt-0 functorial unit and  counit maps
\begin{equation}\label{etaeps}
\eta=\eta^{}_{\<f}\colon\id \to f^{}_{\<\<*}f^*\quad\text{ and }\quad\epsilon=\epsilon^{}_{\<\<f}\colon f^*\<\<f^{}_{\<\<*}\to\id 
\end{equation}
such that for $A\in\D_Y$ and $C\in\D_\sX$ the corresponding compositions 
$$
f^{}_{\<\<*}A \xto{\eta^{}_{\<f^{}_{\<\<*}{\<A}}\>\>}  f^{}_{\<\<*}f^*\<\<f^{}_{\<\<*}A \xto{f^{}_{\<\<*}\epsilon^{}_{\!A}\>\>} f^{}_{\<\<*}A,\qquad
f^*\<C \xto{f^*\<\eta^{}_{C}\>} f^*\<\<f^{}_{\<\<*}f^*\<C \xto{\epsilon^{}_{\<\<f^*\<C}} f^*\<C
$$
are identity maps---or equivalently, the induced composite maps of symmetric graded $H\<$-modules
\begin{gather*}
\D_Y(A, \fst C)\to \D_\sX(f^*\!A, f^*\<\<\fst C)\to \D_\sX(f^*\!A,  C),\\
\D_\sX(f^*\!A,  C)\to \D_Y(\fst f^*\!A,  \fst C)    \to\D_Y(A, \fst C)
\end{gather*}
are \emph{inverse isomorphisms;}
and  for any $X\xto{f} Y\xto{g\>\>} Z$ in~$\SS$,
 the following diagram commutes:
\begin{equation}\label{adjpseudo}
\CD
 \bpic[xscale=2.7,yscale=1.75]
   \node(11) at (1,-1){$\id$};  
   \node(12) at (2,-1){ $g_*g^*$};
   \node(13) at (3,-1){$g_*(f^{}_{\<*}f^*g^*)$};
  
   \node(21) at (1,-2){$(gf)_*(gf)^*$};
   \node(22) at (2,-2){$g_*f^{}_{\<*}(gf)^*$};
   \node(23) at (3,-2){$ g_*f^{}_{\<*}f^*g^*$};

   \draw[->] (11)--(12) node[midway, above, scale=0.75]{$\eta^{}_g$}; 
   \draw[->] (12)--(13) node[midway, above, scale=0.75]{$\text{via}\;\eta^{}_{\<f}$};
  
   \draw [-, double distance = 2pt](21)--(22) node[midway, below=2.5pt,scale=0.75]{$\ps_*$};
   \draw [-, double distance = 2pt](23)--(22) node[midway, below=.5pt,scale=0.75]{$\text{via}\;\ps^*$};
  
   \draw [->](11)--(21) node[midway, left=1pt,scale=0.75]{$\eta^{}_{g\<f}$};
   \draw [-, double distance = 2pt](13)--(23);
  \epic 
 \endCD
 \end{equation}

Assume also that to each \emph{confined} map $f\colon X\to Y$ in $\SS$\va{-1.5} there is assigned a degree\kern.5pt-0 functorial map 
\begin{equation}\label{trace}
\couni{\<\<\!f}\colon f^{}_{\<\<*}f^!\to \id
\end{equation}
 satisfying \emph{transitivity}:\va1  for any $X\xto{f} Y\xto{g\>\>} Z$ in $\SS$  with $f$ and $g$ confined, the following diagram commutes
\begin{equation}\label{transitivity}
\CD
   \begin{tikzpicture}[xscale=3, yscale=1]
         \node (41) at (1,-1) {\raisebox{5pt}{$ (gf)_*(gf)^!$}};
      \node (43) at (2,-1) {\raisebox{5pt}{$g^{}_* f^{}_{\<\<*}(gf)^!$}};
    
      \node (53) at (3,-1) {\raisebox{5pt}{$ g^{}_* f^{}_{\<\<*}f^!g^!$}};
   
            \node (61) at (1,-3){\raisebox{5pt}{$\id$}};
     \node (63) at (3,-3) {\raisebox{5pt}{$\ \,g^{}_*g^!\,;$}};
           \draw [-, double distance=2pt]
                 (41) -- (43) node[above=1pt, midway, scale=0.75]{$ \ps_*$};
      \draw [-, double distance=2pt]
                 (43) -- (53) node[above=1pt,  midway, scale=0.75]{$\text{via}\;\ps^!$};                        
      \draw [->] (41) -- (61) node[left,  midway, scale=0.75]{$ \couni{\!gf}$};
      \draw [->] (53) -- (63) node[right, midway, scale=0.75]{$ \couni{\<\<\!f}$};
         \draw [->] (63) -- (61) node[below=1pt, midway, scale=0.75]{$ \couni{\!g}$};
  \end{tikzpicture}
 \endCD
\end{equation}
and  if $f$ is the identity map of $X$ then $\couni{\<\<\!f}$ is the identity transformation.\va1

\end{cosa}

\begin{cosa}\label{theta}
Associated to any oriented commutative square in $\SS$ 
\begin{equation*}
    \begin{tikzpicture}[yscale=.85]
      \draw[white] (0cm,0.5cm) -- +(0: \linewidth)
      node (E) [black, pos = 0.41] {$\bullet$}
      node (F) [black, pos = 0.59] {$\bullet$};
      \draw[white] (0cm,2.65cm) -- +(0: \linewidth)
      node (G) [black, pos = 0.41] {$\bullet$}
      node (H) [black, pos = 0.59] {$\bullet$};
      \draw [->] (G) -- (H) node[above, midway, sloped, scale=0.75]{$v$};
      \draw [->] (E) -- (F) node[below, midway, sloped, scale=0.75]{$u$};
      \draw [->] (G) -- (E) node[left,  midway, scale=0.75]{$g$};
      \draw [->] (H) -- (F) node[right, midway, scale=0.75]{$f$};
      \node (C) at (intersection of G--F and E--H) [scale=0.75] {$\Dd$};
    \end{tikzpicture}
  \end{equation*}
is the degree\kern.5pt-0 functorial map 
\[
\bchasterisco{\Dd}:  u^*\!f^{}_{\<\<*}\to g_*v^*
\]
 adjoint to
\[
 f^{}_{\<\<*}\xto{\<f^{}_{\<\<*}\eta^{}_v\>} f^{}_{\<\<*}v_*v^*\overset{\lift1.1,\ps_*,}{=\!=\!=} u_*g_*v^*,
 \]
i.e., $\theta_{\Dd}$ is the composition of the following chain of  functorial maps:
         \begin{equation}\label{def-of-bch-asterisco}
 u^*\!f^{}_{\<\<*}\xto{\!\!\textup{via}\;\eta^{}_v\>} u^*\!f^{}_{\<\<*}v_*v^*
 \overset{\lift1.3,\!\!\textup{via}\;\ps_*,}{=\!=\!=\!=} u^*\<u_*g_*v^*\xto{\epsilon^{}_u\>\>} g_*v^*\<.
\end{equation}

It is postulated that \emph{if\/ $\Dd$ is independent then\/ $\theta_{\Dd}$ is an isomorphism.}

 \end{cosa}
 
 \begin{cosa}\label{comm diag}
Finally, it is postulated that if $\Dd$ in~\ref{theta}  is independent and  $f$ (hence $g$) is confined, then the following diagram commutes
\begin{equation}\label{first diagram}
\CD
u^*\!f^{}_{\<\<*}f^! @>\theta_{\Dd}> >g_*v^*\!f^! \\
@Vu^*\<\<\couni{\<\<\!f}VV @VVg_*\bchadmirado{\Dd}V \\
u^* @<<\couni{\!g}< g_*g^!u^*
\endCD
\end{equation}
that is, the following diagram commutes
\[
\CD
u^*\!f^{}_{\<\<*}f^! @>\eta^{}_g>>g_*g^*u^*f^{}_{\<\<*}f^! 
  \hbox to 0pt{$\mkern10.5mu\overset{\text{via}\;\ps^*}{=\!=\!=\!=}\hss$}@.g_*v^*\!f^*\<\<f^{}_{\<\<*}f^!\\
@Vu^*\<\<\couni{\!\<\<f}VV @. @VV\text{via}\;\epsilon^{}_{\<f}V \\
u^* @<<\couni{\!g}< \ g_*g^!u^*@<<g_*\bchadmirado{\Dd}\<<\ g_*v^*\!f^!\>\>;
\endCD
\]
and if, in addition, $u$ (hence $v$) is confined, then with $\phi_{\Dd}$ the degree\kern.5pt-0 functorial map adjoint to the composite map
\[
v^*\!f^!u_* \xto{\ \bchadmirado{\Dd}\ }g^!u^*u_{\<*} \xto{g^!\epsilon^{}_{u}\,}g^!,
\]
the following diagram commutes
\begin{equation}\label{second diagram}
\CD
f^!u_*u^! @>\phi_{\Dd}>> v_*g^!u^!\\
@Vf^!\<\<\couni{\!u}VV @|\mkern-13mu\sst v_*\!\ps^! \\
f^! @<<\couni{\!v}< v_*v^!\<\<f^!
\endCD
\end{equation}
that is, the following diagram commutes
\[
\CD
f^!u_*u^! @>\eta^{}_v>> v_*v^*\!f^!u_*u^! @>\!\textup{via}\;\bchadmirado{\Dd}\,>>v_*g^!u^*u_{\<*}u^! \\
@V f^!\<\<\couni{\!u} VV @. @VV\text{via}\;\epsilon^{}_{u}V \\
f^! @<<\couni{\!v}< v_*v^!\<\<f^!\hbox to 0pt{$\mkern26.5mu\underset{v_*\!\ps^!}{=\!=\!=\!=}\hss$}@.v_*g^!u^!
\endCD
\]
\end{cosa}
\vskip3pt
This completes the description of the underlying setup.

\begin{rem}
The order of composition of the functors in the domain and target of 
 $\>\bchasterisco{\Dd} \colon u^* f^{}_{\<\<*}\to  g_*v^*$  indicates that
we are considering that orientation of the relation $f{\smallcirc}\>v=u\>{\smallcirc}g$ for which $u$ is the bottom arrow.
So when such a relation is given, we usually simplify notation by  writing $\bchasterisco{} \colon   u^*\!f^{}_{\<\<*}\to  g_*v^*$ instead of $\>\bchasterisco{\Dd} \colon   u^*\<\< f^{}_{\<\<*}\to  g_*v^*$; and likewise for $\bchadmirado{\Dd}$ and $\phi_{\Dd}$. 
 \end{rem}

\section{Defining a bivariant  theory}\label{BivHoch}

\begin{cosa}
In this section,  we define data that will be shown in the next section to constitute a bivariant
theory \cite{fmc}. The approach will be purely formal, but justified by concrete examples
(see~\ref{ex-setup} and ~\S\ref{comparison}).

\begin{subcosa}\label{review setup}
Fix a setup, that is, a category $\SS$  with confined maps and independent squares, a graded-commutative ring $H$,\va{.6} a family $(\D_W)_{W\in\SS}$ of $H\<$-graded categories,
$H\<$-graded $\D_W$-valued pseudofunctors $(-)^*$, $(-)^!$ and $(-)_*$ over~$\SS$ (the first two contravariant and the last covariant),  for each independent square~$\Dd$, degree\kf-0 functorial isomorphisms $\bchadmirado{\Dd}$ and 
$\theta_\Dd\>$,  for each $\SS$\kf-map~$f$, degree\kf-0 functorial maps 
\begin{equation*}
\eta=\eta^{}_{\<f}\colon\id \to f^{}_{\<\<*}f^*\quad\text{ and }\quad\epsilon=\epsilon^{}_{\<\<f}\colon f^*\<\<f^{}_{\<\<*}\to\id ,
\end{equation*}
and for each confined map, a degree\kf-0 functorial map
\begin{equation*}
\couni{\<\<\!f}\colon f^{}_{\<\<*}f^!\to \id,
\end{equation*}
all subject to the conditions specified in \S\ref{setup}. Assume further that $\SS$ has a final object $S$.
\end{subcosa}

\begin{subcosa}\label{Hsch}
One associates to the  pseudofunctor $(-)^*$
the ``fibered category" \mbox{$\mathsf p\colon\mathsf F\to\SS$,} where the category $\mathsf F$ has as objects the pairs $(W, C)$ such that $W\in\SS$ and $C\in\D_W$, and as morphisms the pairs 
$(f,\psi)\colon (X,A)\to(Y,B)$ such that $f\colon X\to Y$ is an $\SS$\kf-map and $\psi\colon  f^*B\to A$
is a $\D_\sX$-map, the composition of such morphisms being defined in the obvious way, and where
the functor $\mathsf p$ is ``projection to the first coordinate."  The bivariant theory will be constructed
 from a section $\mathsf s$---a right inverse---of $\mathsf p$. Such an $\mathsf s$ can be specified  without reference to $\mathsf F$ or $\mathsf p$, see \S\ref{HH}.

For any $W\in\SS$,  set $(W, \Hsch{W})\set \mathsf s(W)$.  
(This notation reflects our original\- motivation, the case where $\Hsch{\<X}$ is a 
\emph{Hochschild complex}, see example~\ref{ex-setup}(b) below.)

\emph{Assume throughout that if\/ $f\colon X\to Y$ is the bottom or top arrow of an independent square, then
the\/ $\mathsf s$\kf-induced map\/ $ f^*\Hsch{Y}\to\Hsch{\<X}$ is an isomorphism.}\va1

We say that an $\SS$\kf-map is \emph{co\kf-confined} if it is represented by the bottom arrow
of some independent square.\va1

To each $\SS$\kf-map $f \colon X \to Y$ is attached the symmetric graded $H\<$-module
\[
\biE* {f}{X}{Y}\set \D_\sX(\Hsch{\<X},\>f^!\>\Hsch{Y})=\oplus_{i\in\ZZ}\, \D_\sX^i(\Hsch{\<X},\>f^!\>\Hsch{Y}).
\] 
We will define graded homomorphisms between such modules---products, pushforwards via confined maps, and pullbacks via independent squares---and then verify in the  next section
that for these operations in the given setup, the axioms of a bivariant theory hold.\looseness=-1
\end{subcosa}

\begin{subcosa}\label{co-hom}
There result  homology groups, 
covariant for confined $\SS$\kern.5pt-maps,
\[
\HH_i(X)  
\set\D^{-i}_{\sX}(\Hsch{\<X},x^!\Hsch{S}) \qquad(i\in\ZZ)
\]
where $x\colon X\to S$ is the unique $\SS$\kf-map; and   cohomology groups, 
contravariant for co\kf-confined $\SS$\kern.5pt-maps,
\[
\HH^i(X) 
\set \D^i_{\sX}(\Hsch{\<X},\Hsch{\<X}\<),
\] 
see \cite[\S2.3]{fmc}. As in \S\ref{H-algebra}, 
$$
\HH^*\<(X)\set\oplus_{i\in\ZZ}\>\HH^i(X)
=\D_\sX\<(\Hsch{\<X},\Hsch{\<X})
$$ 
is a graded $H\<$-algebra. (We will actually focus on the opposite $H\<$-algebra.) Composition of $\D_\sX$-maps makes the symmetric graded $H\<$-module  
$$
\HH_*(X)\set\oplus_{i\in\ZZ}\>\HH_{-i}(X)
=\D_\sX\<(\Hsch{\<X}, x^!\Hsch{S}\<)
$$
into a graded right $\HH^*\<(X)$-module (=\:graded left module over the opposite algebra).\vspace{1pt}

By way of illustration, we will indicate in \S\ref{comparison} the relation  to the present formalism of some previously defined Hochschild homology and cohomology functors on schemes.
\end{subcosa}
\end{cosa}

\begin{cosa}\label{HH} We now begin the detailed description of a bivariant theory.

Fix a setup $(\SS,\;H,\dots)$ as in \ref{review setup}.
Our construction  assumes given: \vspace{2pt}

(i) For each  $X\in\SS$  an object $\Hsch{\<X}\in\D_\sX$.

(ii) For each $\SS$\kern.5pt-map $f\colon X\to Y$  a $\D_\sX$-morphism 
$$
f^\sharp\colon f^*\Hsch{Y}\to\Hsch{\<X},
$$
such that 

(iii) if $f$ is an identity map then so is $f^\sharp$, and

(iv) (transitivity)
for $\SS$\kern.5pt-maps $X\xto{f}Y\xto{g}Z$ the next diagram commutes:
 \begin{equation}\label{trans^sharp}
 \CD
  \bpic[xscale=2.9, yscale=1.8]

   \draw (0,-1) node (11){$(g\<f)^*\>\Hsch{Z}$};
   \draw (1,-1) node (13){$\Hsch{\<X\<}$};

   \draw (0,-2) node (21){$f^*\!g^*\>\Hsch{Z}$};
   \draw (1,-2) node (23){$f^*\>\Hsch{\<Y}$};

   \draw[->] (11)--(13) node[above, midway, scale=0.75]{$(gf)^\sharp$};
   
   \draw[-, double distance=2pt] (11)--(21) node[left=1pt, midway, scale=0.75]{$\ps^*$};
   \draw[->] (23)--(13) node[right=1pt, midway, scale=0.75]{$f^\sharp$};
   
   \draw[->] (21)--(23) node[below, midway, scale=0.75]{$f^*\<\<g^\sharp$};
 \epic
 \endCD
\end{equation}  

It is further assumed that \emph{if\/ $f\colon X\to Y$ is the bottom or top arrow of an independent square, then\/
$f^\sharp$ is an isomorphism.} \va1

The adjoint of the map $f^\sharp$ will be denoted $\fsh\colon\Hsch{Y}\to f^{}_{\<\<*}\Hsch{\<X}\>$.

\begin{sublem}\label{corcomposedsquare}

Let\/  $X\xto{f\>}Y\xto{g\>}Z$ be\/ $\SS$\kern.5pt-maps.
The~next diagram commutes.
  \[
   \begin{tikzpicture}[xscale=3.3, yscale=2.5]
    
      \node (11) at (1,-1) {$\Hsch{Z}$};
      \node (13) at (1,-2) {$ g^{}_*\<\Hsch{Y}$};
    
      \node (21) at (2,-1) {$ (gf)_*\Hsch{\<X}$};
      \node (23) at (2,-2) {$ g^{}_* f^{}_{\<\<*}\<\Hsch{\<X}$};
      
      \draw [->] (11) -- (13) node[left=1pt,  midway, scale=0.75]{$g^{}_\sharp$};
      \draw [->] (13) -- (23) node[below=1pt, midway, scale=0.75]{$ g^{}_*\fsh$};
      \draw [->] (11) -- (21) node[above, midway, scale=0.75]{$(gf)^{}_\sharp$};
      \draw [-, double distance=2pt]
                 (21) -- (23) node[right=1pt, midway, scale=0.75]{$ \ps_*$};
       \end{tikzpicture}
\]
\end{sublem}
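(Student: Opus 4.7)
Both arrows of the square are morphisms $\Hsch{Z}\to (gf)_*\Hsch{\<X}$, so by the adjunction $(gf)^*\<\<\dashv(gf)_*$ it is enough to compare their $(gf)^*\<\<\dashv (gf)_*$-adjoints, which are maps $(gf)^*\Hsch{Z}\to\Hsch{\<X}$. The adjoint of the top-right composite $(gf)_\sharp$ is, by the very definition of $(\cdot)_\sharp$, the map $(gf)^\sharp$; and by the transitivity hypothesis \eqref{trans^sharp} this factors as
\[
(gf)^\sharp\;=\;f^\sharp\smallcirc f^*\<\<g^\sharp\smallcirc \ps^*\colon (gf)^*\Hsch{Z}\to f^*\<g^*\Hsch{Z}\to f^*\Hsch{\<Y}\to\Hsch{\<X}.
\]

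For the left-bottom composite $\ps_*\smallcirc g_*\fsh\smallcirc g_\sharp$, the plan is to expand the two $\sharp$-subscripted arrows using the adjunction formulas $g_\sharp=g_* g^\sharp\smallcirc \eta_g^{\Hsch{Z}}$ and $\fsh=f_*f^\sharp\smallcirc \eta_f^{\Hsch{\<Y}}$, and then use naturality of $\eta_f$ at the morphism $g^\sharp\colon g^*\Hsch{Z}\to\Hsch{\<Y}$ to rewrite
\[
g_*\eta_f^{\Hsch{\<Y}}\smallcirc g_*g^\sharp\;=\;g_*f_*f^*\<g^\sharp\smallcirc g_*\eta_f^{g^*\Hsch{Z}}.
\]
The whole composite thus takes the form $\ps_*\smallcirc g_*f_*(f^\sharp\smallcirc f^*\<\<g^\sharp)\smallcirc g_*\eta_f^{g^*\Hsch{Z}}\smallcirc \eta_g^{\Hsch{Z}}$. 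Applying the pseudofunctoriality compatibility \eqref{adjpseudo} at the object $\Hsch{Z}$ identifies $g_*f_*\ps^*\smallcirc g_*\eta_f^{g^*\Hsch{Z}}\smallcirc\eta_g^{\Hsch{Z}}$ with $\ps_*\smallcirc \eta_{gf}^{\Hsch{Z}}$ (after pinning down directions of $\ps_*$ and $\ps^*$), and pulling $\ps_*$ across $(gf)_*(-)$ via its own naturality rewrites the composite as $(gf)_*\bigl(f^\sharp\smallcirc f^*\<\<g^\sharp\smallcirc \ps^*\bigr)\smallcirc \eta_{gf}^{\Hsch{Z}}$. Its $(gf)^*\<\<\dashv(gf)_*$-adjoint is precisely $f^\sharp\smallcirc f^*\<\<g^\sharp\smallcirc \ps^*$, matching the adjoint of the top-right composite.

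The only genuine bookkeeping hazard is tracking the direction of the pseudofunctorial isomorphisms $\ps_*$ and $\ps^*$, since the setup allows each symbol to denote either the isomorphism or its inverse; one must choose directions coherently so that \eqref{adjpseudo} is applied in the correct orientation. Apart from this, the argument is a routine diagram chase whose ingredients are only the triangle identities of $(gf)^*\<\<\dashv(gf)_*$, naturality of $\eta_f$ and of $\ps_*$, the transitivity \eqref{trans^sharp} of $(-)^\sharp$, and the adjunction-pseudofunctoriality compatibility \eqref{adjpseudo}. In particular, neither $(\cdot)^!$ nor the independent-square data $\bchadmirado{\Dd}$ enters the proof; the lemma is purely an adjoint restatement of the transitivity of the $\sharp$-family.
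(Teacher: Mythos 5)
Your proposal is correct and uses essentially the same ingredients as the paper's proof: the paper expands both composites via the units $\eta_{gf}$, $\eta_g$, $\eta_f$ and reduces commutativity to the transitivity diagram \eqref{trans^sharp} together with the unit/pseudofunctor compatibility \eqref{adjpseudo} (cited there to \cite[pp.\ 118--119]{li}) plus naturality, exactly as you do. Your framing via comparison of $(gf)^*\dashv(gf)_*$-adjoints is only a repackaging of the paper's direct diagram chase, and your caution about the orientation of $\ps_*$ and $\ps^*$ is well placed but does not change the argument.
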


\begin{proof}
The diagram  expands as follows:\vspace{-2pt}
\[
  \bpic[xscale=3.5, yscale=2.8]

   \draw (0,-1) node (11)[scale=1]{$\Hsch{Z}$};
   \draw (1,-1) node (12)[scale=1]{$(gf)_*(gf)^*\>\Hsch{Z}$};
   \draw (2,-1) node (13)[scale=1]{$$};
   \draw (3,-1) node (14)[scale=1]{$(gf)_{\<*}\Hsch{\sX}$};

   \draw (0,-2) node (21)[scale=1]{$$};
   \draw (1,-2) node (22)[scale=1]{$(gf)_*f^*\<g^*\>\Hsch{Z}$};
   \draw (1.95,-2) node (23)[scale=1]{$(gf)_{\<*}f^*\>\Hsch{Y}$};
   \draw (3,-2) node (24)[scale=1]{$(gf)_{\<*}\Hsch{\sX}$};
   
   \draw (0,-3) node (31){$g_*g^*\>\Hsch{Z}$};
   \draw (1,-3) node (32)[scale=1]{$g_*f_*f^*\<g^*\>\Hsch{Z}$};
   \draw (2,-3) node (33){$$};
   \draw (3,-3) node (34){$$};

   \draw (0,-4) node (41)[scale=1]{$g_*\Hsch{Y}$};
   \draw (1,-4) node (42){$$};
   \draw (1.95,-4) node (43)[scale=1]{$g_*f_*f^*\>\Hsch{Y}$};
   \draw (3,-4) node (44)[scale=1]{$g_*f_*\Hsch{\sX}$};
   
   \draw[->] (11)--(12) node[above, midway, scale=0.75]{$\eta^{}_{gf}$};
   \draw[->] (12)--(14) node[above, midway, scale=0.75]{$\!\!(gf)_* (gf)^\sharp \quad$};
   
   \draw[->] (11)--(31) node[left, midway, scale=0.75]{$\eta^{}_{g}$};
   \draw[-, double distance=2pt] (12)--(22) node[left=1pt, midway, scale=0.75]{$\via \ps^*$};
   \draw[-, double distance=2pt] (14)--(24);
   
   \draw[->] (22)--(23) node[above, midway, scale=0.75]{$\!\via g^\sharp$};
   \draw[->] (23)--(24) node[above, midway, scale=0.75]{$\!(gf)_{\<*}f^\sharp$};
   
   \draw[-, double distance=2pt] (22)--(32) node[left=1pt, midway, scale=0.75]{$\ps_*$};
   \draw[-, double distance=2pt] (23)--(43) node[right=1pt, midway, scale=0.75]{$\ps_*$};
   \draw[-, double distance=2pt] (24)--(44) node[right=1pt, midway, scale=0.75]{$\ps_*$};
      
    \draw[->] (31)--(32) node[below=1pt, midway, scale=0.75]{$g_*\eta^{}_{\<f}$};
    \draw[->] (31)--(41) node[left=1pt, midway, scale=0.75]{$g_*g^\sharp$};
    \draw[->] (32)--(43) node[left=12pt, below=-2pt, midway, scale=0.75]{$\via g^\sharp$};
   
    \draw[->] (41)--(43) node[below=1pt, midway, scale=0.75]{$g_*\eta^{}_{\<f}$};
    \draw[->] (43)--(44) node[below=1pt, midway, scale=0.75]{$g_*f_*f^\sharp$};
    
      \node (1) at (intersection of 11--32 and 31--12) {\kern-25pt\circled1};
      \node (2) at (1.95,-1.5){\circled2};  
  \epic
\]
\vskip-3pt
Commutativity of subdiagram \circled1 is shown in \cite[pp.\:118--119]{li}; of \circled2 is given by ~\eqref{trans^sharp}; and  of the remaining subdiagrams is obvious.
\end{proof}
\end{cosa}

\begin{cosa}\label{data}
Associate to any $\SS$\kern.5pt-map $f \colon X \to Y$  the symmetric graded $H\<$-module
\begin{equation}\label{HH*}
\biE* {f}{X}{Y}\set \D_\sX(\Hsch{\<X},\>f^!\>\Hsch{Y})=\oplus_{i\in\ZZ}\, \D_\sX^i(\Hsch{\<X},\>f^!\>\Hsch{Y}).
\end{equation}

There are three basic bivariant operations on these $H\<$-modules, as follows.

\pagebreak[3]
\begin{subcosa}
    {\bf Product.}
    \label{Product}
Let $f:X\to Y$ and $g:Y\to Z$ be maps in $\SS$.\va1 

For $i,j\in\ZZ$ and $\alpha \in \biE{\>i}{f}{X}{Y}$, $\beta \in \biE{\>j}{g\>}{Y}{Z},$ let the \emph{product}
 \[
    \alpha \<\cdot\<\< \beta \in \biE{i+j}{g f}{X}{Z}
  \]
be $(-1)^{ij}$ times the  composite map
\[
\Hsch{\<X}\xto{\,\,\alpha\,\,}f^!\>\Hsch{Y}\xto{\!f^!\beta\>}f^!g^!\>\Hsch{Z}
\overset{\ps^!}{=\!=}(g f)^!\>\Hsch{Z}.
\]
Since composition $\SS$ is $H\<$-bilinear, since $f^!$ is a graded functor and since 
$\ps^!(\Hsch{Z})$ has 
degree 0, therefore this product gives a graded $H\<$-bilinear map
\[
\HH^*(X\xto{\,f\,\>}Y)\times \HH^*(Y\xto{\,g\,\,}Z)\lto \HH^*(X\xto{gf}Z).
  \]
 
 For the case when $X=Y$ and $f={}$identity,  the identity map of $\Hsch{\<X}$ is a left unit for the product. Similarly when  $Y=Z$ and $g={}$identity, the identity map of $\>\Hsch{\<Z}$ is a
right unit.  

\end{subcosa}

\begin{subcosa}
   {\bf Pushforward.}
   \label{Pushforward}
Let $f \colon X \to Y$ and $g \colon Y \to Z$ be maps in $\SS$, with $f$~confined. 
The \emph{pushforward  by} {$f$}
\[
\pf{f}\colon \biE* {g f}{X}{Z} \to \biE* {\>\>g\,}{Y}{Z}
\]
is the graded $H\<$-linear map such that for $i\in\ZZ$ and $\alpha \in \biE{i}{g f}{X}{Z}$, the image
$\pf{f}\alpha \in \biE{i}{\>\>g\,}{Y}{Z}$
is the natural composition
\[
  \bpic[xscale=2.5, yscale=1]
   \draw (-.07,-1) node (11){$\Hsch{Y}$};
   \draw (0.55,-1) node (12){$f^{}_{\<\<*}\Hsch{\<X}$};
   \draw (1.4,-1) node (13){$ f^{}_{\<\<*}(gf)^!\>\Hsch{Z}$};
   \draw (2.38,-1) node (14){$f^{}_{\<\<*} f^!g^!\>\Hsch{Z}$};
   \draw (3.26,-1) node (15){$g^!\>\Hsch{Z}.$};
  
   \draw [->] (11) -- (12) node[above=0.5mm, midway, scale=0.75]{$\fsh$};
   \draw [->] (12) -- (13) node[above=0.5mm, midway, scale=0.75]{$ f^{}_{\<\<*}\alpha\ $};
   \draw [-, double distance=2pt] (13) -- (14) node[above=0.5mm,midway,scale=0.75]{$\fst\<\<\ps^!$};
   \draw [->] (14) -- (15) node[above=0.5mm, midway, scale=0.75]{$\couni{\!\<\<f}\;$};
   
  \epic
\]
In other words, $\pf{f}\alpha$ is the composition
\begin{equation*}
\CD
   \begin{tikzpicture}
      \draw[white] (0cm,0.5cm) -- +(0: .75\linewidth)
      node (11) [black, pos = 0.33] {$\Hsch{Y}$}
      node (12) [black, pos = 0.5] {$ f^{}_{\<\<*}\Hsch{\<X}$}
      node (15) [black, pos = 0.688] {$ g^!\>\Hsch{Z}$};
      \draw [->] (11) -- (12) node[auto, midway, scale=0.75]{$\fsh$};
      \draw [->] (12) -- (15) node[above=0.25mm, midway, scale=0.75]{$\widetilde{\alpha}$};
   \end{tikzpicture}
\endCD
\end{equation*}
where $\widetilde{\alpha} \colon  f^{}_{\<\<*}\Hsch{\<X} \to g^!\>\Hsch{Z}$ is the map obtained by adjunction from 
\[
   \begin{tikzpicture}
      \draw[white] (0cm,0.5cm) -- +(0: \linewidth)
      node (11) [black, pos = 0.28] {$\Hsch{\<X}$}
      node (12) [black, pos = 0.4285] {$(gf)^!\>\Hsch{Z}$}
      node (15) [black, pos = 0.605] {$f^!g^!\>\Hsch{Z}.$};
      \draw [->] (11) -- (12) node[above=0.25mm, midway, scale=0.75]{$\alpha$};
      \draw [-, double distance=2pt] (12) -- (15)
                              node[above=0.25mm, midway, scale=0.75]{$\ps^!$};
   \end{tikzpicture}
\]

\end{subcosa}

\begin{subcosa}
  {\bf Pullback.}
  \label{Pullback}
Let $\Dd$ be an independent square in $\SS$
  \[
   \begin{tikzpicture}[yscale=.95]
      \draw[white] (0cm,0.5cm) -- +(0: \linewidth)
      node (21) [black, pos = 0.41] {$Y'$}
      node (22) [black, pos = 0.59] {$Y$};
      \draw[white] (0cm,2.65cm) -- +(0: \linewidth)
      node (11) [black, pos = 0.41] {$X'$}
      node (12) [black, pos = 0.59] {$X$};
      \node (C) at (intersection of 11--22 and 12--21) [scale=0.9] {$\Dd$};
      \draw [->] (11) -- (12) node[above, midway, sloped, scale=0.75]{$g'$};
      \draw [->] (21) -- (22) node[below, midway, sloped, scale=0.75]{$g$};
      \draw [->] (11) -- (21) node[left, midway, scale=0.75]{$f'$};
      \draw [->] (12) -- (22) node[right, midway, scale=0.75]{$f$};
   \end{tikzpicture}
  \]
The maps $g^\sharp\colon  {g}^* \Hsch{Y}\to \Hsch{Y'}$ and $g'{}^\sharp\colon g'{}^* \Hsch{\<X}\to \Hsch{X'}$ are isomorphisms (\S\ref{HH}). 

\pagebreak[3]
The \emph{ pullback by $g,$ through $\Dd,$}
\[
\pb{g}\colon \biE* {f}{X}{Y}\lto \biE* {f'}{X'}{Y'}
\]
is the graded $H\<$-linear map such that for $i\in\ZZ$ and $\alpha \in \biE{i}{ f}{X}{Y}$, 
the image $\pb{g}\alpha \in \biE{i}{f'}{X'}{Y'}$
 is the natural composition
 \[
  \bpic[xscale=2.5, yscale=1]
   \draw (-.03,-1) node (11){$\Hsch{X'}$};
   \draw (0.75,-1) node (12){$g'{}^* \Hsch{\<X}$};
   \draw (1.66,-1) node (13){$g'{}^*\!f^! \Hsch{Y}$};
   \draw (2.64,-1) node (14){$f'{}^! {g}^*\Hsch{Y}\>$};
   \draw (3.61 ,-1) node (15){$f'{}^! \Hsch{Y'}\>.$};
  
   \draw [->] (11) -- (12) node[above=0.5mm, midway, scale=0.75]{$\!(g'{}^\sharp){}^{\<-1}$};
   \draw [->] (12) -- (13) node[above=0.5mm, midway, scale=0.75]{$g'^*\alpha\ $};
   \draw [->] (13) -- (14) node[above=0.5mm,midway,scale=0.75]{$\bchadmirado{\Dd}$};
   \draw [->] (14) -- (15) node[above=0.5mm, midway, scale=0.75]{$\<\<f'{}^!\< \big(g^\sharp\big)$};
   
  \epic
\]

For $X=Y\<$, $X'=Y'\<$,  $f$ and $f'$ identity maps,   pullback takes the identity map of $\Hsch{\<X}$ to
that of $\Hsch{\<X'}$. 

Thus identity maps are \emph{units} in the
sense of \cite[p.\,22]{fmc}. \vspace{2pt}

\end{subcosa}
\end{cosa}

\begin{thm}\label{3.4}
The data in sections~\textup{\ref{HH}--\ref{data}} constitute a bivariant theory,
with units, on\/ $\SS,$ taking values in symmetric graded $H\<\<$-modules.
\end{thm}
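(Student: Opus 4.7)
My plan is to verify, one at a time, the seven bivariant axioms of \cite{fmc}---$(A_1)$ associativity of product, $(A_2)$ functoriality of pushforward, $(A_3)$ functoriality of pullback, $(A_{12})$ compatibility of product with pushforward, $(A_{13})$ compatibility of product with pullback, $(A_{23})$ compatibility of pushforward with pullback, and $(A_{123})$ the combined projection formula---together with the unit conditions. Each axiom unfolds as a commutative-diagram assertion in the $H$-graded categories $\D_W$, and I would establish each by a diagram chase, invoking the coherence data of the setup at the crucial step.

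First I would dispose of the three ``pure'' axioms. Associativity $(A_1)$ will follow from the pseudofunctoriality of $(-)^!$ (diagram \eqref{assoc}) together with graded $H$-bilinearity of composition; the sign factor $(-1)^{ij}$ built into \ref{Product} is precisely what makes the signs cancel, as a short computation with degrees $i,j,k$ shows. $(A_2)$ reduces to the transitivity of the trace $\couni{}$, namely diagram \eqref{transitivity}, combined with Sublem \ref{corcomposedsquare} (which is exactly the corresponding statement for $\fsh$) and pseudofunctoriality of $(-)_*$. $(A_3)$ would be obtained by chasing the horizontal-transitivity diagram \eqref{basechange1} for $\bchadmirado{}$ together with pseudofunctoriality of $(-)^*$ and the transitivity \eqref{trans^sharp} of $(-)^\sharp$.

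The bulk of the work lies in the mixed axioms. $(A_{12})$ asks, for composable maps $X\xto{f}Y\xto{g}Z\xto{h}W$ with $f$ confined, and classes $\alpha\in\HH^*(gf)$, $\beta\in\HH^*(h)$, that $\pf{f}(\alpha\cdot\beta)=(\pf{f}\alpha)\cdot\beta$; unwinding both sides, this becomes a naturality statement for $\couni{\<\<\!f}$ combined with pseudofunctoriality of $(-)^!$. For $(A_{13})$, the compatibility of product with pullback through an independent square, the key inputs are the horizontal transitivity \eqref{basechange1} of $\bchadmirado{}$ and the compatibility of $g^\sharp$ with composition via \eqref{trans^sharp}. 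Axiom $(A_{23})$ is the delicate one: it forces base change via an independent square to interact coherently with the trace of a confined map, and this coherence is precisely what the postulated diagrams \eqref{first diagram} and \eqref{second diagram} of \S\ref{comm diag} were designed to supply. The combined formula $(A_{123})$ then drops out from $(A_{12})$, $(A_{13})$ and $(A_{23})$, by a further appeal to \eqref{first diagram}.

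The unit axioms are essentially immediate from (iii) of \ref{HH} and the normalization that all pseudofunctors act trivially on identity maps: the identity element of $\HH^0(\id_X)$ is then a two-sided unit for products and is preserved by all push/pullback operations involving identity constituents. The main obstacle will be $(A_{23})$, since the other axioms are formal consequences of pseudofunctoriality and transitivity, whereas $(A_{23})$ genuinely depends on the nontrivial compatibility postulates between $\bchadmirado{}$ and $\couni{}$. Even there, once those diagrams are in hand, the verification reduces to careful but routine bookkeeping of naturalities and reassociations.
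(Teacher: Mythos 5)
Your overall strategy---an axiom-by-axiom verification of $(A_1)$, $(A_2)$, $(A_3)$, $(A_{12})$, $(A_{13})$, $(A_{23})$, $(A_{123})$ and the unit conditions---is exactly the paper's, and your assignment of inputs to $(A_1)$, $(A_2)$, $(A_3)$ and $(A_{12})$ matches the actual proofs. But there is a genuine gap at $(A_{123})$. The projection formula is an independent axiom in the Fulton--MacPherson framework and does \emph{not} ``drop out'' of $(A_{12})$, $(A_{13})$ and $(A_{23})$: in $\pf{g'}(\pb{g}\alpha\cdot\beta)$ the pushforward is along $g'\colon X'\to X$, whereas the product $\pb{g}\alpha\cdot\beta$ is formed along the factorization $X'\xto{f'}Y'\xto{hg}Z$, whose first leg is $f'$, not $g'$; so $(A_{12})$ (the only axiom that moves a pushforward past a product) simply does not apply, and no combination of the other axioms rewrites the left-hand side. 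The paper must, and does, verify $(A_{123})$ directly via the large diagram \eqref{forA123}; the non-formal content there is the adjointness of $g'{}^\sharp$ and $g'_\sharp$ (resp.\ $g^\sharp$ and $g_\sharp$) for the two subdiagrams involving $\eta_{g'}$ and $\couniasterisco{g}$, plus the reduction of the remaining subdiagram to \eqref{new3}, which is an instance of the postulated compatibility \eqref{second diagram} between $\bchadmirado{\Dd}$ and the traces---not of \eqref{first diagram} as you propose. Your plan as written would stall here.

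There are also two misrouted coherence inputs that you would discover when expanding the diagrams. In $(A_{13})$ the two independent squares are composed \emph{vertically}, so the relevant transitivity is \eqref{basechange2}, not the horizontal transitivity \eqref{basechange1} (that one is what $(A_3)$ consumes), and \eqref{trans^sharp} is not needed there beyond naturality. Conversely, \eqref{first diagram} is what $(A_{23})$ uses (together with \eqref{basechange2} and an adjoint computation for the top square that does invoke \eqref{trans^sharp} and the definition of $\theta_{\Da}$), while \eqref{second diagram} is reserved for $(A_{123})$, not $(A_{23})$. Those two slips are repairable by re-routing, but the claim that $(A_{123})$ is a formal consequence of the other axioms is a real missing step, not bookkeeping.
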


The proof of Theorem~\ref{3.4}---that is, the verification of the bivariance axioms---is given in \S\ref{axioms}.

In the rest of this section, we discuss some examples, and their associated bivariant
homology-cohomology pairs.

\begin{exs}\label{ex-setup}
In \S\ref{realization} we will show in detail  that there is a setup in which $\SS$ is a category of essentially-finite\kf-type perfect (i.e.,  finite tor-dimension) separated maps of noetherian schemes, closed under fiber product and having a final object $S$,  with proper maps as confined maps, and oriented fiber squares with flat bottom arrow as independent squares; and in which $H\set\oplus_{i\ge0}\>\>H^i(S,\OS)$ with its natural commutative\kf-graded ring structure.  Moreover, for each $X\in\SS$, $\D_\sX$ is  the full subcategory $\Dqc(X)$ of
the derived category $\D(X)$---enriched in the standard way with an $H\<$-graded structure---such that  an $\OX$-complex~$C$ is an object of $\Dqc(X)$ if and only
if all the homology sheaves of $C$ are quasi-coherent; and for any $S$-map $f\colon X\to Y\<$, $f^*$ is the graded enrichment of the derived
inverse\kf-image functor (usually denoted $\Lf$).  

The following examples refer to such a setup.\va2

\goodbreak
(a) Fix an object $\Hsch{S}\in\D_S$.\va{1.3}  For each $X\in\SS$, with its unique $\SS$-map $x\colon X\to S$,
set  $\Hsch{\<X}\set x^*\Hsch{S}\<$. For an $\SS$\kf-map $f$, let $f^\sharp$ be 
\smash{$f^*\<x^*\>\Hsch{S}\overset{\ps^*}{=\!\!\!=}(xf)^*\>\Hsch{S}\>$.}\va1

(b) For each $X\in\SS$
let  $\Hsch{\<X}$ be the Hochschild complex $\mathbb H_{X/S}$, and $f^\sharp$ as explained in the proof of \cite[Theorem 1.3]{BF}.\va{1.5}

(c) For each $X\in\SS$, let $\Hsch{\<X}$ be the cotangent complex $L_{X/S}\>$, and $f^\sharp$ the map given by \mbox{\cite[p.\,132, (1.2.7.2)]{Il}} (with $Y=Y'\set S$).\va{.7}

Examples (b) and (c) are not unrelated---see \cite[Theorem 3.1.3]{BF2}.\va{1.5} 

 (d) There are many  ways to get new
families satisfying \ref{HH}(i)--(iv) from old ones.  For example,\va1 to two such families 
$(\Hsch{\<X,1\>}, f_1^\sharp\>)$ and $(\Hsch{\<X,2\>}, f_2^\sharp\>)$, apply the 
derived tensor product functor,  or the direct sum functor, or \dots\va{1.5}

\end{exs}

\begin{subcosa}\label{indepndt}
In examples~\ref{ex-setup}(b) and (c), if an $\SS$-map $f\colon X\to Y$ is essentially \'etale (see \S\ref{shriek} below) then $f^*\Hsch{Y}\to\Hsch{\<X}$ is an isomorphism.
(The assertion for Example~ (b) will be treated in a sequel to this paper. Example (c) is covered by 
\cite[p.\,135, 2.1.2.1 and~p.\,203, 3.1.1]{Il}.)
So for these examples, one needs, according to \S\ref{HH},  to restrict the class of independent squares to those fiber squares whose bottom (hence top) arrow is essentially \'etale.

No such restriction is needed in Example~(a).
\end{subcosa}

\begin{cosa} \label{ho-co}

The bivariant  theory provides symmetric graded $H\<$-modules 
\[
\HH^*\<(X)\set \HH^*(X\xto{\id}X)=\D_\sX(\Hsch{\<X},\Hsch{\<X})=\oplus_{i\in\ZZ}\,\D_\sX^i(\Hsch{\<X},\Hsch{\<X})
\]
(\emph{bivariant cohomology}), and, with $x\colon X\to S$ the unique $\SS$-map,
\[
\HH_*(X)\set\HH^*(X\xto{x} S) = \D_\sX(\Hsch{\<X},x^!\Hsch{S})=\oplus_{i\in\ZZ}\,\D_\sX^{-i}(\Hsch{\<X},x^!\Hsch{S})
\]
(\emph{bivariant homology}).
 
 For instance, if, in~\ref{ex-setup}(a), $\Hsch{S}=\OS$, then bivariant cohomology is just
 $$
\HH^i(X) =\textup H^i(X\<, \OX);
$$
and homology is the (hyper)cohomology of the relative dualizing complex:
$$
\HH_i(X)=\textup H^{-i}(X, x^!\OX).
$$

For the bivariant Hochschild theory of example~\ref{ex-setup}(b), the corresponding (co)homology is discussed---at least for flat maps---in \S\ref{comparison}.

\smallskip
Functoriality, basic  properties of, and operations between, $\HH^*$ and $\HH_*$ result from the structure of a bivariant theory, and correspond to the usual structure of a theory of cohomology and homology, as follows.\vspace{1pt}

The cup product
\[
\smile \colon \HH^i(X) \otimes \HH^j(X) \lto \HH^{i+j}(X)
\]
is  the product \ref{Product} associated to the
composition $X \xto{\id} X \xto{\id} X$:
for $\alpha\in\D_\sX^i(\Hsch{\<X},\Hsch{\<X})$, 
$\beta\in\D_\sX^j(\Hsch{\<X},\Hsch{\<X})$,
$$\alpha\!\smile\!\beta\set
(-1)^{ij}\beta\ssscirc\alpha\in\D_\sX^{i+j}(\Hsch{\<X},\Hsch{\<X}).
$$
Cup product makes $\HH^*\<(X)$ into a graded ring---\emph{opposite} to $\D_\sX(\Hsch{\<X},\Hsch{\<X})$ with its composition product. Both rings have the same graded center, and so 
$\HH^*\<(X)$ is a graded $H$-algebra.

As in \S\ref{graded center}, both $\HH^*\<(X)$ and $\HH_*(X)$ are actually symmetric graded modules
over the graded center $\CC_\sX$ of $\D_\sX$. In fact, since $\CC_\sX$ is graded-commutative,
the evaluation map \eqref{evaluation} with $A=\Hsch\sX$ sends $\CC_\sX$ to the graded center
of $\HH^*\<(X)$, so that $\HH^*\<(X)$ \emph{is a graded\/ $\CC_{\<X}$-algebra.}\va1

Recall that an $\SS$\kf-map is \emph{co\kf-confined} if it is represented by the bottom arrow
of some independent square.\va1

It results from Proposition~\ref{A13} below (with $X=Y=Z$ and $f=g=1$) that for every co\kf-confined map $f \colon X' \to X$, the graded $H\<$-linear pullback
\[
\postdisplaypenalty 10000
\pb{f} \colon \HH^*\<(X) \lto \HH^*(X')
\]
of~\ref{Pullback} is a ring homomorphism.

Thus:
\begin{subprop}
With\/ $\SS_{\co}$ the subcategory of all co\kf-confined maps in~$\SS,$  bivariant
cohomology  with the cup product gives a \emph{contravariant} functor 
\[
\HH^* \colon \SS_{\co} \to \{\textup{$H\<$-algebras}\}.
\]
\end{subprop}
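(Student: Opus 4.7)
The plan is to check, once the foregoing discussion is granted, that (a) $\SS_{\co}$ is a subcategory of $\SS$; (b) for each co-confined $f$, the pullback $\pb{f}$ is a graded $H$-algebra homomorphism; and (c) the assignment $f\mapsto\pb{f}$ is contravariantly functorial. That each $\HH^*\<(X)$ is a graded $H$-algebra has already been shown via the cup product.

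For (a), every identity map is co-confined, using the trivial independent square supplied by the axioms recalled in \S\ref{the category}; and co-confined maps are closed under composition because horizontal composites of independent squares are independent, by axiom (A1) of \cite{fmc}. For (b), graded $H$-linearity of $\pb{f}$ is built into its definition; preservation of $\id_{\Hsch{\<X}}$ is recorded at the end of \S\ref{Pullback}; and the multiplicative property $\pb{f}(\alpha\smile\beta)=\pb{f}\alpha\smile\pb{f}\beta$ is Proposition~\ref{A13} specialized to $X=Y=Z$ and $f=g=\id$, where the general pullback-product formula collapses---no nontrivial signs intervening---to exactly the ring-homomorphism relation.

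For (c), $\pb{\id}=\id$ follows from the postulate in \S\ref{bchado} that $\bchadmirado{\Dd}$ is the identity transformation when both vertical arrows of $\Dd$ are identities, combined with $\id^\sharp=\id$ from \S\ref{HH}(iii). The composition law $\pb{(g_1g_2)}=\pb{g_2}\ssscirc\pb{g_1}$ is the main technical step. The critical simplification is that for pullbacks into $\HH^*\<(X)=\HH^*\<(\id_X)$ one may take the independent squares so that the right vertical arrow is $\id$; then $\bchadmirado{\Dd}$ is again the identity and $\pb{g}$ reduces to the conjugation
\[
\pb{g}(\alpha)\,=\,g^\sharp\ssscirc g^*\<\!\alpha\ssscirc(g^\sharp)^{-1}.
\]
The composition law then unwinds using the transitivity identity \eqref{trans^sharp} for the $\sharp$-morphisms, which expresses $(g_1g_2)^\sharp$ in terms of $g_2^\sharp$, $g_2^*(g_1^\sharp)$ and $\ps^*$, together with the naturality of $\ps^*$ as a functorial isomorphism $g_2^*g_1^*\iso(g_1g_2)^*$.

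The main obstacle is the verification of the composition law in (c), though the specialization to identity right-vertical arrows tames it considerably. Overall, no new conceptual input is needed beyond Theorem~\ref{3.4}, Proposition~\ref{A13}, and the transitivity identity \eqref{trans^sharp}; this subproposition is essentially a consolidation of consequences of the bivariant framework already set up.
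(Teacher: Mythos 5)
Your proposal is correct, and on the decisive point it coincides with the paper's own argument: multiplicativity of $\pb{f}$ is obtained exactly as in the text, by specializing Proposition~\ref{A13} to $X=Y=Z$ and $f=g=\id$, and the unit and $H$-linearity statements are read off from \S\ref{Pullback} as you say. The only place you diverge is the composition law $\pb{(gh)}=\pb{h}\ssscirc\pb{g}$: the paper's intended route is simply to invoke the bivariance axiom $(A_3)$ (Proposition~\ref{A3}), whereas you re-derive the needed special case from the conjugation formula $\pb{g}(\alpha)=g^\sharp\ssscirc g^*\alpha\ssscirc(g^\sharp)^{-1}$ together with \eqref{trans^sharp} and naturality of $\ps^*$. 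Your computation is a correct instance of the proof of Proposition~\ref{A3}, in which the base-change ingredient \eqref{basechange1} degenerates because $\bchadmirado{}$ is the identity on squares with identity vertical arrows; so you gain a self-contained two-line verification at the cost of reproving an axiom already established. One caveat, which applies equally to the paper and which you pass over with ``one may take the independent squares so that the right vertical arrow is $\id$'': the definition of co-confined only guarantees that $g$ is the bottom arrow of \emph{some} independent square, not that the particular square with identity vertical arrows is independent, nor that two such squares for $g$ and $h$ compose; this is harmless in all the examples of \ref{ex-setup}, where independence depends only on the bottom arrow of a fiber square, but it is an implicit hypothesis in the abstract setup.
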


\pagebreak[3]
\smallskip
For $x \colon X \to S$ in  $\SS$, the cap product
\[\frown \colon \HH^i(X) \otimes \HH_j(X) \lto \HH_{j-i}(X)
\]
is defined to be the product  \ref{Product} associated to the composition $X \xto{\id\>\>} X \xto{x\>\>} S$: for 
$\alpha\in\D_\sX^i(\Hsch{\<X},\Hsch{\<X})$, 
$\beta\in\D_\sX^{-j}(\Hsch{\<X},x^!\OS)$,
$$
\alpha\!\frown\!\beta\set
(-1)^{ij}\beta\ssscirc\alpha\in\D_\sX^{i-j}(\Hsch{\<X},x^!\OS).
$$
With this product, $\HH_*(X)$ \emph{is a graded left\/ $\HH^*\<(X)$-module}.

\pagebreak[3]

Associated to a confined $\SS$-map  $f\colon X' \to X$ 
one has the $H\<$-linear pushforward of~\ref{Pushforward}\kern.5pt:
\[\pf{f} \colon \HH_*(X') \lto \HH_*(X).
\]
Thus:

\begin{subprop}
With $\SS_{\cf}$  the subcategory of all confined maps in~$\SS,$
bivariant\- homology together with the cap product, gives a \emph{covariant} functor 
\[\HH_* \colon \SS_{\cf} \lto \textup{\{symmetric graded $H\<$-modules\}}.
\]
Moreover, for every $X \in \SS$, $\HH_*(X)$ is a graded left\/ $\HH^*\<(X)$-module.
\end{subprop}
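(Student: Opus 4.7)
The plan is to deduce both assertions as formal corollaries of Theorem~\ref{3.4} and the general axioms of bivariant theories \cite[\S2]{fmc}, with no new diagram chases needed beyond those folded into~\ref{3.4}.

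For the covariant functoriality $\HH_*\colon\SS_{\cf}\to\{\text{symmetric graded $H\<$-modules}\}$, I would verify two points. First, $\pf{\id}$ equals the identity: inspecting the definition in Section~\ref{Pushforward}, when $f=\id$ one has $f^\sharp=\id$ by condition~(iii) of Section~\ref{HH}, the pseudofunctorial structure maps $\ps^*\<\<,\ps^!,\ps_*$ are identities by the normalizations in Sections~\ref{f^* and f^!} and~\ref{f_*}, and $\couni{\id}$ is the identity by the last sentence of Section~\ref{f_*}. Second, $\pf{g\<f}=\pf{g}\ssscirc\pf{\<f}$ for composable confined maps $X'\xto{f}X\xto{g}Y$: this is axiom~(A2) of a bivariant theory, which is part of the content of Theorem~\ref{3.4}. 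Graded $H\<$-linearity of each $\pf{\<f}$ is already built into its definition.

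For the graded left $\HH^*\<(X)$-module structure on $\HH_*(X)$, I would apply associativity of the bivariant product (axiom~(A1), again part of Theorem~\ref{3.4}) to the triple of $\SS$-maps $X\xto{\id}X\xto{\id}X\xto{x}S$. For $\alpha,\beta\in\HH^*\<(X)$ and $\gamma\in\HH_*(X)$ this yields $(\alpha\cdot\beta)\cdot\gamma=\alpha\cdot(\beta\cdot\gamma)$, which upon unfolding the definitions of Section~\ref{ho-co} translates exactly into
\[
(\alpha\smile\beta)\frown\gamma=\alpha\frown(\beta\frown\gamma).
\]
The unit axiom $\id_{\Hsch{\<X}}\frown\gamma=\gamma$ follows from the left-unit property for the product noted at the end of Section~\ref{Product}, and graded $H\<$-bilinearity of $\frown$ is inherited from that of the bivariant product.

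The main obstacle I anticipate is sign-bookkeeping: one must verify that the sign $(-1)^{ij}$ appearing in the definitions of $\smile$ and $\frown$, where $\alpha\in\HH^i\<(X)$ and $\gamma\in\HH_j(X)=\D_\sX^{-j}(\Hsch{\<X},x^!\Hsch{S})$, matches the sign $(-1)^{i(-j)}=(-1)^{ij}$ supplied by the bivariant product in Section~\ref{Product} when applied to an element of degree $i$ against one of degree $-j$. Once that identification is recorded, both $\smile$ and $\frown$ are literal instances of the bivariant product, the module axioms follow from the bivariant axioms with no residual sign discrepancy, and the subproposition reduces to a formal extraction from Theorem~\ref{3.4}.
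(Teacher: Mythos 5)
Your proposal is correct and follows exactly the route the paper intends: the subproposition is stated as an immediate formal consequence of Theorem~\ref{3.4} (axioms $A_1$, $A_2$, the unit properties, and the built-in graded $H$-linearity of the operations), and your verification that $\pf{\id}=\id$ and that the sign $(-1)^{ij}$ in $\smile$ and $\frown$ agrees with $(-1)^{i(-j)}$ from the bivariant product are precisely the details the paper leaves to the reader.
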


Proposition~\ref{A123} (with $Z=S$, $f=\id_X$, $f'=\id_{X'}$) yields:
\begin{subprop}
If\/ $g\colon X'\to X$ is both confined and co-confined, then for all $\alpha\in\HH^*(X)$ and $\beta\in\HH_*(X'),$
\[
g_\star(g^\star\alpha\<\frown\<\<\beta)=\alpha\<\frown\< g_\star\beta.
\]

\end{subprop}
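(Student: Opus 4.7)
The plan is to obtain this as a direct specialization of Proposition~\ref{A123}, the combined projection-formula axiom for the bivariant theory under construction. First, I unwind the cap products on both sides: by the definition of $\frown$ in \S\ref{ho-co}, for $\alpha\in\HH^i(X)$ and $\beta\in\HH_j(X)$ one has $\alpha\frown\beta=(-1)^{ij}\beta\ssscirc\alpha$, which is precisely the bivariant product $\alpha\cdot\beta$ of \S\ref{Product} associated to the composition $X\xto{\id}X\xto{x}S$, viewing $\alpha\in\HH^*(X\xto{\id}X)$ and $\beta\in\HH^*(X\xto{x}S)$. Hence the claim is equivalent to the bivariant-product identity
\[
g_\star\bigl(g^\star\alpha\cdot\beta\bigr)\;=\;\alpha\cdot g_\star\beta
\]
in $\HH^*(X\xto{x}S)=\HH_*(X)$.

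Second, I specialize Proposition~\ref{A123} to the two-storey diagram
\[
\CD
X' @>g>> X \\
@| @| \\
X' @>g>> X \\
@V x' VV @VV x V \\
S @= S
\endCD
\]
with $Z=S$, $f=\id_X$, $f'=\id_{X'}$. Confinedness of $g$ is needed both to form $g_\star$ and to apply the pushforward part of A123; co-confinedness provides the independent square with $g$ along its bottom through which $g^\star$ is taken. With these identifications, the general statement of A123 degenerates exactly to the displayed identity above, because the ``intermediate'' operations $f_\star=\id$, $f'_\star=\id$ disappear and the pseudofunctorial isomorphisms $\ps^!,\ps_*,\ps^*$ involving the identity factors collapse to identity transformations by our normalization of pseudofunctors in \S\ref{f^* and f^!}.

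Third, because $g^\star$ and $g_\star$ are graded $H$-linear of degree $0$, the sign $(-1)^{ij}$ built into $\frown$ appears with the same exponent on both sides of the bivariant-product identity; it factors out, and reinstating the $\frown$-notation produces the desired equality.

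The only real obstacle is the proof of Proposition~\ref{A123} itself, to be carried out in \S\ref{axioms}. That argument combines the transitivity of the trace map $\couni{}$ from~\eqref{transitivity}, the base-change compatibilities \eqref{basechange1}--\eqref{basechange2} and \eqref{first diagram}, the pseudofunctoriality of $(-)^*,(-)^!,(-)_*$, the sharp-adjunction relating $f^\sharp$ and $\fsh$ (Lemma~\ref{corcomposedsquare}), and standard $(f^*,\fst)$-adjunction identities. Once A123 is granted, the present proposition needs no further diagram chasing beyond matching the specialization $f=\id_X$, $f'=\id_{X'}$ correctly.
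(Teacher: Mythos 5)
Your proposal is correct and is exactly the paper's argument: the result is obtained by specializing Proposition~\ref{A123} to $Z=S$, $f=\id_X$, $f'=\id_{X'}$, after observing that the cap product is by definition the bivariant product for $X\xto{\id}X\xto{x}S$. (Your third step on signs is superfluous, since $\frown$ \emph{is} the product $\cdot$ by definition, so no sign needs to be tracked separately.)
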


\end{cosa}

\section{Checking the axioms}
\label{axioms}

In this section we prove Theorem~\ref{3.4} by verifying that the axioms for a bivariant theory do hold for the data referred to in that theorem. 

In the diagrams which appear, labels on the arrows are meant to
indicate where the represented maps come from---usually by application of some obvious functors. 

\penalty-2000
Recall from \eqref{HH*} that for an $\SS$-map $f\colon X\to Y$,
\[
\HH^i(X\xto{f\>}Y)\set  \D_\sX^i(\Hsch{\<X},\>f^!\>\Hsch{Y})\qquad (i\in\ZZ).
\]
Following \cite{fmc}, we indicate that $\alpha\in\biE* {f}{X}{Y}\set\oplus_{i\in\ZZ}\,\HH^i(X\xto{f\>}Y)$ by the notation  \vspace{-5pt}
\[
 \begin{tikzpicture}
      \draw[white] (0cm,1cm) -- +(0: \linewidth)
      node (22) [black, pos = 0.43] {$X$}
      node (23) [black, pos = 0.57] {$Y\>.$};
      \draw [->] (22) -- (23) node[auto, midway, scale=0.75]{$f$}
             node[below=1.5mm, midway, shape=circle, draw, scale=0.68]{$\!\alpha\!$};
 \end{tikzpicture}
\]

\begin{prop}
\label{A1}
\emph{$(A_1)$ Associativity of  product:}\va3

For any\/ $\SS$-diagram
  \[
   \begin{tikzpicture}
      \draw[white] (0cm,1cm) -- +(0: \linewidth)
      node (22) [black, pos = 0.29] {$X$}
      node (23) [black, pos = 0.43] {$Y$}
      node (24) [black, pos = 0.57] {$Z$}
      node (25) [black, pos = 0.71] {$W$};
      \draw [->] (22) -- (23) node[above=0.5mm, midway, scale=0.75]{$f$}
                              node[below=2mm, midway, shape=circle, draw,
                                   scale=0.68]{$\!\alpha\!$};
      \draw [->] (23) -- (24) node[above=0.5mm, midway, scale=0.75]{$g$}
                              node[below=2mm, midway, shape=circle, draw,
                                   scale=0.65]{$\!\!\!\!\beta\!\!\!\!$};
      \draw [->] (24) -- (25) node[above=0.5mm, midway, scale=0.75]{$h$}
                              node[below=2mm, midway, shape=circle, draw,
                                   scale=0.7]{$\!\!\gamma\!\!$};
   \end{tikzpicture}
  \]
one has, in\/  $\biE* {h  g  f}{X}{W},$
$$
   (\alpha\< \cdot\<\< \beta)\<\< \cdot\<\gamma=  \alpha\< \cdot\<\< (\beta\< \cdot\< \gamma).
$$

\end{prop}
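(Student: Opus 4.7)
The plan is to unwind both $(\alpha\cdot\beta)\cdot\gamma$ and $\alpha\cdot(\beta\cdot\gamma)$ directly from the definition in \ref{Product}, check that the explicit signs agree, and then reduce the equality of the underlying composites to two inputs: naturality of $\ps^!$ and the associativity coherence diagram \eqref{assoc}.

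First, I would track the signs. Set $i\set\deg\alpha$, $j\set\deg\beta$, $k\set\deg\gamma$. By definition, $\alpha\cdot\beta$ has degree $i+j$ and carries the sign $(-1)^{ij}$, so $(\alpha\cdot\beta)\cdot\gamma$ carries the total sign $(-1)^{ij+(i+j)k}=(-1)^{ij+ik+jk}$. Dually, $\beta\cdot\gamma$ has degree $j+k$ and sign $(-1)^{jk}$, so $\alpha\cdot(\beta\cdot\gamma)$ carries the total sign $(-1)^{jk+i(j+k)}=(-1)^{ij+ik+jk}$. The signs match, so associativity reduces to equality of the two underlying composites.

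Next, I would write out both composites. For $(\alpha\cdot\beta)\cdot\gamma$, unfolding the inner product gives the chain
\[
\Hsch{\<X}\xto{\alpha}f^!\Hsch{Y}\xto{f^!\beta}f^!g^!\Hsch{Z}\xto{\ps^!(\Hsch{Z})}(gf)^!\Hsch{Z}\xto{(gf)^!\gamma}(gf)^!h^!\Hsch{W}\xto{\ps^!(\Hsch{W})}(hgf)^!\Hsch{W}.
\]
For $\alpha\cdot(\beta\cdot\gamma)$, first unfolding $\beta\cdot\gamma$ inside $f^!$ and using that $f^!$ is $H$-graded (so it sends a composite to the composite, without extra sign since $\ps^!$ has degree~$0$), one obtains
\[
\Hsch{\<X}\xto{\alpha}f^!\Hsch{Y}\xto{f^!\beta}f^!g^!\Hsch{Z}\xto{f^!g^!\gamma}f^!g^!h^!\Hsch{W}\xto{f^!\ps^!(\Hsch{W})}f^!(hg)^!\Hsch{W}\xto{\ps^!(\Hsch{W})}(hgf)^!\Hsch{W}.
\]
Since $\alpha$ and $f^!\beta$ occur identically at the left of both chains, it suffices to verify that
\[
\ps^!(\Hsch{W})\ssscirc(gf)^!\gamma\ssscirc\ps^!(\Hsch{Z})\;=\;\ps^!(\Hsch{W})\ssscirc f^!\ps^!(\Hsch{W})\ssscirc f^!g^!\gamma
\]
as maps $f^!g^!\Hsch{Z}\to(hgf)^!\Hsch{W}$.

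The key step is to split this into two independent commutativities. First, naturality of the degree-$0$ functorial isomorphism $\ps^!\colon f^!g^!\iso(gf)^!$ applied to $\gamma\colon \Hsch{Z}\to h^!\Hsch{W}$ yields
\[
(gf)^!\gamma\ssscirc\ps^!(\Hsch{Z})=\ps^!(h^!\Hsch{W})\ssscirc f^!g^!\gamma;
\]
by \eqref{(-1)^mn} the sign here is $(-1)^{0\cdot k}=1$, so no correction intrudes. Second, the associativity square in \eqref{assoc} for $X\xto{f}Y\xto{g}Z\xto{h}W$, evaluated at $\Hsch{W}$, reads
\[
\ps^!(\Hsch{W})\ssscirc\ps^!(h^!\Hsch{W})\;=\;\ps^!(\Hsch{W})\ssscirc f^!\ps^!(\Hsch{W}),
\]
where on the left $\ps^!=\ps^!_{gf,h}$ and on the right $\ps^!=\ps^!_{f,hg}$. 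Composing these two equalities identifies the two displayed morphisms above, completing the proof.

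I do not expect any real obstacle: the whole argument is a routine diagram chase once the signs are audited. The only point requiring care is the appearance of $H$-graded signs; here everything collapses because every coherence involved ($\ps^!$) has degree~$0$, so the two sign contributions from the two products combine symmetrically and the graded functoriality of $f^!$ passes $(-1)^{jk}$ through without further twists.
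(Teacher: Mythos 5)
Your proposal is correct and follows essentially the same route as the paper: the paper's proof is precisely this computation packaged as a single commutative diagram whose left and middle squares encode the sign bookkeeping and the naturality of $\ps^!$ with respect to $\gamma$, and whose right square is the coherence diagram \eqref{assoc}. Your up-front sign audit and the reduction to naturality of $\ps^!$ plus \eqref{assoc} match the paper's argument exactly.
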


\begin{proof}
This property results from the obvious commutativity of the following diagram,
with $\alpha \in \biE{i}{f}{X}{Y}$, $\beta \in \biE{j}{g}{Y}{Z}$  and $\gamma \in \biE{k}{h}{Z}{W}$: 
  \[\mkern-1mu
   \begin{tikzpicture}[xscale=.96,yscale=1.2]
      \draw[white] (0cm,2.5cm) -- +(0: .9\linewidth)
      node (11) [black, pos = 0] {$\Hsch{\<X}$}
      node (12) [black, pos = 0.21] {$(gf)^!\>\Hsch{Z}$}
      node (13) [black, pos = 0.615 ] {$(gf)^!h^!\>\Hsch{W}$}
      node (15) [black, pos = 0.92] {$(h gf)^!\>\Hsch{W}$};
      \draw[white] (0cm,0.5cm) -- +(0: .9\linewidth)
      node (31) [black, pos = 0] {$f^!\Hsch{Y}$}
      node (32) [black, pos = 0.21] {$f^!g^!\>\Hsch{Z}$}
      node (33) [black, pos = 0.615] {$f^!g^!h^!\>\Hsch{W}$}
      node (35) [black, pos = 0.92] {$f^!(h g)^!\>\Hsch{W}$};
      \draw [->] (31) -- (32) node[below=1pt, midway, scale=.75]{$f^!\beta$};
      \draw [->] (32) -- (33) node[below=1pt, midway, scale=.75]{$(\<\<-1\<)^{jk}f^!g^!\gamma$};
      \draw [-, double distance=2pt] (35) -- (33)
                              node[below=1pt, midway, scale=.75]{$\ps^!$};
      \draw [->] (11) -- (12) node[above, midway, scale=.75]{$\alpha\<\cdot\<\<\beta$};
      \draw [->] (12) -- (13) node[above=1pt, midway, scale=.75]{$\<\!\!(\<\<-1\<)^{\!(i+\<j)k}\<(gf)^!\gamma$};
      \draw [-, double distance=2pt] (15) -- (13)
                              node[above=1pt, midway, scale=.75]{$\ps^!$};
      \draw [->] (11) -- (31) node[left,  midway, scale=.75]{$(\<\<-1\<)^{i(j+k)}\alpha$};
      \draw [-, double distance=2pt] (15) -- (35)
                              node[right=1pt,  midway, scale=.75]{$\ps^!$};
      \draw [-, double distance=2pt] (12) -- (32)
                              node[left=1pt,  midway, scale=.75]{$(\<\<-1\<)^{ik}\!\ps^!$};
      \draw [-, double distance=2pt] (13) -- (33)
                              node[right=1pt,  midway, scale=.75]{$\ps^!$};
   \end{tikzpicture}
  \]
\vskip-5pt
\end{proof}

\begin{prop}
\label{A2}
\emph{($A_2$) Functoriality of pushforward:}\va2

For\/ $\SS$\kern.5pt-maps $f:X\to Y\<,$ $\>g:Y\to Z$ and\/ $h:Z\to W,$ with $f$ and $g$ confined, and 
$\alpha\in\biE* {h g f}{X}{W},$ 
one has, in  $\biE* {h}{Z}{W},$
$$
\pf{(gf)} (\alpha)=\pf{g\>}\pf{f} (\alpha).
$$
\end{prop}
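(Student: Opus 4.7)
The plan is to unfold both sides according to the definition in \ref{Pushforward}, display them as two paths $\Hsch{Z}\to h^!\Hsch{W}$ in a single large diagram, and then exhibit that diagram as a union of subregions each of which commutes for one of a handful of reasons: Lemma~\ref{corcomposedsquare}, the transitivity diagram~\eqref{transitivity} for the trace $\couni{}$, the $3$-fold coherence~\eqref{assoc} of the pseudofunctor $(-)^!$, and routine naturality of $\ps_*$, $\ps^!$, $\eta$, and the given $\D_\sX$-maps.

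First I would write out $\pf{(gf)}(\alpha)$ as the composite
\[
\Hsch{Z}\xto{(gf)_\sharp\>}(gf)_*\Hsch{\<X}\xto{(gf)_*\alpha\>}(gf)_*(hgf)^!\Hsch{W}
\overset{\ps^!}{=\!=}(gf)_*(gf)^!h^!\Hsch{W}\xto{\couni{\!gf}\>}h^!\Hsch{W},
\]
and $\pf g\pf f(\alpha)$ as the composite
\[
\Hsch{Z}\xto{g_\sharp\>}g_*\Hsch{Y}\xto{g_*f_\sharp\>}g_*f_*\Hsch{\<X}
\xto{g_*f_*\alpha\>}g_*f_*(hgf)^!\Hsch{W}\to\cdots\to h^!\Hsch{W},
\]
where the tail applies $\ps^!\colon (hgf)^!\iso f^!(hg)^!\iso f^!g^!h^!$, then $\couni{\!f}$, then $\couni{\!g}$, interleaved with the appropriate $\ps_*$ identifications coming from the nested definition of $\pf g\pf f$.

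Next I would split the big diagram vertically into a ``sharp'' region on the left and an ``integral'' region on the right, glued across the arrow $(gf)_*\alpha$ (resp.\ $g_*f_*\alpha$). Commutativity of the left region is precisely Lemma~\ref{corcomposedsquare}, which gives
\[
(gf)_\sharp = \ps_*^{-1}\smallcirc (g_*f_\sharp)\smallcirc g_\sharp\colon\Hsch{Z}\to(gf)_*\Hsch{\<X}=g_*f_*\Hsch{\<X}.
\]
Naturality of $\ps_*$ in the variable $(hgf)^!\Hsch{W}$ then transports this identification past $(gf)_*\alpha = g_*f_*\alpha$ (modulo $\ps_*$). For the right region, I would expand $(gf)^!h^!\iso f^!g^!h^!$ using~\eqref{assoc} so that the ``$\ps^!$'' in the LHS path becomes the two-step $(hgf)^!\iso f^!(hg)^!\iso f^!g^!h^!$ that appears in the RHS path; the resulting diagram with $\couni{\!gf}$ on one side and $\couni{\!g}\smallcirc g_*\couni{\!f}$ on the other is exactly the $\ps_*$-transposed form of~\eqref{transitivity}.

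The steps that need care are (i) checking that the three-fold use of $\ps^!$ at the object $\Hsch{W}$ is consistent, i.e.\ that the two factorizations $(hgf)^!\Hsch{W}\iso f^!g^!h^!\Hsch{W}$ coming from the two paths agree, which is~\eqref{assoc}; and (ii) routing the naturality squares for $\ps_*$ through the successive applications of $f_*$ and $g_*$ so that the arrangement of ``$g_*f_*$'' vs.\ ``$(gf)_*$'' matches at every stage. The main obstacle is purely bookkeeping: assembling the correct large pasting diagram so that every internal subsquare commutes by one of Lemma~\ref{corcomposedsquare}, \eqref{transitivity}, \eqref{assoc}, or bare naturality. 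Once that diagram is drawn, both outer paths from $\Hsch{Z}$ to $h^!\Hsch{W}$ coincide, giving $\pf{(gf)}(\alpha)=\pf g\pf f(\alpha)$, as required.
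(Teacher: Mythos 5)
Your proposal is correct and follows essentially the same route as the paper: both unfold the two sides into one large diagram whose border is the asserted identity, dispose of the ``sharp'' part by Lemma~\ref{corcomposedsquare}, the counit part by the transitivity diagram~\eqref{transitivity}, and the rest by naturality and the pseudofunctorial coherence~\eqref{assoc}. No gaps.
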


\begin{proof}
We may assume, $\alpha \in \biE{i}{h g f}{X}{W}$. What is then
asserted is commutativity of the border of the following diagram:\va{-5}

  \[
   \begin{tikzpicture}[yscale=1.1]
      \draw[white] (0cm,9.5cm) -- +(0: \linewidth)
      node (11) [black, pos = 0.13][scale=0.9] {$\Hsch{Z}$}
      node (13) [black, pos = 0.48][scale=0.9] {$ g^{}_*\<\Hsch{Y}$};
      \draw[white] (0cm,7.7cm) -- +(0: \linewidth)
      node (21) [black, pos = 0.13][scale=0.9] {$ (gf)_*\Hsch{\<X}$}
      node (23) [black, pos = 0.48][scale=0.9] {$ g^{}_* f^{}_{\<\<*}\<\Hsch{\<X}$};
      \draw[white] (0cm,5.9cm) -- +(0: \linewidth)
      node (31) [black, pos = 0.13][scale=0.9] {\raisebox{5pt}{$ (gf)_*(hgf)^!\>\Hsch{W}$}}
      node (33) [black, pos = 0.48][scale=0.9] {\raisebox{5pt}{$ g^{}_* f^{}_{\<\<*}(hgf)^!\>\Hsch{W}$}}
      node (34) [black, pos = 0.88][scale=0.9] {\raisebox{5pt}{$ g^{}_* f^{}_{\<\<*}f^!(hg)^!\>\Hsch{W}$}};
      \draw[white] (0cm,4.1cm) -- +(0: \linewidth)
      node (41) [black, pos = 0.13][scale=0.9] {\raisebox{5pt}{$ (gf)_*(gf)^!h^!\>\Hsch{W}$}}
      node (43) [black, pos = 0.48][scale=0.9] {\raisebox{5pt}{$g^{}_* f^{}_{\<\<*}(gf)^!h^!\>\Hsch{W}$}};
      \draw[white] (0cm,2.3cm) -- +(0: \linewidth)
      node (53) [black, pos = 0.48][scale=0.9] {\raisebox{5pt}{$ g^{}_* f^{}_{\<\<*}f^!g^!h^!\>\Hsch{W}$}}
      node (54) [black, pos = 0.88][scale=0.9] 
          {\raisebox{5pt}{$ g^{}_* f^{}_{\<\<*}f^!(hg)^!\>\Hsch{W}$}};
      \draw[white] (0cm,0.5cm) -- +(0: \linewidth)
      node (61) [black, pos = 0.13][scale=0.9] {\raisebox{5pt}{$h^!\>\Hsch{W}$}}
      node (63) [black, pos = 0.48][scale=0.9] {\raisebox{5pt}{$ g^{}_*g^!h^!\>\Hsch{W}$}}
      node (65) [black, pos = 0.88][scale=0.9] {\raisebox{5pt}{$ g^{}_*(hg)^!\>\Hsch{W}$}};
      \node (etiqueta 1) at (intersection of 11--23 and 21--13) [scale=1] {\circled{1}};
      \node (etiqueta 2) at (intersection of 41--63 and 61--43) [scale=1] {\circled{2}};
      \draw [->] (11) -- (13) node[above,  midway, scale=0.75]{$g^{}_\sharp$};
      \draw [->] (13) -- (23) node[right, midway, scale=0.75]{$ g^{}_*\fsh$};
      \draw [->] (11) -- (21) node[left, midway, scale=0.75]{$(gf)^{}_\sharp$};
      \draw [-, double distance=2pt]
                 (21) -- (23) node[above=1pt, midway, scale=0.75]{$ \ps_*$};
      \draw [->] (21) -- (31) node[left,  midway, scale=0.75]{$ (gf)_* \alpha$};
      \draw [->] (23) -- (33) node[right, midway, scale=0.75]{$ g^{}_* f^{}_{\<\<*} \alpha$};
      \draw [-, double distance=2pt]
                 (31) -- (33) node[above=1pt, midway, scale=0.75]{$ \ps_*$};
      \draw [-, double distance=2pt]
                 (33) -- (34) node[above=1pt,  midway, scale=0.75]{$ \ps^!$};
      \draw [-, double distance=2pt]
                 (41) -- (43) node[below=1pt, midway, scale=0.75]{$ \ps_*$};
      \draw [-, double distance=2pt]
                 (43) -- (53) node[right=1pt,  midway, scale=0.75]{$ \ps^!$};                        
      \draw [-, double distance=2pt]
                 (31) -- (41) node[left=1pt,  midway, scale=0.75]{$ \ps^!$};
      \draw [->] (41) -- (61) node[left,  midway, scale=0.75]{$ \couni{\!gf}$};
      \draw [-, double distance=2pt]
                 (33) -- (43) node[right=1pt, midway, scale=0.75]{$ \ps^!$};
      \draw [-, double distance=2pt] (34) -- (54);
      \draw [->] (53) -- (63) node[right, midway, scale=0.75]{$ \couni{\<\<\!f}$};
      \draw [->] (54) -- (65) node[right, midway, scale=0.75]{$ \couni{\<\<\!f}$};
      \draw [-, double distance=2pt]
                 (54) -- (53) node[below=1pt, midway, scale=0.75]{$ \ps^!$};
      \draw [-, double distance=2pt]
                 (65) -- (63) node[below=1pt, midway, scale=0.75]{$ \ps^!$};
      \draw [->] (63) -- (61) node[below=1pt, midway, scale=0.75]{$ \couni{\!g}$};
  \end{tikzpicture}
\]
Commutativity of subdiagram  \circled{1} is given by Lemma~\ref{corcomposedsquare}.
Commutativity of  \circled{2} (without $h^!\>\Hsch{W}$) results from that of \eqref{transitivity}. Commutativity of the unlabeled subdiagrams is clear. 
The result follows.
\end{proof}

\pagebreak[3]

 \begin{prop}
 \label{A3}
 \emph{($A_3$) Functoriality of pullback:}\va2
 
For any\/ $\SS$\kern.5pt-diagram,  with independent squares, 
  \[
   \begin{tikzpicture}
      \draw[white] (0cm,2.5cm) -- +(0: \linewidth)
      node (12) [black, pos = 0.3] {$X''$}
      node (13) [black, pos = 0.5] {$X'$}
      node (14) [black, pos = 0.7] {$X$};
      \draw[white] (0cm,0.5cm) -- +(0: \linewidth)
      node (22) [black, pos = 0.3] {$Y''$}
      node (23) [black, pos = 0.5] {$Y'$}
      node (24) [black, pos = 0.7] {$Y$};
      \draw [->] (12) -- (13) node[above=1pt, midway, scale=0.75]{$h'$};
      \draw [->] (13) -- (14) node[above=1pt, midway, scale=0.75]{$g'$};
      \draw [->] (22) -- (23) node[below=1pt, midway, scale=0.75]{$h$};
      \draw [->] (23) -- (24) node[below=1pt, midway, scale=0.75]{$g$};
      \draw [->] (12) -- (22) node[left,  midway, scale=0.75]{$f''$};
      \draw [->] (13) -- (23) node[left,  midway, scale=0.75]{$f'$};
      \draw [->] (14) -- (24) node[right=.5pt,  midway, scale=0.75]{$f$}
                            node[left, midway]{\lift-.5,$\circled{\lift1.2,\displaystyle\alpha,}$,};
   \end{tikzpicture}
  \]
one has, in  $ \biE* {f''}{X''}{Y''},$
$$
   \pb{(gh)}(\alpha)=\pb{h}\pb{g}(\alpha).
$$
\end{prop}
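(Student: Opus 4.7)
The plan is to expand both $(gh)^\star\alpha$ and $h^\star g^\star\alpha$ as explicit compositions in $\D_{X''}$ from $\Hsch{X''}$ to $f''^!\Hsch{Y''}$, and to exhibit them as two paths around one large commutative diagram. Write $\Dd$ for the right square (with top $g'$, bottom $g$, left vertical $f'$, right vertical $f$) and $\Dd'$ for the left square (with top $h'$, bottom $h$, left vertical $f''$, right vertical $f'$); their horizontal paste $\Dd_0$ is independent by axiom (B1) of~\S\ref{the category}, has top $g'h'$ and bottom $gh$, and is the square used to define $(gh)^\star\alpha$. By definition, $(gh)^\star\alpha$ is a four-term composite built from $\bigl((g'h')^\sharp\bigr)^{-1}$, $(g'h')^*\alpha$, $\bchadmirado{\Dd_0}$ and $f''^!\bigl((gh)^\sharp\bigr)$, while, after substituting the four-term definition of $g^\star\alpha$ inside that of $h^\star(-)$ (with the functor $h'^*$ applied to the four factors), $h^\star g^\star\alpha$ unfolds as a seven-term composite through $(h'^\sharp)^{-1}$, $h'^*\bigl((g'^\sharp)^{-1}\bigr)$, $h'^*g'^*\alpha$, $h'^*(\bchadmirado{\Dd})$, $h'^*f'^!(g^\sharp)$, $\bchadmirado{\Dd'}$, and $f''^!(h^\sharp)$.

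Next I would rewrite $\bigl((g'h')^\sharp\bigr)^{-1}$ and $f''^!\bigl((gh)^\sharp\bigr)$ by invoking the transitivity axiom \eqref{trans^sharp}, applied to $X''\xto{h'}X'\xto{g'}X$ and to $Y''\xto{h}Y'\xto{g}Y$ respectively. This yields the factorizations $(g'h')^\sharp=h'^\sharp\smallcirc h'^*(g'^\sharp)\smallcirc(\ps^*)^{-1}$ and $(gh)^\sharp=h^\sharp\smallcirc h^*(g^\sharp)\smallcirc(\ps^*)^{-1}$. Inserting these into the four-term composite for $(gh)^\star\alpha$ produces an eight-term chain whose first three and last three morphisms coincide with those in the seven-term composite for $h^\star g^\star\alpha$, up to two pieces of standard naturality: naturality of $\ps^*\colon h'^*g'^*\iso(g'h')^*$ with respect to $\alpha$ (which identifies $(g'h')^*\alpha\smallcirc\ps^*$ with $\ps^*\smallcirc h'^*g'^*\alpha$), and naturality of $\bchadmirado{\Dd}\colon g'^*f^!\iso f'^!g^*$ with respect to the morphism $g^\sharp\colon g^*\Hsch{Y}\to\Hsch{Y'}$. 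What remains to compare is then the middle piece, and for this the decisive ingredient is the horizontal transitivity axiom \eqref{basechange1} applied to $\Dd_0$, which expresses $\bchadmirado{\Dd_0}$ in terms of $h'^*(\bchadmirado{\Dd})$, $\bchadmirado{\Dd'}$ and the pseudofunctoriality identifications $\ps^*$ on source and target.

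The hard part will be purely organizational. No individual cell of the resulting diagram is difficult---each commutes by a direct citation of \eqref{trans^sharp}, \eqref{basechange1}, the pseudofunctoriality diagram \eqref{assoc}, or naturality of one of the involved functors or transformations---but there are several interleaved layers of $\ps^*$, $\ps^!$ and $(-)^\sharp$ that must be threaded consistently through the $f^!$, $f'^!$ and $f''^!$ factors, and one has to keep track of on which object (source or target of $\bchadmirado{}$, or $Y$-level vs.\ $X$-level) each instance of $\ps^*$ is acting. Once the diagram is laid out, the equality $(gh)^\star\alpha=h^\star g^\star\alpha$ follows by tracing its boundary.
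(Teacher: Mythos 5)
Your proposal is correct and takes essentially the same route as the paper: the paper's proof is a single large diagram whose three labeled cells are exactly your three ingredients---\eqref{trans^sharp} applied to $X''\xto{h'}X'\xto{g'}X$ and (under $f''^!$) to $Y''\xto{h}Y'\xto{g}Y$, and \eqref{basechange1} for the pasted square---with the remaining cells being the naturality squares you describe. One small slip to fix in the write-up: the naturality square at $g^\sharp$ is naturality of $\bchadmirado{\Dd'}\colon h'^*f'^!\iso f''^!h^*$ (a transformation between functors on $\D_{Y'}$, which is where the morphism $g^\sharp\colon g^*\Hsch{Y}\to\Hsch{Y'}$ lives), not of $\bchadmirado{\Dd}\colon g'^*f^!\iso f'^!g^*$, whose source category is $\D_Y$.
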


\begin{proof}
The assertion amounts to
commutativity of the border of the next diagram:
 \[\mkern5mu
   \begin{tikzpicture}[xscale=1, yscale=1.5]
      \draw[white] (0cm,6.5cm) -- +(0: \linewidth)
      node (11) [black, pos = 0.05] {$h'{}^*\>\Hsch{\<X'}$}
      node (12) [black, pos = 0.26] {$h'{}^*\<g'{}^*\>\Hsch{\<X}$}
      node (13) [black, pos = 0.49 ] {$h'{}^*\<g'{}^*f{}^!\>\Hsch{Y}$}
      node (14) [black, pos = 0.76] {$h'{}^*\<\<{f'}{}{}^!\<g^*\>\Hsch{Y}$};
      \draw[white] (0cm,4.5cm) -- +(0: \linewidth)
      node (21) [black, pos = 0.05] {$\Hsch{\<X''}$}
      node (22) [black, pos = 0.26] {$\>(g' h')\<^*\>\Hsch{\<X}$}
      node (23) [black, pos = 0.49 ] {$(g' h')^*\<f{}^!\>\Hsch{Y}$}
      node (24) [black, pos = 0.76] {}
      node (25) [black, pos = 0.91 ] {$h'{}^*\<\<{f'}{}^!\>\Hsch{Y'}$};
      \draw[white] (0cm,2.5cm) -- +(0: \linewidth)
      node (31) [black, pos = 0.05] { }
      node (32) [black, pos = 0.26] { }
      node (33) [black, pos = 0.49 ] {${f''}{}^!(g h)^*\>\Hsch{Y}$}
      node (34) [black, pos = 0.76] {${f''}{}^!{h}^*\<g^*\>\Hsch{Y}$}
      node (35) [black, pos = 0.91 ] {};
      \draw[white] (0cm,0.49cm) -- +(0: \linewidth)
      node (41) [black, pos = 0.05] { }
      node (42) [black, pos = 0.25] { }
      node (43) [black, pos = 0.637] {${f''}{}^!\>\Hsch{Y''}$}
      node (44) [black, pos = 0.76] {}
      node (45) [black, pos = 0.91 ] {${f''}{}^!{h}^*\>\Hsch{Y'}$};
      \node (1) at (intersection of 11--22 and 12--21)  {\ \circled{$1$}};
      \node (3) at (intersection of 13--34 and 14--33)  {\ \circled{$2$}};
      \node (5) at (intersection of 33--45 and 34--43)  {\circled{$3$}};
      \draw [->] (23) -- (33) node[left,  midway, scale=0.75]{$\bchadmirado{}$};
      \draw [->] (33) -- (43) node[left=2.5pt, midway, scale=1]{$\lift.5,(gh)^\sharp,$};
      \draw [-, double distance=2pt]
                 (34) -- (33) node[above=1pt, midway, scale=0.75]{$\ps^*$};
      \draw [->] (34) -- (45) node[left=2pt, midway, scale=1]{$\lift.5,g^\sharp,$};
      \draw [<-] (22) -- (21) node[below=1pt, midway, scale=0.75]{$((g' h'){}^\sharp){}^{-\<1}$};
      \draw [->] (12) -- (13) node[above=1pt, midway, scale=0.75]{$\alpha$};
      \draw [->] (13) -- (14) node[above, midway, scale=0.75]{$\bchadmirado{}$};
      \draw [<-] (11) -- (21) node[right, midway, scale=0.75]{$(h'{}^\sharp){}^{-\<1}$};
      \draw [-, double distance=2pt]
                 (12) -- (22) node[right=1pt, midway, scale=0.75]{$\ps^*$};
      \draw [-, double distance=2pt]
                 (13) -- (23) node[left, midway, scale=0.75]{$\ps^*$};
      \draw [<-] (12) -- (11) node[above=1pt, midway, scale=0.75]{$(g'{}^\sharp)^{-\<1}$};
      \draw [->] (22) -- (23) node[below=2pt, midway, scale=0.75]{$\alpha$};
      \draw [->] (14) -- (25) node[right=1pt, midway, scale=1]{$\lift1.7,g^\sharp,$};
      \draw [->] (14) -- (34) node[left, midway, scale=0.75]{$\bchadmirado{}$};
      \draw [->] (25) -- (45) node[right, midway, scale=0.75]{$\bchadmirado{}$};
      \draw [->] (45) -- (43) node[below, midway, scale=0.75]{$h^\sharp$};
  \end{tikzpicture}
  \mkern-10mu
  \]
Subdiagrams \circled{$1$} and \circled{$3$} commute by \ref{HH}(iv);  subdiagram \circled{$2$} commutes by~\eqref{basechange1}; and commutativity of the other two subdiagrams is clear. The desired conclusion results. 
\end{proof}

\bigskip
\begin{prop}
\label{A12}
\emph{($A_{12}$) Product and pushforward commute:}\va2

For any\/ $\SS$-diagram  
\[
 \begin{tikzpicture}
    \draw[white] (0cm,0.5cm) -- +(0: \linewidth)
    node (21) [black, pos = 0.3 ] {$X$}
    node (22) [black, pos = 0.43] {$Y$}
    node (23) [black, pos = 0.56] {$Z$}
    node (24) [black, pos = 0.69] {$W$};
    \draw [->] (22) -- (23) node[above, midway, scale=0.75]{$g$};
    \draw [->] (23) -- (24) node[above, midway, scale=0.75]{$h$}
                            node[below=1mm, midway, shape=circle, draw,
                                                scale=0.65]{$\!\!\!\!\beta\!\!\!\!$};
    \draw [->] (21) -- (22) node[above, midway, scale=0.75]{$f$};
    \draw [->] (21) .. controls +(1,-1) and +(-1,-1) .. (23)
                              node[below=1mm, midway, shape=circle, draw,
                                                     scale=0.68]{$\!\alpha\!$}
                              node[above, midway, scale=0.75]{$g f$};
 \end{tikzpicture}
\]
with $f:X\to Y$ confined, one has, in $\biE* {h g}{Y}{W},$
\[
\pf{f} (\alpha\<\cdot\<\< \beta )=\pf{f} (\alpha)\<\cdot\<\< \beta
\]
\end{prop}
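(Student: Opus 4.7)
The plan is to expand both sides of the identity $\pf{f}(\alpha\<\cdot\<\<\beta)=\pf{f}(\alpha)\<\cdot\<\<\beta$ via the definitions in \S\ref{Product} and \S\ref{Pushforward}, observe that the signs match, and reduce the claim to commutativity of a single pasted diagram of naturality squares, in the spirit of the proofs of Propositions~\ref{A1}--\ref{A3}.

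With $\alpha\in\biE{i}{gf}{X}{Z}$ and $\beta\in\biE{j}{h}{Z}{W}$, both $\pf{f}(\alpha\<\cdot\<\<\beta)$ and $\pf{f}(\alpha)\<\cdot\<\<\beta$ lie in $\biE{i+j}{hg}{Y}{W}$ and each carries the sign $(-1)^{ij}$: on the LHS, from the product $\alpha\<\cdot\<\<\beta$ followed by the graded $H$-linear $\pf{f}$; on the RHS, from multiplying the degree-$i$ element $\pf{f}(\alpha)$ by the degree-$j$ element $\beta$. The signs therefore cancel, so it suffices to verify equality of two unsigned composites $\Hsch{Y}\to(hg)^!\Hsch{W}$. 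Both start with the common prefix $\Hsch{Y}\xto{\fsh}f_*\Hsch{\<X}\xto{f_*\alpha}f_*(gf)^!\Hsch{Z}$; the LHS then continues
\[
f_*(gf)^!\Hsch{Z}\xto{f_*(gf)^!\beta}f_*(gf)^!h^!\Hsch{W}\overset{\ps^!}{=\!=}f_*f^!(hg)^!\Hsch{W}\xto{\couni{\<\<\!f}}(hg)^!\Hsch{W},
\]
while the RHS continues
\[
f_*(gf)^!\Hsch{Z}\overset{\ps^!}{=\!=}f_*f^!g^!\Hsch{Z}\xto{\couni{\<\<\!f}}g^!\Hsch{Z}\xto{g^!\beta}g^!h^!\Hsch{W}\overset{\ps^!}{=\!=}(hg)^!\Hsch{W}.
\]

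Both routes can be factored through the common object $f_*f^!g^!h^!\Hsch{W}$, after which the comparison splits into three elementary naturality squares: (i) the naturality square of the isomorphism $f_*\ps^!\colon f_*(gf)^!\iso f_*f^!g^!$ applied to $\beta\colon\Hsch{Z}\to h^!\Hsch{W}$; (ii) the naturality square of $\couni{\<\<\!f}\colon f_*f^!\to\id$ applied to $g^!\beta\colon g^!\Hsch{Z}\to g^!h^!\Hsch{W}$; and (iii) the naturality square of $\couni{\<\<\!f}$ applied to $\ps^!\colon g^!h^!\iso(hg)^!$. Each commutes tautologically from the bare fact that $\couni{\<\<\!f}$ and the $\ps^!$ family are natural transformations.

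The only subtle point, and thus the main obstacle, is combinatorial rather than conceptual: the LHS two-step identification $(gf)^!h^!\iso(hgf)^!\iso f^!(hg)^!$ that arises by composing the $\ps^!$ from the definition of $\alpha\<\cdot\<\<\beta$ with the $\ps^!$ from the definition of $\pf{f}$ must be replaced by the equivalent two-step composite $(gf)^!h^!\iso f^!g^!h^!\iso f^!(hg)^!$ that naturally appears when factoring through $f_*f^!g^!h^!\Hsch{W}$. That these two composites agree is precisely the pseudofunctor-associativity axiom~\eqref{assoc} for $(-)^!$ applied to $X\xto{f}Y\xto{g}Z\xto{h}W$. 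Once this identification is in place, pasting the three naturality squares produces a commutative diagram whose border is the desired equality.
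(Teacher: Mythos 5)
Your proposal is correct and follows essentially the same route as the paper: the paper's proof is exactly the pasted diagram you describe, whose subdiagrams are the naturality squares for $\ps^!$ and $\couni{\<\<\!f}$ together with the associativity coherence~\eqref{assoc} for $(-)^!$ applied to $X\xto{f}Y\xto{g}Z\xto{h}W$. Your explicit check that the $(-1)^{ij}$ factors on the two sides cancel is a point the paper leaves implicit, but it does not change the argument.
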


\penalty-2000

\begin{proof} We may assume that $\alpha \in \biE{i}{g f}{X}{Z}$ and $\beta \in \biE{j}{h\>\>}{Z}{W}$. Then what is  asserted is commutativity of the border of the next diagram: 
\[ \mkern-3mu
 \bpic[xscale=2.6,yscale=2.6]
   \draw (.3,1) node (11){$\Hsch{Y}$};
   \draw (1,1) node (12){$f^{}_{\<\<*}\Hsch{\<X}$};
   \draw (1,0) node (13){$ f^{}_{\<\<*}(gf)^!\>\Hsch{Z}$};
   \draw (2.5,0) node (14){$f^{}_{\<\<*}(gf)^!h^!\>\Hsch{W}$};
   \draw (4.1,0) node (15){$f^{}_{\<\<*}(hgf)^!\>\Hsch{W}$};

   \draw (1,-1) node (23){$f^{}_{\<\<*}f^!g^!\>\Hsch{Z}$};
   \draw (2.5,-1) node (24){$f^{}_{\<\<*}f^!g^!h^!\>\Hsch{W}$};
   \draw (4.1,-1) node (25){$f^{}_{\<\<*}f^!(hg)^!\>\Hsch{W}$};
   
    \draw (1,-2) node (33){$g^!\>\Hsch{Z}$};
   \draw (2.5,-2) node (34){$g^!h^!\>\Hsch{W}$};
   \draw (4.1,-2) node (35){$(hg)^!\>\Hsch{W}$};

  \draw[->] (11)--(12) node[above=1pt, midway, scale=0.75]{$\fsh$};
  \draw[->] (12)--(13) node[left=1pt, midway, scale=0.75]{$\!f^{}_{\<\<*}\alpha$};
  \draw[->] (13)--(14) node[above=1pt, midway, scale=0.75]{$\beta\,$};
  \draw [-, double distance=2pt] (14)--(15) node[above=1pt, midway, scale=0.75]{$\ps^!$};
  
  \draw[->] (23)--(24) node[below=1pt, midway, scale=0.75]{$\beta\,$};
  \draw [-, double distance=2pt] (24)--(25) node[below=1pt, midway, scale=0.75]{$\ps^!$};

  \draw[->] (33)--(34) node[below=1pt, midway, scale=0.75]{$\beta\,$};
  \draw [-, double distance=2pt] (34)--(35) node[below=1pt, midway, scale=0.75]{$\ps^!$};
  
   \draw [-, double distance=2pt] (13)--(23) node[left=1pt, midway, scale=0.75]{$\ps^!$};
   \draw [-, double distance=2pt] (14)--(24) node[right=1pt, midway, scale=0.75]{$\ps^!$};
   \draw [-, double distance=2pt](15)--(25) node[right=1pt, midway, scale=0.75]{$\ps^!$};
   
   \draw[->] (23)--(33) node[left=1pt, midway, scale=0.75]{$\couni{\<\<\!f}$};
   \draw[->] (24)--(34) node[right=1pt, midway, scale=0.75]{$\couni{\<\<\!f}$};
   \draw[->] (25)--(35) node[right=1pt, midway, scale=0.75]{$\couni{\<\<\!f}$};

 \epic
\]
The subdiagrams obviously commute, whence the assertion.
\end{proof}

\begin{prop}
\label{A13}
\emph{($A_{13}$) Product and pullback commute:}\va3

For any\/ $\SS$\kern.5pt-diagram  with independent squares, 
\begin{equation*} 
 \begin{tikzpicture}scale=1.3
      \draw[white] (0cm,0.5cm) -- +(0: \linewidth)
      node (E) [black, pos = 0.4] {$Z'$}
      node (F) [black, pos = 0.59] {$Z$};
      \draw[white] (0cm,2.65cm) -- +(0: \linewidth)
      node (G) [black, pos = 0.4] {$Y'$}
      node (H) [black, pos = 0.59] {$Y$};
      \draw[white] (0cm,4.8cm) -- +(0: \linewidth)
      node (I) [black, pos = 0.4] {$X'$}
      node (J) [black, pos = 0.59] {$X$};
      \draw [->] (G) -- (H) node[above, midway, sloped, scale=0.75]{$h'$};
      \draw [->] (E) -- (F) node[below=1pt, midway, sloped, scale=0.75]{$h$};
      \draw [->] (G) -- (E) node[left, midway, scale=0.75]{$g'$};
      \draw [->] (H) -- (F) node[right=1pt, midway, scale=0.75]{$g$}
                                     node[left, midway]{\lift-.3,{$\circled{\lift1.5,\displaystyle\beta,}$},};
      \draw [->] (I) -- (J) node[above, midway, sloped, scale=0.75]{$h''$};
      \draw [->] (I) -- (G) node[left, midway, scale=0.75]{$f'$};
      \draw [->] (J) -- (H) node[right=1pt, midway, scale=0.75]{$f$}
                                 node[left, midway]{\lift-.5,$\circled{\lift1.2,\displaystyle\alpha,}$,};
 \end{tikzpicture}
\end{equation*}
one has, in $ \biE* {g'\<\<f'}{X'}{Z'},$
$$
   \pb{h}(\alpha\<\cdot\<\< \beta)=\pb{h'}(\alpha)\<\cdot\<\pb{h}(\beta).
$$
\end{prop}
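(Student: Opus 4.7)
\textbf{Proof plan for Proposition~\ref{A13}.} Write $\alpha\in\HH^i(X\xto{f}Y)$ and $\beta\in\HH^j(Y\xto{g}Z)$. Each side of the claimed equality carries the same factor $(-1)^{ij}$ coming from~\ref{Product}, since pullback is degree-preserving and $H$-linear (\ref{Pullback}). Thus it suffices to compare the two unsigned composites $\Hsch{\<X'}\to(g'\<\<f')^!\Hsch{Z'}$ obtained by unfolding the definitions of $\pb{h}(\alpha\<\cdot\<\beta)$---computed through the composite independent square $\Dd_0\set\Dd_2\smallcirc\Dd_1$, where $\Dd_1$ is the upper and $\Dd_2$ the lower square of the diagram---and of $\pb{h'}(\alpha)\<\cdot\<\pb{h}(\beta)$, computed through $\Dd_1$ and~$\Dd_2$ separately and then spliced by the $\ps^!$-identification $f'^!g'^!\iso(g'\<\<f')^!$.

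Both unsigned composites share the common initial segment $\Hsch{\<X'}\xto{((h'')^\sharp)^{-1}}h''{}^*\Hsch{\<X}\xto{h''{}^*\alpha}h''{}^*\<\<f^!\Hsch{Y}$ and terminate with an application of $(g'\<\<f')^!(h^\sharp)$, which on the right-hand side is identified via naturality of $\ps^!$ with $f'^!g'^!(h^\sharp)$. Moreover, in the expansion of $\pb{h'}(\alpha)\<\cdot\<\pb{h}(\beta)$ the adjacent pair $f'^!(h'{}^\sharp)$ and $f'^!\bigl((h'{}^\sharp)^{-1}\bigr)$ collapses to the identity on $f'^!h'{}^*\Hsch{Y}$ by functoriality of~$f'^!$. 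After these reductions the claim becomes commutativity of a single rectangle whose top edge is
\[
h''{}^*\<\<f^!\Hsch{Y}\xto{h''{}^*\<\<f^!\beta}h''{}^*\<\<f^!g^!\Hsch{Z}\overset{\ps^!}{=\!=}h''{}^*(gf)^!\Hsch{Z}\xto{\bchadmirado{\Dd_0}}(g'\<\<f')^!h^*\Hsch{Z}
\]
and whose bottom edge is
\[
h''{}^*\<\<f^!\Hsch{Y}\xto{\bchadmirado{\Dd_1}}f'^!h'{}^*\Hsch{Y}\xto{f'^!h'{}^*\beta}f'^!h'{}^*g^!\Hsch{Z}\xto{f'^!\bchadmirado{\Dd_2}}f'^!g'^!h^*\Hsch{Z}\overset{\ps^!}{=\!=}(g'\<\<f')^!h^*\Hsch{Z}.
\]

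This rectangle splits into two cells. The left cell---matching $h''{}^*\<\<f^!\beta$ followed by $\bchadmirado{\Dd_1}$ against $\bchadmirado{\Dd_1}$ followed by $f'^!h'{}^*\beta$---is the naturality of the degree-$0$ functorial isomorphism $\bchadmirado{\Dd_1}\colon h''{}^*\<\<f^!\iso f'^!h'{}^*$ evaluated on the morphism $\beta\colon\Hsch{Y}\to g^!\Hsch{Z}$ (no sign appears, since $\bchadmirado{\Dd_1}$ has degree~$0$). The remaining cell is exactly the vertical transitivity axiom \eqref{basechange2} for the composite $\Dd_0=\Dd_2\smallcirc\Dd_1$, which identifies $\bchadmirado{\Dd_0}$---modulo the two $\ps^!$ isomorphisms---with $(f'^!\bchadmirado{\Dd_2})\smallcirc\bchadmirado{\Dd_1}$.

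The main obstacle is purely bookkeeping: one has to arrange the long composites into a commutative diagram whose cells reduce to the $(h')^\sharp$ cancellation, naturality of~$\ps^!$ at $h^\sharp$, naturality of $\bchadmirado{\Dd_1}$ at~$\beta$, and one invocation of~\eqref{basechange2}. Nothing beyond graded functoriality of $(-)^*$ and~$(-)^!$, transitivity of~$\ps^!$, and the vertical-transitivity axiom for~$\bchadmirado{}$ is required.
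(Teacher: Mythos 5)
Your proposal is correct and follows essentially the same route as the paper's proof: reduce to the unsigned composites (the $(-1)^{ij}$ factors match on both sides), cancel the adjacent $f'^!(h'{}^\sharp)$ and $f'^!\bigl((h'{}^\sharp)^{-1}\bigr)$, use naturality of the degree-$0$ isomorphism $\bchadmirado{\Dd_1}$ at $\beta$, and invoke the vertical transitivity diagram \eqref{basechange2} for the pasted square. The paper packages all of this into one large diagram whose only labeled cell is the \eqref{basechange2} instance, but the content is identical.
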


\begin{proof}
We may assume that $\alpha \in \biE{i}{f}{X}{Y}$ and $\beta \in \biE{j}{g}{Y}{Z}$. 
Then what is  asserted is commutativity of the border of the next diagram: 
\[\mkern5mu
 \begin{tikzpicture}
      \draw[white] (0cm,8cm) -- +(0: .9\linewidth)
      node (01) [black, pos = 0.22] 
         {$\mkern-90mu\Hsch{\<X'}\xto{\!\!(\bup{h''})^{-\<1}\!\!\!}{h''}^*\Hsch{\<X}$}
      node (13) [black, pos = 0.88] {$f'{}^!\>\Hsch{Y'}$};
     \draw[white] (0cm,6.5cm) -- +(0: .9\linewidth)
      node (11) [black, pos = 0.22] {${h''}^*f^!\>\Hsch{Y}$}
      node (12) [black, pos = 0.55] {$f'{}^!{h'}^*\Hsch{Y}$} 
      node (23) [black, pos = 0.88] {$f'{}^!{h'}^*\Hsch{Y}$};
     \draw[white] (0cm,5cm) -- +(0: .9\linewidth)
      node (21) [black, pos = 0.22] {${h''}^*f^!g^!\>\Hsch{Z}$}
      node (22) [black, pos = 0.55] {$f'{}^!{h'}^*g^!\>\Hsch{Z}$}
      node (33) [black, pos = 0.88] {$f'{}^!{h'}^*{g}^!\>\Hsch{Z}$};
      \draw[white] (0cm,3.5cm) -- +(0: .9\linewidth)
      node (31) [black, pos = 0.22] {${h''}^*(gf)^!\>\Hsch{Z}$};
     \draw[white] (0cm,2cm) -- +(0: .9\linewidth)
      node (41) [black, pos = 0.22] {$(g' f')^!{h}^*\Hsch{Z}$}
      node (43) [black, pos = 0.88] {$f'{}^!g'{}^!{h}^*\Hsch{Z}$};
     \draw[white] (0cm,0.5cm) -- +(0: .9\linewidth)
      node (51) [black, pos = 0.22] {$(g' f')^!\>\Hsch{Z'}$}
      node (53) [black, pos = 0.88] {$f'{}^!g'{}^!\>\Hsch{Z'}$};
      \node (1) at (intersection of 21--43 and 41--33) [scale=1] {\circled{$1$}};
      \draw [->] (01) -- (11) node[left=1pt,  midway, scale=0.75]{$\>\alpha$};
      \draw [->] (11) -- (12) node[above=1pt, midway, scale=0.75]{$ \bchadmirado{}$};
      \draw [->] (12) -- (13) node[auto, midway, scale=0.75]{$ \bup{h'}$};
      \draw [-, double distance=2pt]
                 (12) -- (23) node[above=1pt, midway, scale=0.75]{};
      \draw [->] (21) -- (22) node[below=1pt, midway, scale=0.75]{$ \bchadmirado{}$};
      \draw [-, double distance=2pt] (22) -- (33) node[above=1pt, midway, scale=0.75]{};
      \draw [-, double distance=2pt]
                 (43) -- (41) node[above=1pt, midway, scale=0.75]{$ {\ps}^!$};
      \draw [-, double distance=2pt]
                 (53) -- (51) node[below=1pt, midway, scale=0.75]{$ {\ps}^!$};
      \draw [->] (11) -- (21) node[left=1pt,  midway, scale=0.75]{$ \beta$};
      \draw [->] (12) -- (22) node[left,  midway, scale=0.75]{$ \beta$};
      \draw [-, double distance=2pt]
                 (21) -- (31) node[left=1pt,  midway, scale=0.75]{$ {\ps}^!$};
      \draw [->] (31) -- (41) node[left,  midway, scale=0.75]{$ \bchadmirado{}$};
      \draw [->] (41) -- (51) node[left,  midway, scale=0.75]{$ \bup{h}$};
      \draw [->] (13) -- (23) node[right, midway, scale=0.75]{$\, ({\bup{h'}})^{-\<1}$};
      \draw [->] (23) -- (33) node[right, midway, scale=0.75]{$ \beta$};
      \draw [->] (33) -- (43) node[right, midway, scale=0.75]{$ \bchadmirado{}$};
      \draw [->] (43) -- (53) node[right, midway, scale=0.75]{$ \bup{h}$};
      
 \end{tikzpicture}
\]
Subdiagram \circled{$1$} commutes by \eqref{basechange2}; and commutativity of the other subdiagrams is clear. The desired result follows.
\end{proof}

\bigskip
\begin{prop} \emph{($A_{23}$) Pushforward and pullback commute:}\va2

For any\/ $\SS$\kern.5pt-diagram  with independent squares and with\/ $f$ confined, 
\begin{equation*} 
 \begin{tikzpicture}
      \draw[white] (0cm,0.5cm) -- +(0: \linewidth)
      node (E) [black, pos = 0.4 ] {$Z'$}
      node (F) [black, pos = 0.59] {$Z$};
      \draw[white] (0cm,2.65cm) -- +(0: \linewidth)
      node (G) [black, pos = 0.4 ] {$Y'$}
      node (H) [black, pos = 0.59] {$Y$};
      \draw[white] (0cm,4.8cm) -- +(0: \linewidth)
      node (I) [black, pos = 0.4 ] {$X'$}
      node (J) [black, pos = 0.59] {$X$};
      \node (label a) at (intersection of I--H and G--J) [scale=0.75] {$\Da$};
      \node (label b) at (intersection of G--F and H--E) [scale=0.75] {$\Db$};
      \draw [->] (G) -- (H) node[above, midway, scale=0.75]{$h'$};
      \draw [->] (E) -- (F) node[below, midway, scale=0.75]{$h$};
      \draw [->] (G) -- (E) node[left,  midway, scale=0.75]{$g'$};
      \draw [->] (H) -- (F) node[right,  midway, scale=0.75]{$g$};
      \draw [->] (I) -- (J) node[above, midway, scale=0.75]{$h''$};
      \draw [->] (I) -- (G) node[left,  midway, scale=0.75]{$f'$};
      \draw [->] (J) -- (H) node[right,  midway, scale=0.75]{$f$};
      \draw [->] (J) .. controls +(1.25,-1.25) and +(1.25,1.25) .. (F)
                       node[right=.5pt, midway]{\lift-.5,$\circled{\lift1.2,\displaystyle\alpha,}$,}
                       node[left=.5pt,  midway, scale=0.75]{$g  f$};                           
 \end{tikzpicture}
\end{equation*}
one has, in\/ $ \biE* {g'}{Y'}{Z'},$
$$
\pf{f'\!\!}(\pb{h}\<(\alpha))=\pb{h}(\pf{f}\<(\alpha)).
$$
\end{prop}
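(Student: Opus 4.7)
The plan is to expand both sides of the claimed equality as explicit composite $\D_{Y'}$-maps $\Hsch{Y'}\to g'{}^!\>\Hsch{Z'}$, and then exhibit a single large commutative diagram linking them, each of whose subdiagrams is either a naturality square of one of the pseudofunctors $(-)^*,(-)_*,(-)^!,$ $\ps^*,$ $\ps^!,$ or an instance of one of the setup axioms of \S\ref{setup}. First note that by axiom~(C) applied to $\Da$, the map $f'$ is confined (since $f$ is), so $\pf{f'}$ and $\couni{f'}$ are defined; the composite (vertical) independent square $\Dd_0\set\Db\smallcirc\Da$ has top $h''\<,$ bottom $h,$ right $gf,$ left $g'\<\<f'\<,$ and $\pb{h}(\alpha)$ is formed through $\Dd_0$, whereas $\pb{h}(\pf{f}\alpha)$ is formed through $\Db$.

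The crux of the proof rests on two axioms. First, vertical transitivity \eqref{basechange2} applied to $\Dd_0=\Db\smallcirc\Da$ provides the factorization
\[
\bchadmirado{\Dd_0}\;=\;\ps^!\ssscirc\bchadmirado{\Db}\ssscirc\bchadmirado{\Da}\ssscirc\ps^!\colon\; h''{}^*(gf)^!\to (g'\<\<f')^!h^*,
\]
splitting the pullback through $\Dd_0$ into a pullback through $\Da$ followed by a pullback through $\Db$. Second, diagram \eqref{first diagram} applied to the independent square $\Da$ with the confined map $f\colon X\to Y$ (reading off the two paths from its top-left to its bottom-right corner) yields
\[
h'{}^*\<\couni{f}\;=\;\couni{f'}\ssscirc f'_*\<\bchadmirado{\Da}\ssscirc\bchasterisco{\Da}\colon\; h'{}^*\<\<f_*f^!\to h'{}^*.
\]
This is precisely the exchange needed to convert the $h'{}^*\<\couni{f}$-step appearing in $\pb{h}(\pf{f}\alpha)$ into the $\couni{f'}$-and-$\bchadmirado{\Da}$ steps appearing in $\pf{f'}(\pb{h}\alpha)$, at the price of introducing one extra factor of $\bchasterisco{\Da}$.

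Absorbing that extra factor on the ``upstream'' side of $\alpha$ is the only remaining bookkeeping: one needs the identity
\[
f'_*\<(\bup{h''})^{-1}\ssscirc\bsub{f'}\;=\;\bchasterisco{\Da}\ssscirc h'{}^*\<\<\bsub{f}\ssscirc(\bup{h'})^{-1}\colon \Hsch{Y'}\to f'_*h''{}^*\<\Hsch{\<X},
\]
connecting the two ways the $\sharp$-maps and the base-change $\bchasterisco{\Da}$ act to produce the common source $f'_*h''{}^*\<\Hsch{\<X}$ of $f'_*\<h''{}^*\<\alpha$. By the adjunction $f'^*\<\dashv f'_*$ and the definition \eqref{def-of-bch-asterisco} of $\bchasterisco{\Da}$, this reduces to the identity $\bup{f'}\<\ssscirc\<f'^*\<\bup{h'}=\bup{h''}\<\ssscirc\<h''{}^*\<\bup{f}$ as maps $f'^*h'^*\Hsch{Y}=h''{}^*\<\<f^*\Hsch{Y}\to\Hsch{\<X'}\<,$ which is precisely the equality of the two decompositions of $(h'\<\<f')^\sharp=(f\<h'')^\sharp$ furnished by axiom~\eqref{trans^sharp} applied to the two factorizations $h'\<\<f'=fh''$ of the diagonal map $X'\to Y$. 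The main obstacle is therefore not conceptual but organizational: laying out a single large commutative diagram whose every subdiagram is one of the axioms just cited or a routine naturality square, in the same spirit as the diagrams used to verify ($A_1$)--($A_{13}$) above.
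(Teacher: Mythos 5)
Your proposal is correct and follows essentially the same route as the paper's proof: the three key ingredients you isolate --- vertical transitivity \eqref{basechange2} for the composite square $\Db\smallcirc\Da$, the compatibility \eqref{first diagram} applied to $\Da$, and the $\sharp$-compatibility with $\theta_{\Da}$ reduced by adjunction to \eqref{trans^sharp} for the two factorizations of $h'\<\<f'=fh''$ --- are exactly the subdiagrams the paper labels \circled{2}, \circled{3} and \circled{1} (the last expanded into \circled{4}, \circled{5}) in its single large diagram.
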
 

\penalty-2000

\begin{proof}
What is  asserted is commutativity of the border of the following diagram, in which  
$\Dc$ denotes the square obtained by pasting $\Da$ and $\Db$:\va{-3}
\[ 
 \bpic [xscale=3.3,yscale=1.5]  
  \draw (0,0) node(11){$\Hsch{Y'}$};
  \draw (.95,0) node(12){$f'_{\<*}\Hsch{\<X'}$};

  \draw(0,-1) node(21){$h'{}^*\Hsch{Y}$};
  \draw(.95,-1) node(22){$f'_{\<*}h''{}^*\Hsch{\<X}$};

  \draw(0,-2) node(31){$h'{}^*\!f^{}_{\<*}\Hsch{\<X}$};
  \draw(.95,-2) node(32){$f'_{\<*}h''{}^*\<(g\<f)^!\Hsch{Z}$};
  \draw(2,-2) node(33){$f'_{\<*}(g'\<\<f')^!h^*\Hsch{Z}$};
  \draw(3,-2) node(34) {$f'_{\<*}(g'\<\<f')^!\Hsch{Z'}$};
  
  \draw(0,-3) node(41){$h'{}^*\!f^{}_{\<*}(g\<f)^!\Hsch{Z}$};
  \draw(.95,-3) node(42){$f'_{\<*}h''{}^*\!f^!g^!\Hsch{Z}$};

  \draw(0,-4) node(51){$h'{}^*\!f^{}_{\<*} f^!g^!\Hsch{Z}$};
  \draw(.95,-4) node(52){$f'_{\<*}f'{}^!h'{}^*\<\<g^!\Hsch{Z}$};
  \draw(2,-4) node(53){$f'_{\<*}f'{}^!g'{}^!h^*\Hsch{Z}$};
  \draw(3,-4) node(54){$f'_{\<*}f'{}^!g'{}^!\Hsch{Z'}$};

  \draw(0,-5) node(61){$h'{}^*\<g{}^!\Hsch{Z}[\>i\>]$};
  \draw(2,-5) node(63){$g'{}^!h^*\Hsch{Z}[\>i\>]$};
  \draw(3,-5) node(64){$g'{}^!\Hsch{Z'}[\>i\>]$};
  
   \draw [->] (11) -- (12) node[above=1pt,  midway, scale=0.75]{$f'_\sharp$};
   \draw [->] (32) -- (33) node[above=1pt,  midway, scale=0.75]{$\bchadmirado{\Dc}\>\>$};  
   \draw [->] (33) -- (34) node[above=1pt,  midway, scale=0.75]{$h^\sharp$};

   \draw [->] (52) -- (53) node[below=1pt,  midway, scale=0.75]{$\bchadmirado{\Db}$};   
   \draw [->] (53) -- (54) node[below=1pt,  midway, scale=0.75]{$h^\sharp$}; 
     
   \draw [->] (61) -- (63) node[below=1pt,  midway, scale=0.75]{$\bchadmirado{\Db}$};
   \draw [->] (63) -- (64) node[below=1pt,  midway, scale=0.75]{$h^\sharp$};

   \draw [->] (11) -- (21) node[left=1pt,  midway, scale=0.75]{$(h'{}^\sharp)^{-\<1}$};
   \draw [->] (12) -- (22) node[right=1pt,  midway, scale=0.75]{$(h''{}^\sharp)^{-\<1}$};
   
   \draw [->] (21) -- (31) node[left=1pt,  midway, scale=0.75]{$f^{}_{\<\sharp}$};
   \draw [->] (22) -- (32) node[right=1pt,  midway, scale=0.75]{$\alpha$};
   
   \draw [->] (31) -- (41) node[left=1pt,  midway, scale=0.75]{$\alpha$};
   \draw [-, double distance=2pt] (32) -- (42) node[right=1pt,  midway, scale=0.75]{$\ps^!$};
   \draw [-, double distance=2pt] (33) -- (53) node[left=1pt,  midway, scale=0.75]{$\ps^!$};
   \draw [-, double distance=2pt] (34) -- (54) node[right=1pt,  midway, scale=0.75]{$\ps^!$};

   \draw [-, double distance=2pt] (41) -- (51) node[left=1pt,  midway, scale=0.75]{$\ps^!$};
   \draw [->] (42) -- (52) node[right,  midway, scale=0.75]{$\bchadmirado{\Da}$};

   \draw [->] (51) -- (61) node[left=1pt,  midway, scale=0.75]{$\couni{\!\!f}$};
   \draw [->] (53) -- (63) node[left=1pt,  midway, scale=0.75]{$\couni{\!\!f'}$};
   \draw [->] (54) -- (64) node[right=1pt,  midway, scale=0.75]{$\couni{\!\!f'}$};

  \draw [->] (31) -- (22) node[above=5pt, left=-.5pt, midway, scale=0.75]{$\bchasterisco{\Da}$};
  \draw [->] (41) -- (32) node[above=5pt, left=-.5pt, midway, scale=0.75]{$\bchasterisco{\Da}$};
  \draw [->] (51) -- (42) node[above=5pt, left=-.5pt, midway, scale=0.75]{$\bchasterisco{\Da}$};
  \draw [->] (52) -- (61) node[below=4.5pt, right=.5pt, midway, scale=0.75]{$\couni{\!\!f'}$};

 \node(1) at (.46,-.75){\circled{1}};
 \node(2) at (1.49,-3.03){\circled{2}};
 \node(3) at (.475,-4){\circled{3}};

 \epic
\]
\vskip-2pt
Commutativity of subdiagram~\circled{2} is given by~\eqref{basechange2},
and of \circled{3}  by~\eqref{first diagram}. Commutativity of the unlabeled subdiagrams is clear. 

Commutativity of subdiagram \circled1 is equivalent to that of its adjoint, and so of
the border of the following diagram, where \mbox{$k\set h'f'=fh''\<$,} so that
commutativity of  \circled4 and \circled5 results from  \eqref{trans^sharp}, and where commutativity of the other subdiagrams results
directly from the definitions of the maps involved.\va{-5}  
\[
  \bpic[xscale=3.5, yscale=1.45]

   \draw (0,0) node (12){${\smash{f'}}^*\>\Hsch{Y'}$};
   \draw (2,0) node (14){$\Hsch{\<X'\<}$};
    
   \draw (1,-1) node (00){$k^*\>\Hsch{Y}$};
 
   \draw (0,-2) node (11){${\smash{f'}}^*\<  h'{}^*\>\Hsch{Y}$};
   \draw (1,-2) node (22){${\smash{h''}}^*\!  {f}^*\>\Hsch{Y}$};
   \draw (2,-2) node (23){${\smash{h''}}^*\>\Hsch{X}$};

   \draw (0,-3) node (31){$  {\smash{f'}}^* \< h'{}^*\!  f^{}_{\<\<*}\Hsch{\<X\<}$};
   \draw (1,-3) node (32){${\smash{h''}}^*\<\<  {f}^*\! f^{}_{\<\<*}\Hsch{\<X\<}$};
   \draw (2,-3) node (33){${\smash{h''}}^*\>\Hsch{X}$};

   \draw (0,-4) node (41){$$};
   \draw (1,-4) node (42){${\smash{f'}}^*\<\<  f'_{\<\<*}\>{\smash{h''}}^*\>\Hsch{X}$};
   \draw (2,-4) node (43){$$};
   
   \draw[->] (11)--(12) node[left, midway,, scale=0.75 ]{$f'{}^*\<h'{}^\sharp$};
   \draw[->] (00)--(14) node[midway, above=-.5pt, scale=0.75]{$k^\sharp\ \ $};
   \draw[->] (12)--(14) node[above=1pt, midway, scale=0.75]{$f'{}^\sharp$};
   
   \draw [->] (11) -- (31) node[left, midway, scale=0.75]{$\<f'{}^* \< h'{}^*\<\<f_\sharp$};
   \draw[-, double distance=2pt] (11)--(22) node[above=1pt, midway, scale=0.75]{$\ps^*$};
   
   \draw[->] (22)--(23) node[above=1pt, midway, scale=0.75]{$h''{}^*\<\<f^\sharp$};
   \draw[->] (22)--(32) node[right, midway, scale=0.75]{$h''{}^*\!f^*\! f_\sharp$};
   \draw[->] (23)--(14) node[right, midway, scale=0.75]{$h''{}^\sharp$};
    \draw[-, double distance=2pt] (23)--(33) ;

   \draw[-, double distance=2pt] (31)--(32) node[below, midway, scale=0.75]{$\ps^*$};    
   \draw[->] (32)--(33) node[below, midway, scale=0.75]{${\smash{h''}}^*\couniasterisco{\<\<f}$};    
   
   \draw[->] (31)--(42) node[below,  midway, scale=0.75]{${\smash{f'}}^*  \bchasterisco{\>\>\Da}\mkern25mu$};
   \draw[->] (42)--(33) node[below=2pt, midway, scale=0.75]{$\mkern15mu\couniasterisco{\<\<f'}$};
    
   \draw[-, double distance=2pt] (11)--(00) node[midway, auto, scale=.75]{$\ps^*$};
   \draw[-, double distance=2pt] (22)--(00) node[midway, auto, scale=.75]{$\ps^*$};

   \node (2)  at (.5,-.55) [scale=1]{\circled4};
   \node (3)  at (1.5,-1.3) [scale=1]{\circled5};
   
     \epic
\]

The desired result follows.
\end{proof}

\begin{prop}\label{A123} 
\emph{($A_{123}$) Projection formula:}\va2

For any\/  $\SS$-diagram, with independent square and\/ $g$ confined,
  \[\mkern-75mu
   \begin{tikzpicture}[xscale=1.1, yscale=1.05]
      \draw[white] (0cm,0.5cm) -- +(0: .75\linewidth)
      node (22) [black, pos = 0.35] {$Y'$}
      node (23) [black, pos = 0.6] {$Y$}
      node (24) [black, pos = 0.85] {$Z$};
      \draw[white] (0cm,2.8cm) -- +(0: .75\linewidth)
      node (12) [black, pos = 0.35] {$X'$}
      node (13) [black, pos = 0.6] {$X$};
      \draw [->] (12) -- (13) node[above, midway, scale=0.75]{$g'$};
      \draw [->] (22) -- (23) node[below, midway, scale=0.75]{$g$};
      \draw [->] (23) -- (24) node[below, midway, scale=0.75]{$h$};
      \draw [->] (12) -- (22) node[left,  midway, scale=0.75]{$f'$};
      \draw [->] (13) -- (23) node[right=1pt,  midway, scale=0.75]{$f$}
                             node[left, midway]{\lift-.5,$\circled{\lift1.2,\displaystyle\alpha,}$,};
      \draw [->] (22) .. controls +(1,-1.1) and +(-1,-1.1) .. (24)
                               node[below=5pt, midway]{\lift-.5,$\circled{\lift1.6,\displaystyle\beta,}$,}
                              node[above,     midway, scale=0.75]{$h g$};
      \node (A) at (intersection of 22--13 and 23--12) [scale=0.75] {$\Dd$};                          
                        
   \end{tikzpicture}
  \]
one has, in\/ $\biE* {h f}{X}{Z}.$
 \[
  \pf{g'}(\pb{g}\alpha\<\cdot\<\< \beta)=\alpha\<\cdot\pf{g\>}(\beta).
 \]
\end{prop}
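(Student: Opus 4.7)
My plan is to expand both sides of the identity using the definitions in \ref{Pushforward}, \ref{Pullback} and \ref{Product}, and then to verify the equality through a single large commutative diagram, in the style of the preceding propositions. Observe first that both sides carry the same sign $(-1)^{ij}$ coming from the product (with $i=\deg\alpha$, $j=\deg\beta$), so it suffices to equate the underlying compositions. Using $hgf'=hfg'$, the LHS composition from $\Hsch{\<X}$ to $(hf)^!\Hsch{Z}$ is
\[
\Hsch{\<X}\xto{g'_{\<\sharp}} g'_{\<*}\Hsch{\<X'}\xto{g'_{\<*}(g'{}^\sharp)^{-\<1}} g'_{\<*}g'{}^*\Hsch{\<X}\xto{g'_{\<*}g'{}^*\alpha}g'_{\<*}g'{}^*\!f^!\Hsch{Y}\xto{g'_{\<*}\bchadmirado{\Dd}}g'_{\<*}f'{}^!g^*\Hsch{Y}\xto{g'_{\<*}f'{}^!g^\sharp}g'_{\<*}f'{}^!\Hsch{Y'}
\]
\[
\xto{g'_{\<*}f'{}^!\beta} g'_{\<*}f'{}^!(hg)^!\Hsch{Z}\overset{\ps^!}{=\!=}g'_{\<*}(hgf')^!\Hsch{Z}\overset{\ps^!}{=\!=} g'_{\<*}g'{}^!(hf)^!\Hsch{Z}\xto{\couni{g'}}(hf)^!\Hsch{Z},
\]
in which the initial subcomposition $g'_{\<*}(g'{}^\sharp)^{-\<1}\circ g'_{\<\sharp}$ collapses to $\eta_{g'}$ by the very definition of $g'_{\<\sharp}$. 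The RHS unwinds analogously as
\[
\Hsch{\<X}\xto{\alpha}f^!\Hsch{Y}\xto{f^!\eta_g}f^!g_*g^*\Hsch{Y}\xto{f^!g_*g^\sharp}f^!g_*\Hsch{Y'}\xto{f^!g_*\beta}f^!g_*(hg)^!\Hsch{Z}\xto{f^!\ps^!}f^!g_*g^!h^!\Hsch{Z}\xto{f^!\couni{g}}f^!h^!\Hsch{Z}\overset{\ps^!}{=\!=}(hf)^!\Hsch{Z}.
\]

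The main task is now to build a large diagram linking these two paths. The natural bridge is the degree\kf-$0$ natural transformation
\[
\xi\colon f^!g_*\to g'_{\<*}f'{}^! \qquad\text{given by}\qquad
f^!g_*\xto{\eta_{g'}}g'_{\<*}g'{}^*\!f^!g_*\xto{g'_{\<*}\bchadmirado{\Dd}}g'_{\<*}f'{}^!g^*g_*\xto{g'_{\<*}f'{}^!\epsilon_g}g'_{\<*}f'{}^!,
\]
which is the ``twisted base change'' associated with $\Dd$. By naturality of $\eta_{g'}$ and $\epsilon_g$, together with one triangular identity for the adjunction $g^*\dashv g_*$, the top portion of the diagram commutes: the composite $g'_{\<*}\bchadmirado{\Dd}\circ\eta_{g'}(f^!{-})$ equals $\xi(g^*{-})\circ f^!\eta_g$ after $\alpha$ has been pushed through by naturality of $\eta_{g'}$, and the subsequent applications of $g^\sharp$ and $\beta$ match under $\xi$ by the naturality of $\xi$.

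The essential obstacle is the bottom portion of the diagram, where the two integrals $\couni{g}$ and $\couni{g'}$ must be reconciled. What is needed is commutativity of
\[
\CD
f^!g_*g^!\Hsch{}  @>\xi(g^!{-})>>   g'_{\<*}f'{}^!g^!\Hsch{} \\
@Vf^!\couni{g}VV                        @VV g'_{\<*}\ps^!\,V \\
f^!\Hsch{}                              @<\couni{g'}<<            g'_{\<*}g'{}^!f^!\Hsch{}
\endCD
\]
i.e., the compatibility of $\bchadmirado{\Dd}$ with the counits of the adjunctions $g_*\<\dashv g^!$ and $g'_{\<*}\<\dashv g'{}^!$. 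This identity is not quite an axiom of the setup, but it is implicit in it: taking the adjoint across $g'^*\dashv g'_*$ reduces it to a statement about the interaction of $\bchadmirado{\Dd}$ with $\eta_g$, $\epsilon_{g'}$, $\couni{g}$ and $\couni{g'}$, which can be deduced by pasting $\Dd$ with itself vertically and applying \eqref{basechange2} in conjunction with the triangle identities for the two relevant unit--counit pairs. Once this compatibility is in hand, every remaining subdiagram commutes either by naturality of the stated transformations, by the functoriality~\eqref{trans^sharp} of $(-)^\sharp$, by Lemma~\ref{corcomposedsquare}, or by transitivity of $\ps^!$. Assembling these pieces yields the desired equality $\pf{g'}(\pb{g}\alpha\cdot\beta)=\alpha\cdot\pf{g}\beta$.
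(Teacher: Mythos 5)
Your overall strategy coincides with the paper's: expand both sides, observe that the common sign $(-1)^{ij}$ cancels, bridge the two composites by the transformation $f^!g_*\to g'_{\<*}f'{}^!$ (your $\xi$ is precisely the map $\phi_{\Dd}$ introduced in \S\ref{comm diag}), handle the upper part of the diagram by naturality of $\eta_{g'}$, $\bchadmirado{\Dd}$ and a triangle identity for $g^*\!\dashv g_*$ (together with the adjointness of $g^\sharp$ and $g^{}_\sharp$), and reduce everything to the single square relating $\xi$ to the two integrations $\couni{\<\!g}$ and $\couni{\<\!g'}$. Up to that point the argument is correct and is essentially the paper's proof.

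The problem is your treatment of that final square. You assert that it ``is not quite an axiom of the setup'' and propose to derive it by ``pasting $\Dd$ with itself vertically and applying \eqref{basechange2} in conjunction with the triangle identities.'' Both claims are wrong. First, the square you need is \emph{literally} the postulate \eqref{second diagram} of \S\ref{comm diag}, applied to the independent square $\Dd$ (whose bottom arrow $g$, hence top arrow $g'$, is confined) and evaluated at $h^!\Hsch{Z}$: with $u=g$, $v=g'$ and left vertical $f'$, diagram \eqref{second diagram} reads exactly
$\couni{\<\!g'}\smallcirc\> g'_{\<*}\ps^!\smallcirc\>\phi_{\Dd}=f^!\couni{\<\!g}$, which is your missing identity. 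Second, the proposed derivation cannot work: ``pasting $\Dd$ with itself vertically'' is not a meaningful operation (vertical composition requires the bottom arrow of one square to be the top arrow of the other), and \eqref{basechange2} only governs the interaction of $\bchadmirado{}$ with $\ps^!$ under such composition --- it says nothing about the maps $\couni{\<\!g}$, $\couni{\<\!g'}$, which in the abstract setup are not counits of any postulated adjunction and hence are not controlled by triangle identities. That is precisely why \eqref{second diagram} is taken as a separate axiom; in the concrete realization its verification (\S\ref{2nd diag}) requires identifying $(-)^!$ with a genuine right adjoint of $(-)_*$ and invoking conjugacy with $\theta_{\Dd}$, not merely formal manipulations. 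The gap is therefore real as written, but trivially repaired: replace your purported derivation by a citation of \eqref{second diagram}.
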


\begin{proof} We may assume that $\alpha \in \biE{i}{f}{X}{Y}$ and $\beta\in \biE{j}{h g}{Y'}{Z}$.
What is asserted is commutativity of the border of the next diagram~\eqref{forA123},
\begin{figure}[ht]
\begin{equation}\label{forA123}
 \mkern-5mu
   \begin{tikzpicture}[xscale=1.01, yscale=1.04]
      \draw[white] (0cm,16cm) -- +(0: \linewidth)
      node (1)  [black, pos = 0.1 ][scale=0.95] {$\Hsch{\<X}$}
      node (2)  [black, pos = 0.35][scale=0.95] {$g'_*\Hsch{X'}$};
      \draw[white] (0cm,13.5cm) -- +(0: \linewidth)
      node (01) [black, pos = 0.1 ][scale=0.95] {$\Hsch{\<X}$}
      node (02) [black, pos = 0.35][scale=0.95] {$g'_*g'{}^*\Hsch{\<X}$};
      \draw[white] (0cm,11cm) -- +(0: \linewidth)
      node (11) [black, pos = 0.1 ][scale=0.95] {$f^!\>\Hsch{Y}$}
      node (12) [black, pos = 0.35][scale=0.95] {$ g'_*g'{}^*f^!\>\Hsch{Y}$}
      node (13) [black, pos = 0.65][scale=0.95] {$ g'_*f'{}^!\<g^*\Hsch{Y}$}
      node (14) [black, pos = 0.9 ][scale=0.95] {};
      \draw[white] (0cm,8.5cm) -- +(0: \linewidth)
      node (21) [black, pos = 0.1 ][scale=0.95] {$f^! g_*\Hsch{Y'}$}
      node (22) [black, pos = 0.35][scale=0.95] {$ g'_*g'{}^*\<\<f^! g_*\Hsch{Y'}$}
      node (23) [black, pos = 0.65][scale=0.95] {$ g'_*f'{}^!\<g^*\<g_*\Hsch{Y'}$}
      node (24) [black, pos = 0.9 ][scale=0.95] {$ g'_*f'{}^!\>\Hsch{Y'}$};
      \draw[white] (0cm,6cm) -- +(0: \linewidth)
      node (31) [black, pos = 0.1 ][scale=0.95] {$f^! g_*(h g)^!\>\Hsch{Z}$}
      node (32) [black, pos = 0.35][scale=0.95] {$ g'_*g'{}^*\<\<f^! g_*(h g)^!\>\Hsch{Z}$}
      node (33) [black, pos = 0.65][scale=0.95] {$ g'_*f'{}^!\<g^*\<g_*(h g)^!\>\Hsch{Z}$}
      node (34) [black, pos = 0.9 ][scale=0.95] {$ g'_*f'{}^!(h g)^!\>\Hsch{Z}$};
      \draw[white] (0cm,3.5cm) -- +(0: \linewidth)
      node (41) [black, pos = 0.1 ][scale=0.95] {$f^! g_*g^!h^!\>\Hsch{Z}$}
      node (42) [black, pos = 0.35][scale=0.95] {$g'_*g'{}^*\<\<f^! g_*g^!h^!\>\Hsch{Z}$}
      node (43) [black, pos = 0.65][scale=0.95] {$g'_*f'{}^!\<g^*\<g_*g^!h^!\>\Hsch{Z}$}
      node (44) [black, pos = 0.9 ][scale=0.95] {$g'_*(hgf')^!\>\Hsch{Z}$};
      \draw[white] (0cm,1cm) -- +(0: \linewidth)
      node (51) [black, pos = 0.1 ][scale=0.95] {$f^! h^!\>\Hsch{Z}$}
      node (52) [black, pos = 0.35][scale=0.95] {$(hf)^!\>\Hsch{Z}$}
      node (53) [black, pos = 0.65][scale=0.95] {$g'_*g'{}^!(hf)^!\>\Hsch{Z}$}
      node (54) [black, pos = 0.9 ][scale=0.95] {$g'_*(hfg')^!\>\Hsch{Z}$};

      \draw [->] (1) -- (2)   node[above, midway, scale=0.75]{$\bsub{g'\<\<\!}$};
      \draw [->] (01) -- (02) node[above, midway, scale=0.75]{$\uniasterisco{g'}$};
      \draw [->] (11) -- (12) node[above, midway, scale=0.75]{$\uniasterisco{g'}$};
      \draw [->] (12) -- (13) node[above, midway, scale=0.75]{$\bchadmirado{\Dd}$};
      \draw [->] (21) -- (22) node[above, midway, scale=0.75]{$\uniasterisco{g'}$};
      \draw [->] (22) -- (23) node[above, midway, scale=0.75]{$\bchadmirado{\Dd}$};
      \draw [->] (23) -- (24) node[above, midway, scale=0.75]{$\couniasterisco{\<g}$};
      \draw [->] (31) -- (32) node[above, midway, scale=0.75]{$\uniasterisco{g'}$};
      \draw [->] (32) -- (33) node[above, midway, scale=0.75]{$\bchadmirado{\Dd}$};
      \draw [->] (33) -- (34) node[above, midway, scale=0.75]{$\couniasterisco{g}$};
      \draw [->] (41) -- (42) node[above, midway, scale=0.75]{$\uniasterisco{g'}$};
      \draw [->] (42) -- (43) node[above, midway, scale=0.75]{$\bchadmirado{\Dd}$};
      \draw [-, double distance=2pt] (51) -- (52)
                              node[below=1pt, midway, scale=0.75]{${\ps}^!$};
       \draw [-, double distance=2pt] (54) -- (53)
                              node[below=1pt, midway, scale=0.75]{${\ps}^!$};
            \draw [->] (53) -- (52) node[below=1pt, midway, scale=0.75]{$\couni{\<\!g'}$};
      \draw [-, double distance=2pt] (1) -- (01) node[left=1pt, midway, scale=0.75]{};
      \draw [->] (01) -- (11) node[left=1pt, midway, scale=0.75]{$\alpha$};
      \draw [->] (11) -- (21) node[left=1pt, midway, scale=0.75]{$\bsub{g^{}\<\<}$};
      \draw [->] (21) -- (31) node[left=1pt, midway, scale=0.75]{$\beta$};
      \draw [-, double distance=2pt] (31) -- (41)
                              node[left=1pt, midway, scale=0.75]{${\ps}^!$};
      \draw [->] (41) -- (51) node[left=1pt, midway, scale=0.75]{$\couni{\<\!g}$};
     
      \draw [->] (2) -- (02)  node[right=1pt, midway, scale=0.75]{$({\bup{g'}})^{-\<1}$};
      \draw [->] (02) -- (12) node[right=1pt, midway, scale=0.75]{$\alpha$};
      \draw [->] (12) -- (22) node[left=1pt, midway, scale=0.75]{$\bsub{g^{}\<\<}$};
      \draw [->] (22) -- (32) node[left=1pt, midway, scale=0.75]{$\beta$};
      \draw [-, double distance=2pt] (32) -- (42)
                              node[left=1pt, midway, scale=0.75]{${\ps}^!$};
      \draw [->] (13) -- (23) node[left=1pt, midway, scale=0.75]{$\bsub{g^{}\<\<}$};
      \draw [->] (23) -- (33) node[left=1pt, midway, scale=0.75]{$\beta$};
      \draw [-, double distance=2pt] (33) -- (43)
                              node[left=1pt, midway, scale=0.75]{${\ps}^!$};
      \draw [->] (13) -- (24) node[right=10pt, above=-4.5pt, midway, scale=0.75]{$\bup{g}$};
      \draw [->] (24) -- (34) node[right=1pt, midway, scale=0.75]{$\beta$};
      \draw [-, double distance=2pt] (34) -- (44)
                              node[right=1pt, midway, scale=0.75]{${\ps}^!$};
      \draw [-, double distance=2pt] (44) -- (54);
      \node (A) at (intersection of 1--02  and 01--2)  {\!\!\!\!\circled{1}};
      \node (B) at (intersection of 13--34 and 14--23){\circled{2}};
      \node (C) at (intersection of 41--54 and 44--51) {\circled{3}};
  \end{tikzpicture}
  \end{equation}
 \end{figure}
 in which commutativity of the unlabeled subdiagrams is obvious, and commutativity of subdiagrams \circled1 and \circled2 holds by adjointness of $g'{}^\sharp$ and $g'_\sharp$ (resp.~$g^\sharp$ and $g_\sharp$).
 It  remains then to show that \circled{3} commutes. 
 
 Via the next, obviously commutative, diagram (in which $\Hsch{Z}$ is omitted),
 \[
  \bpic[xscale=1.1, yscale =1.2]
   \node(11) at (1,-1)[scale=0.85]{$ g'_*f'{}^!\<g^*\<g_*(h g)^!$};
   \node(12) at (3.3,-1)[scale=0.85]{$ g'_*f'{}^!(h g)^!$};
   \node(13) at (5.3,-1)[scale=0.85]{$g'_*(hgf')^!$};
   \node(14) at (7.2,-1)[scale=0.85]{$g'_*(hfg')^!$};
   \node(15) at (9.2,-1)[scale=0.85]{$g'_*g'{}^!(hf)^!$};
   \node(16) at (10.95,-1)[scale=0.85]{$(hf)^!$};

   \node(21) at (1, -2)[scale=0.85]{$g'_*f'{}^!\<g^*\<g_*g^!h^!$};
   \node(22) at (3.3, -2)[scale=0.85]{$g'_*f'{}^!\<g^!h^!$};
   \node(23) at (5.3,-2)[scale=0.85]{$g'_*(gf')h^!$};
   \node(24) at (7.2,-2)[scale=0.85]{$g'_*(fg')h^!$};
   \node(25) at (9.2,-2)[scale=0.85]{$g'_*g'{}^!\<f^!h^!$};
   \node(26) at (10.95,-2)[scale=0.85]{$f^!h^!$};

  \draw[->] (11)--(12) node[above=1pt, midway, scale=0.65]{$\couniasterisco{g}$};
  \draw[-, double distance=2pt](12)--(13) node[above=1pt, midway, scale=0.65]{${\ps}^!$};
  \draw[-, double distance=2pt] (13)--(14);
  \draw[-, double distance=2pt] (14)--(15) node[above=1pt, midway, scale=0.65]{${\ps}^!$};
  \draw[->] (15)--(16) node[above=1pt, midway, scale=0.7]{$\couni{\<\!g'}$};

  \draw[->] (21)--(22) node[below=1pt, midway, scale=0.65]{$\couniasterisco{g}$};
  \draw[-, double distance=2pt](22)--(23) node[below=1pt, midway, scale=0.65]{${\ps}^!$};
  \draw[-, double distance=2pt] (23)--(24);
  \draw[-, double distance=2pt] (24)--(25) node[below=1pt, midway, scale=0.65]{${\ps}^!$};
  \draw[->] (25)--(26) node[below=1pt, midway, scale=0.7]{$\couni{\<\!g'}$};
  
  \draw[-, double distance=2pt] (21)--(11) node[left=1pt, midway, scale=0.65]{${\ps}^!$};
  \draw[-, double distance=2pt] (22)--(12) node[left=1pt, midway, scale=0.65]{${\ps}^!$};
  \draw[-, double distance=2pt] (23)--(13) node[left=1pt, midway, scale=0.65]{${\ps}^!$};
  \draw[-, double distance=2pt] (24)--(14) node[right=1pt, midway, scale=0.65]{${\ps}^!$};
  \draw[-, double distance=2pt] (25)--(15) node[right=1pt, midway, scale=0.65]{${\ps}^!$};
  \draw[-, double distance=2pt] (26)--(16) node[right=1pt, midway, scale=0.65]{${\ps}^!$};
 \epic
 \]
 commutativity of \circled3 becomes equivalent to that of 
 \begin{equation}\label{new3}
 \CD
  \bpic[xscale=3, yscale = 1.3]
  
      \node (11) at (1,-1) {$f^! g_*\<g^!h^!$};
      \node (12) at (1.93,-1) {$g'_*g'{}^*\<\<f^! g_*g^!h^!$};
      \node (13) at (3,-1) {$g'_*f'{}^!g^*\< g_*g^!h^!$};
      
      \node (21) at (1,-2) {$f^!h^!$};
      \node (22) at (1.93,-2) {$g'_*g'{}^!\<f^!h^!$};
      \node (23) at (3,-2) {$g'_*f'{}^!g^!h^!$};
      
      \draw [->] (11) -- (12) node[above, midway, scale=0.75]{$\uniasterisco{g'}$};
      \draw [->] (12) -- (13) node[above, midway, scale=0.75]{$\textup{via}\;\bchadmirado{\Dd}$};

      \draw[->] (22)--(21) node[below=1pt, midway, scale=0.75]{$\couni{\<\!g'}$};
      \draw[-, double distance=2pt] (22)--(23) node[below=1pt, midway, scale=0.75]{$g'_*\ps^!$};
      
        \draw[->] (11)--(21) node[left=1pt, midway, scale=0.75]{$f^!\couni{\<\!g}$};

        \draw [->] (13) -- (23) node[right=1pt, midway, scale=0.75]{$\textup{via}\;\couniasterisco{\<g}$};
  \epic
 \endCD
\end{equation}
which commutativity is an instance of that of \eqref{second diagram}.

The proof of Proposition~\ref{A123} is now complete.
\end{proof}
\vfill

\pagebreak

\section{Realization via Grothendieck duality}\label{realization}

In this section we show that the setup of \S\ref{setup} can be realized in a number of situations involving Grothendieck duality.
  
\begin{cosa}\label{prelim}
 (Notation and summary.)
A \emph{ringed space} is a pair~$(X,\OX)$ such that $X$~is a topological space and $\OX$ is a sheaf of commutative rings on ~$X\<$. Though  only schemes will be of interest in this paper, some  initial results make sense for arbitrary ringed spaces, enabling us to treat several situations simultaneously. For example, it may well be possible to go through all of this section in the context of
noetherian formal schemes, see \cite{AJL}, \cite[7.1.6]{Nk0}.\looseness=-1

A map of ringed spaces $\bar f\colon(X, \OX)\to(Y,\OY)$ is a continuous map
$f\colon X\to Y$ together with a homomorphism of sheaves of rings $\OY\to \fst\OX$. Composition of such maps is defined in the obvious way.  Ordinarily,
$\OX$ and~$\OY$ are omitted from the notation, and one just speaks of ringed-space maps $f\colon X\to Y\<$, the rest
being understood.

For a ringed space $(X,\OX)$,  let $\D(X)$ be the derived category of the abelian category of sheaves of $\OX$-modules, and $T=T_\sX$ its  usual translation automorphism. For $A\in\D(X)$ (object or arrow) and $i\in\ZZ$, set $A[i\>]\set T^{\>i}\<\<A$. 

We take for granted the formalism of relations among the derived functors $\R\sHom$ and $\Otimes{}$
and the derived direct-  and~inverse-image pseudofunctors $\R(-)_*$ resp.~$\LL(-)^*\<$, as presented e.g., in \cite[Chapter 3]{li}.%
\footnote{We will often use \cite {li} as a convenient compendium of needed facts about Grothendieck duality
for schemes. This does not mean that  referred-to results cannot be found in other earlier sources.}
For instance, 
for any $f\colon X\to Y\<$ as above, the functor $\LL f^*\colon\D(Y)\to\D(X)$ is 
left-adjoint to~$\R f^{}_{\<\<*}$, see \cite[3.2.3]{li}; in particular, there are unit and  counit maps  
\begin{equation}\label{baretaeps}
\bar\eta=\uniasteriscode{f},\qquad \bar\epsilon=\couniasteriscode{\<f}. 
\end{equation}

For any $f\colon X\to Y\<$, there are canonical functorial isomorphisms 
$$
\Rf\smallcirc T_\sX\iso T_Y\<\smallcirc \Rf\>,\qquad \LL f^*\<\<\smallcirc T_Y\iso T_\sX\<\smallcirc \LL f^*.
$$
Accordingly, for any $A\in\D(X)$, $B \in\D(Y)$ and $i\in\ZZ$, we will allow ourselves to abuse notation by writing
$$
\Rf \<\big(A[i\>] \>\big)= (\Rf A)[i\>],\qquad \LL f^*\<\<\big(B[i\>] \>\big)= (\LL f^*\<\<B)[i\>].
$$

\begin{subcosa}\label{E_X} 
Let $\bE_\sX$ be the preadditive category whose objects $A,B,C,\dots$ are just those of $\D(X)$, with 
$$
\bE_\sX^i(A,B)\set\Hom_{\D(X)}\!\big(A,B[i\>]\big)\cong \ext_\sX^i(A,B),
$$
and composition determined by the graded $\ZZ\>\>$-bilinear Yoneda product
\[
\bE^i_\sX(B,C)\times \bE^j_\sX(A,B) 
\to
\bE^{i+j}_\sX(A,C)
\]
taking a pair of $\D(X)$-maps $\beta\colon B\to C[i\>]$, $\alpha\colon A\to B[\>j\>]\ (i,j\in\ZZ)$ to the map
$$
(\beta\<\ssscirc\<\alpha)\colon A\xto{\alpha\>\>} B[\>j\>]\xto{\beta[\>j\>]\>}C[i\>][\>j\>]=C[i+j\>].
$$
\end{subcosa}

\begin{subcosa}\label{HX}
In subsection~\ref{D_W}, using their interaction with translation functors,
we enrich the derived direct-  and inverse\kf-image pseudofunctors to an adjoint pair of 
\emph{$\>\>\ZZ$-graded} pseudofunctors $(-)^*$ and $(-)_{\<*}$ on the category of ringed spaces, taking values in the categories $\bE_\sX$.  

\penalty-2000
Then we show in Proposition~\ref{E is H-graded} that 
\[
H_\sX\set\bE_\sX(\OX,\OX)=\oplus_{i\ge0}\,\ext^i_\sX\<(\OX,\OX\<)\cong \oplus_{i\ge 0} \h^i(X,\OX)
\]
with its Yoneda product is a commutative\kern.5pt-graded ring, and that the category $\bE_\sX$ is 
naturally $H_\sX$-graded---whence so is any full subcategory. In fact, Proposition~\ref{tensor graded} gives that $H_\sX$ can be identified with the subring of the graded center of~$\bE_\sX$
consisting of all ``tensor-compatible" elements. Furthermore, Proposition~\ref{* and T} gives that  
for any map $f\colon X\to Y\<$, the functors
$f^*$ and $\fst$ respect such graded structures.
\end{subcosa}

\begin{subcosa} \label{finite pres}
A scheme\kern.5pt-map  $f\colon X\to Y$ is \emph{essentially of finite presentation} if it is quasi-compact and quasi-separated, and if for all $x\in X\<$, the local ring $\mathcal O_{\<\<X\<,\>x}$ is a ring of fractions of a finitely-presentable $\mathcal O_{Y\<\<,\>f(x)}$-algebra. The last condition is equivalent to the existence of affine open neighborhoods $\spec L$ of~$x$ and $\spec K$ of~$f(x)$ such that $L$ is a ring of fractions of a finitely generated $K$-algebra. 

For maps of noetherian schemes, we use in place of ``finite presentation" the equivalent term ``finite type."  
 
\end{subcosa} 

\begin{subcosa}\label{examples}
Now fix a scheme $S$, and
let $\SS$  be one~of: \va1

(a) The category of essentially-finite\kf-type separated perfect (i.e.,  finite tor-dimension) maps of 
noetherian $S$-schemes,  with proper maps as confined maps, and oriented fiber squares with flat bottom arrow as independent squares;

(b) The category of composites of \'etale maps and flat quasi-proper (equivalently, flat quasi-perfect) maps of arbitrary quasi-compact quasi-separated $S$-schemes (see \cite[\S4.7]{li}), with quasi-proper maps confined and all oriented fiber squares\va1  independent.
(The reader who wishes to avoid the technicalities involved  can safely ignore this case (b).)

Conditions (A1), (A2), (B1), (B2) and (C) in \S\ref{the category} are then easily checked.\looseness=-1

As is customary, we will usually denote an object $W\xto{w}S$ in $\SS$ simply by~$W\<$,  with the understanding
that $W$ is equipped with a ``structure map"~$w$.

For any such $W\<$, let $\D_W$ be the full subcategory of $\bE_W$ whose objects are just those of $\Dqc(W)$, 
that is,  $\OX$-complexes whose homology sheaves are all quasi-coherent.
Since for $f\colon X\to Y$ in $\SS$ it holds that $\Lf\Dqc(Y)\subset\Dqc(X)$ \cite[3.9.1]{li} and 
$\Rf\Dqc(X)\subset\Dqc(Y)$ \cite[3.9.2]{li}, 
it follows that the 
pseudo\-functors $(-)^*$ and $(-)_{\<*}$ in~\ref{HX} can  be restricted  to take values in the categories~
$\D_W$. It is assumed henceforth that they are so restricted.

Let $H$ be the commutative\kf-graded ring $H_S\set \bE_S(\OS,\OS)$. 
For any $\SS$-object $w\colon W\to S$, the natural composite map
$$
\bE_S(\OS,\OS)\to\bE_W(w^*\<\OS,w^*\<\OS)\iso\bE_W(\OW,\OW)
$$
is a graded-ring homomorphism from $H_S$ to $H_W$. Hence $\D_W$ is $H\<$-graded, and the adjoint pseudofunctors $(-)^*$ and $(-)_{\<*}$ are $H\<$-graded, see ~\ref{HX}.

We note in Proposition~\ref{theta2} that for an independent square $\Dd$, the associated functorial map~$\theta_{\<\Dd}$ (\S\ref{theta}) is a degree\kf-0 isomorphism.

Thus, we have in place all those elements of a setup that do not involve the pseudofunctor $(-)^!$. 
\end{subcosa}

\begin{subcosa}
In subsections \S\ref{shriek}--\ref{2nd diag}, we treat those elements involving $(-)^!$ by using the twisted inverse\kern.5pt-image pseudofunctor from Grothendieck duality. The twisted inverse\- image is generally defined only for bounded-below complexes. But we want a pseudofunctor with values on
all of $\D_W$. (For instance, we have in mind Hochschild homology, which involves complexes that are bounded above, not below.) That is why we restrict in the examples~\ref{examples}(a) and (b) to maps of finite tor-dimension:   the  twisted inverse image functor $f_{\<\<\upl}^!\>$ that is attached to such a map $f\colon X\to Y$  \emph{extends} to a functor $f^!\colon \Dqc(Y)\to \Dqc(X)$ with
$$
f^!\<C:= f_{\<\<\upl}^!\OY\Otimes{\sX}\LL f^*\<C \qquad (C\in\Dqc(Y)).
$$
``Extends" means that for cohomologically bounded-below $C\in\Dqc(Y)$,  there is a canonical functorial isomorphism
$$
f^!C\iso f_{\<\<\upl}^!C.
$$
(For case (a), see \cite[5.9]{Nk}; for (b),  \cite[4.7.2]{li}).
This extension can be made pseudofunctorial (\S\ref{shriek}), and $H\<$-graded,
the latter as a consequence of the compatibility of $\Otimes\sX$ and $\Lf$ with the $H_\sX\<$-grading on $\bE_\sX$ (Propositions~\ref{tensor graded} and ~\ref{* and T}).

In \S\ref{bchadmirado} we associate to each independent square $\Dd$ a base-change isomorphism 
$\bchadmirado\Dd$ as in \S\ref{bchado}, for which the diagrams ~\eqref{basechange1}
and~ \eqref{basechange2} commute. In \S\ref{integral}, we associate to each confined map $f$ a  degree\kf-0 functorial map
$\couni{\<\<\!f}\colon f^{}_{\<\<*}f^!\to \id$ that satisfies transitivity (see \S\ref{f_*}).

We conclude by showing that with the preceding data, diagrams~\eqref{first diagram} 
and~\eqref{second diagram} commute, thereby establishing all the properties of a setup. 
\end{subcosa}

\end{cosa}

\begin{cosa}\label{D_W}

Let $f\colon X\to Y$ be a ringed-space map. For any object $C$ in $\bE_Y$, denote the derived inverse image $\LL f^*\<C\in\bE_\sX$ simply by $f^*\<C$.
(Despite this notation, it should not be forgotten that we will be dealing throughout with derived functors.) To any map $\gamma\colon C\to D[i\>]$ in $\bE_Y^i(C,D)$ assign the map
$$
f^*\<\<\gamma\colon f^*\<C\xto{\LL f^*\<\<\gamma\>\>}f^*\<\big(D[i\>]\big)=(f^*\<\<D\big)[i\>]
$$
in $\bE_\sX^i(f^*\<C,f^*\<\<D)$.  Using  functoriality of the  isomorphism  represented by ``="  (see \S\ref{prelim}), one checks that this assignment is compatible with composition in $\bE_Y$ and~$\bE_\sX$; so one gets
 a $\ZZ\>$-graded functor $f^*\colon \bE_Y\to \bE_\sX$.

In a similar manner,  the derived direct image functor $\R f^{}_{\<\<*}$ gives rise to a $\ZZ\>$-graded 
functor $f^{}_{\<\<*}\colon\bE_\sX\to \bE_Y$.\va1

\begin{subprop}\label{adjunction} 
There is an adjunction $f^*\<\dashv f^{}_{\<\<*}\>$ for which the corresponding unit and counit maps
\begin{equation*}
\eta\colon\id \to f^{}_{\<\<*}f^*\quad\text{ and }\quad\epsilon\colon f^*\<\<f^{}_{\<\<*}\to\id 
\end{equation*}
are degree-$\>0$ maps of\/  $\>\>\ZZ$-graded functors.
\end{subprop}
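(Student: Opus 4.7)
The plan is to take $\eta := \bar\eta$ and $\epsilon := \bar\epsilon$ from \eqref{baretaeps}, viewed pointwise as degree-$0$ elements of the enriched graded hom-groups, and then to verify graded naturality plus the triangle identities.

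First, observe that each component $\bar\eta_A\colon A\to\R f_*\LL f^*A$ is an ordinary $\D(Y)$-morphism, hence lies in $\bE_Y^0(A, f_*f^*A)$; likewise $\bar\epsilon_C\in\bE_X^0(f^*f_*C, C)$. So both families are degree-$0$ in the graded sense. To promote $\eta$ to a degree-$0$ functorial map between the $\ZZ$-graded functors $\id$ and $f_*f^*$, I need commutativity---with sign $(-1)^{0\cdot m}=+1$---of the square in \eqref{(-1)^mn} for each $\alpha\in\bE_Y^m(A,B)$. Writing $\alpha$ as a $\D(Y)$-morphism $A\to B[m]$ and using the canonical translation-compatibility isomorphisms recalled in \S\ref{prelim} to identify $f_*f^*(B[m])$ with $(f_*f^*B)[m]$, the required square is exactly the ordinary naturality square of the classical derived unit $\bar\eta$ applied to the $\D(Y)$-morphism $\alpha$. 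The argument for $\epsilon$ is symmetric.

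Next, the triangle identities hold at the $\D$-level for $\bar\eta$ and $\bar\epsilon$ by the classical derived adjunction $\LL f^*\dashv\R f_*$ (see \cite[3.2.3]{li}), and because the graded enrichment of \S\ref{D_W} does not alter the underlying morphisms, they persist in $\bE$. From this the graded composite maps
\[
\bE_X^i(f^*A,C)\to\bE_Y^i(A,f_*C)\to\bE_X^i(f^*A,C)\qquad(i\in\ZZ),
\]
and their reverses, are identities on each degree, yielding degree-preserving $\ZZ$-linear inverse bijections between $\bE_X(f^*A,C)$ and $\bE_Y(A,f_*C)$, i.e.\ the desired graded adjunction.

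The only real obstacle is careful bookkeeping around translations: whenever an expression such as $f_*f^*(B[m])$ appears in a diagram, we must use the canonical isomorphism of \S\ref{prelim} to view it as $(f_*f^*B)[m]$, and conversely. This coherence is standard for derived functors and their units and counits---specifically, it follows from the fact that the isomorphisms $\R f_*\smallcirc T_X\iso T_Y\smallcirc\R f_*$ and $\LL f^*\smallcirc T_Y\iso T_X\smallcirc\LL f^*$ are themselves natural and compatible with $\bar\eta,\bar\epsilon$---so no new ingredient beyond the classical derived adjunction is required.
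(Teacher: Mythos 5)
Your overall strategy coincides with the paper's: take $\eta=\bar\eta$ and $\epsilon=\bar\epsilon$ componentwise, note that each component is an ordinary derived-category map and hence of degree $0$, deduce the triangle identities from the classical adjunction $\LL f^*\dashv\R f_*$, and reduce graded naturality to the compatibility of $\bar\eta$ (resp.\ $\bar\epsilon$) with the translation isomorphisms. The difficulty is your treatment of that last point, which is in fact the entire substance of the proof. You write that the needed coherence ``follows from the fact that the isomorphisms $\R f_*\smallcirc T_\sX\iso T_Y\smallcirc\R f_*$ and $\LL f^*\smallcirc T_Y\iso T_\sX\smallcirc\LL f^*$ are themselves natural and compatible with $\bar\eta,\bar\epsilon$.'' But the compatibility of those isomorphisms with $\bar\eta$ and $\bar\epsilon$ is precisely the statement that has to be proved --- it is subdiagram \circled2 of diagram \eqref{compid} --- so this justification is circular. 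Naturality of the translation isomorphisms alone does not give it: $\bar\eta^{}_{D[i\>]}$ and $\bar\eta^{}_D[i\>]$ are a priori two different maps from $D[i\>]$ to $f^{}_{\<\<*}f^*(D[i\>])\cong (f^{}_{\<\<*}f^*D)[i\>]$, and nothing in the bare statement of the adjunction forces them to agree.

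The paper closes this gap with an actual argument: replace $D$ by a q-flat resolution so that the derived and underived inverse images agree, assemble a cube whose front face is the square in question and whose rear face involves only underived functors (where the identity is immediate from the construction of the unit $\id\to\tilde f^{}_{\<\<*}f^*$); the top and left faces commute by \cite[3.2.1.3]{li}, and the right face is handled by a q-injective replacement of $f^*\<\<D$. The counit is treated symmetrically using \cite[3.2.1.2]{li}. Some such computation at the level of genuine complexes (or an explicit citation of this exact compatibility) is unavoidable here; as written, your proposal asserts the key step rather than proving it.
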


\pagebreak[3]

\begin{proof} \!\!Let $\eta^{}_C\<\<\in\bE_Y^0(C, f^{}_{\<\<*}f^*\<C)$  be\va1 the $\D(Y)$-map 
$\bar\eta^{}_C\colon \<C\<\to \<f^{}_{\<\<*}f^*\<C$ (see~\!\eqref{baretaeps}\!) and 
$\epsilon^{}_{\!A}\in\bE_\sX^0(f^*\<\<f^{}_{\<\<*}A, A)$  the $\D(X)$-map
$\bar\epsilon^{}_{\!A}\colon f^*\<\<f^{}_{\<\<*}A\to A$. \va1

That the compositions
$$
f^{}_{\<\<*}A \xto{\eta^{}_{\<f^{}_{\<\<*}{\<A}}\>\>}  f^{}_{\<\<*}f^*\<\<f^{}_{\<\<*}A \xto{f^{}_{\<\<*}\epsilon^{}_{\!A}\>\>} f^{}_{\<\<*}A,\quad
f^*\<C \xto{f^*\<\eta^{}_{C}\>} f^*\<\<f^{}_{\<\<*}f^*\<C \xto{\epsilon^{}_{\<\<f^*\<C}} f^*\<C
$$
are identity maps
follows from the corresponding properties of $\bar\eta$ and $\bar\epsilon$.
It remains then to show that
the family $\eta^{}_C\ (C\in\bE_Y)$ (resp.~$\epsilon^{}_{\!A}\ (A\in\bE_\sX)$) constitutes a
degree\kern.5pt-0 map of graded  functors. For $\eta^{}_C$ this means that for any $\D(Y)$-map
$\gamma\colon C\to D[i\>]\ (i\in\ZZ)$ the following $\D(Y)$-diagram commutes:
\begin{equation}\label{compid}
\CD
C     @>\gamma>> D[i\>]  @>\bar\eta^{}_{\<D}[i\>]>>(f^{}_{\<\<*}f^*\<D)[i\>] \\
@V\bar\eta^{}_{C} V\mkern65mu\mbox{\circled1}V    @V\bar\eta^{}_{\<D[i\>]\<} 
V\mkern78mu\mbox{\circled2}V  @|  \\
f^{}_{\<\<*}f^*\<C  @>> \Rf\LL f^*\gamma\>>  f^{}_{\<\<*}f^*\mkern-1.5mu\big(D[i\>]\>\big) 
      @= f^{}_{\<\<*}\big((f^*\<D)[i\>]\>\big)
\endCD
\end{equation}

Commutativity of subdiagram \circled1 is clear.  

For commutativity of \circled 2, replace~$D$ by a quasi-isomorphic q-flat complex, and note that the natural map from the derived inverse image to the underived inverse image of $D$ is then an isomorphism, see \cite[paragraph surrounding 2.7.3.1]{li}. Then, with $\tilde f^{}_{\<\<*}$~denoting the underived direct-image functor, consider the following cube, in which the front face is~\circled2 and the maps are the natural ones:
\[
  \bpic[xscale=1.6, yscale=1.25]

   \draw (0,-1) node (11){$$};
   \draw (1.05,-1) node (12){$D[i\>]$};
   \draw (2,-1) node (13){};
   \draw (3,-1) node (14){$(\tilde f^{}_{\<\<*}f^*\<\<D)[i\>]$};
    
   \draw (0,-2) node (21){$D[i\>]$};
   \draw (1.05,-2) node (22){};
   \draw (2,-2) node (23){$(\R\tilde f^{}_{\<\<*}f^*\<\<D)[i\>]$};
   \draw (3,-2) node (24){};
    
   \draw (0,-3) node (31){$$};
   \draw (1.05,-3) node (32){$\tilde f^{}_{\<\<*}f^*\<\big(D[i\>]\big)$};
   \draw (2,-3) node (33){};
   \draw (3,-3) node (34){$\tilde f^{}_{\<\<*}\<\big((f^*\<\<D)[i\>]\big)$};
   
   \draw (0,-4) node (41){$\R\tilde f^{}_{\<\<*}f^*\<\big(D[i\>]\big)$};
   \draw (1.05,-4) node (42){};
   \draw (2,-4) node (43){$\R\tilde f^{}_{\<\<*}\<\big((f^*\<\<D)[i\>]\big)$};
   \draw (3,-4) node (44){}; 
   
  
   \draw[->] (12)--(14);
  
   \draw[->] (21)--(23);

   \draw[-, double distance=2pt] (32)--(33);
   \draw[-, double distance=2pt] (33)--(34);

   \draw[-, double distance=2pt] (41)--(43);

 
   \draw[-] (12)--(22);
   \draw[-, double distance=2pt](14)--(34) ;

   \draw[->](22)--(32) ;
   \draw[->] (21)--(41) ;
   \draw[-, double distance=2pt] (23)--(43) ;

 
   \draw [-, double distance=2pt] (12) -- (21) ;
   \draw[->] (14)--(23);
   
   \draw[->] (32)--(41);
   \draw[->] (34)--(43);

      
     \epic
\]
Commutativity of the bottom face is clear. Commutativity of the top and left faces results from \cite[3.2.1.3]{li}. To make commutativity of the right face clear, replace the complex $f^*\<\<D$ by 
a quasi-isomorphic q-injective complex~$J$, and note that the canonical map 
$\tilde \fst J\to\R \tilde \fst J$ is a $\D(Y)$-isomorphism (see \cite[2.3.5]{li}).
Commutativity of the rear face, which involves only
underived functors, is an easy consequence of the definition of the standard functorial map
$\id\to \tilde f^{}_{\<\<*}f^*\<$. Commutativity of the front face follows from that of the others.

An analogous argument, using \cite[3.2.1.2]{li}, applies to the family $\epsilon^{}_{\!A}\>$.
\end{proof}

\begin{subcor}\label{pseudofunctoriality}
There exist pseudofunctorially adjoint\/ $\ZZ$-graded 
pseudofunctors that associate the  functors\/ $f^*$ and\/ $\fst$ to any $\SS$\kf-map\/ $f\colon X\to Y$.
\end{subcor}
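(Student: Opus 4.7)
The plan is to show that the classical pseudofunctoriality data for the derived functors $\LL(-)^*$ and $\R(-)_*$ on the $\D$-level lifts straightforwardly to the $\ZZ$-graded enrichments $(-)^*$ and $(-)_{\<*}$ constructed in~\ref{D_W}. For composable ringed-space maps $X\xto{f}Y\xto{g\>}Z$, I would start from the classical functorial isomorphisms $\bar\psi^*\colon\LL f^*\LL g^*\iso\LL(gf)^*$ and $\bar\psi_*\colon\R(gf)_*\iso\R g_*\R f_*$ (see, e.g., \cite[3.6.10]{li}). At each object, $\bar\psi^*_C$ is a $\D(X)$-isomorphism and hence defines a degree-$0$ element of the relevant graded $\bE$-module; what one has to check is that the whole family qualifies as a \emph{degree-$0$ functorial isomorphism} between the graded functors $f^*g^*$ and $(gf)^*$ (respectively $(gf)_*$ and $g_*f^{}_{\<\<*}$).

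By the definition of graded functorial map~\eqref{(-1)^mn}, this amounts to naturality of $\bar\psi^*_C$ with respect to arrows $\gamma\in\bE_Z^i(C,D)$, i.e., to commutativity (with sign $(-1)^{0\cdot i}=1$) of
$$
\CD
f^*g^*C @>\bar\psi^*_C>> (gf)^*C \\
@V f^*g^*\gamma VV @VV (gf)^*\gamma V \\
f^*g^*(D[i\>]) @>>\bar\psi^*_{D[i\>]}> (gf)^*(D[i\>])
\endCD
$$
Using the canonical identifications $f^*(D[i\>])=(f^*\<\<D)[i\>]$ and similarly for $g^*$ and $(gf)^*$ (already invoked in~\ref{prelim} and established by the compatibility of $\LL(-)^*$ with translation), this reduces to ordinary naturality of $\bar\psi^*$ in the derived category. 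The argument for $\bar\psi_*$ is entirely parallel. Both associativity pentagons~\eqref{assoc} then commute in the enrichment because they commute in the underlying derived categories and every arrow involved is in degree~$0$, where the enrichment adds no new content.

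For the pseudofunctorial adjointness, I would invoke the degree-$0$ unit $\eta$ and counit $\epsilon$ furnished by Proposition~\ref{adjunction}, together with $\bar\psi^*$ and $\bar\psi_*$: commutativity of the analog of~\eqref{adjpseudo} involves only these degree-$0$ arrows, and so is inherited from the classical adjoint-pseudofunctor identity in $\D$. The main obstacle I expect is the careful bookkeeping of the translation identifications---verifying that the pseudofunctor isomorphisms $\bar\psi^*$ and $\bar\psi_*$ commute with the canonical isomorphisms $\LL f^*\<\smallcirc T_Y\iso T_\sX\<\smallcirc \LL f^*$ and $\R f^{}_{\<\<*}\smallcirc T_\sX\iso T_Y\<\smallcirc \R f^{}_{\<\<*}$. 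This is the same kind of cube-diagram check that appears in the proof of Proposition~\ref{adjunction} (and is implicit in Proposition~\ref{* and T}); once it is recorded, the rest of the corollary is a purely formal transport of classical derived-category structure to the $\ZZ$-graded setting.
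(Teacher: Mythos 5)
Your proposal follows essentially the same route as the paper: both lift the classical isomorphisms $\overline{\ps}{}^*$ and $\overline{\ps}_*$ to the graded setting, reduce degree-$0$ functoriality to compatibility with the translation identifications (the paper invokes this as the statement that $\overline{\ps}_*$ is a map of $\Delta$-functors, citing \cite[2.2.7]{li}, which is exactly the bookkeeping you flag as the main point to check), and then inherit the associativity and adjointness diagrams from the underlying derived-category pseudofunctors via \cite[3.6.10]{li}. The argument is correct as proposed.
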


\begin{proof}
For any $X\xto{f}Y\xto{g}Z$ in $\SS$, there are functorial  isomorphisms 
$$
\ps_*\colon (gf)_*\iso g^{}_*\<\fst,\qquad \ps^*\colon f^*\<\<g^*\iso(gf)^*
$$
such that for $A\in\bE_\sX$, the corresponding map $(gf)_*A\iso g^{}_*\fst A$ is the natural 
$\D(Z)$-isomorphism $\>\overline{\<\ps\<\<}\>\>_*\colon\R(gf)_*A\iso \R g^{}_*\Rf A$, and such that for $C\in\bE_Z$, the corresponding map $f^*\<\<g^*C\iso (gf)^*C$ is the natural 
$\D(X)$-isomorphism $\>\overline{\<\ps\<\<}\>\>^*\colon\LL f^*\LL g^*C\iso\LL (gf)^*C$.  Now,
$\>\overline{\<\ps\<\<}\>\>_*$ is a map of so-called $\Delta$-functors (see \cite[2.2.7]{li}); and it follows readily that $\ps_*$ is of degree 0.  A similar argument applies to $\ps^*\<$.

That the first diagram in \eqref{assoc} commutes, as does its analog for $(-)_*$, follows from the corresponding facts for the pseudofunctors
$\LL (-)^*$ and $\R(-)_*$. Hence $\ps^*$ makes $(-)^*$ into a contravariant $\ZZ\>$-graded pseudofunctor, and $\ps_*$ makes $(-)_*$ into a covariant $\ZZ\>$-graded pseudofunctor. 
The adjointness of these pseudofunctors, that is, commutativity of~ 
\eqref{adjpseudo}, results from that of the corresponding diagram for the adjoint pseudofunctors
$\LL(-)^*$ and $\R(-)_*$ (see \cite[3.6.10]{li}).
\end{proof}

 From  \cite[3.9.5]{li}), one gets:
\begin{subprop}\label{theta2}
With $f^*\dashv \fst$  as above, 
for any independent $\SS$-square
\begin{equation*}\CD
\bullet @>v>> \bullet\\
@V g VV @VV f V \\
\bullet@>{\vbox to 0pt{\vss\hbox{$\sst\Dd$}\vskip.21in}} > \lift1.2,u, >\bullet
\endCD
\end{equation*}
the map $\theta_{\<\Dd}\colon u^*\!\fst\to g_*v^*$ in \S\ref{theta} is a functorial isomorphism
of degree\/ $0$.

\begin{proof}
That $\theta_\Dd$ has degree 0 results from the fact that it is a composition of three functorial maps
$$
u^*\!\fst\xto{\eta^{}_g\>} g_*\>g^*u^*\!\fst\overset{\ps^*}{=\!=}g_*v^*\!f^*\!\fst\xto{\epsilon^{}_{\!f}\>} g_*v^*
$$
all of which are of degree 0 (see ~\ref{adjunction} and the proof of~\ref{pseudofunctoriality}).

The rest is clear. 
\end{proof}\end{subprop}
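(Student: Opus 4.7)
The plan is to separate the two assertions of Proposition~\ref{theta2} — that $\theta_\Dd$ has degree $0$ and that it is a functorial isomorphism — and handle them independently.

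For the degree-$0$ claim, I would read off~\eqref{def-of-bch-asterisco} the factorization
$$
u^*\fst \xto{u^*\fst\eta_v} u^*\fst v_*v^* \overset{\ps_*}{=\!=} u^*u_*g_*v^* \xto{\epsilon_u} g_*v^*.
$$
Each of the three constituent functorial maps is of degree $0$: the unit and counit of the adjunction $f^* \dashv \fst$ are degree $0$ by Proposition~\ref{adjunction}, and the pseudofunctorial composition isomorphism $\ps_*$ is degree $0$ by Corollary~\ref{pseudofunctoriality}. Since $u^*,\fst, u^*u_*g_*$, etc., are $\ZZ$-graded functors, whiskering preserves degree; composition of degree-$0$ transformations is degree $0$; hence $\theta_\Dd$ is degree $0$.

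For the isomorphism property, I would observe that once the $H$-graded enrichment is forgotten, every ingredient assembling $\theta_\Dd$ reverts to its classical derived-category counterpart: by the construction of \S\ref{D_W}, $u^*$ and $\fst$ are simply $\LL u^*$ and $\R f_*$, the unit and counit are $\bar\eta$ and $\bar\epsilon$ of~\eqref{baretaeps}, and $\ps_*$ is the standard pseudofunctoriality isomorphism $\R(gf)_*\iso \R g_*\R f_*$ (see the proof of Corollary~\ref{pseudofunctoriality}). Thus $\theta_\Dd$ is identified with the classical base-change morphism $\bar\theta_\Dd\colon \LL u^*\R f_* \to \R g_*\LL v^*$, and being an isomorphism in the $H$-graded category $\D_W$ is the same as being an isomorphism of the underlying $\D(W)$-maps.

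It remains only to invoke an appropriate base-change theorem in each of the two settings of~\ref{examples}. In case~(a), $\Dd$ is a fiber square with flat bottom arrow $u$, which forces tor-independence of $\Dd$; combined with the restriction to $\Dqc$, this is precisely the hypothesis of~\cite[Proposition~3.9.5]{li}, which yields that $\bar\theta_\Dd$ is an isomorphism on $\Dqc$. In case~(b), all oriented fiber squares are declared independent but $f$ is a composite of \'etale and flat quasi-proper maps; the quasi-proper piece is handled by the base-change theorem of~\cite[\S4.7]{li} and the \'etale piece is standard flat base change, and one assembles the two factors. The only mild obstacle is the case-(b) bookkeeping, but no essentially new input is required beyond what is collected in~\cite[Chapters~3--4]{li}.
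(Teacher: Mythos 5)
Your proof is correct and follows essentially the same route as the paper's: the degree-$0$ assertion is obtained by decomposing $\theta_\Dd$ into degree-$0$ constituents (you use the definitional factorization via $\eta_v$, $\ps_*$, $\epsilon_u$ from~\eqref{def-of-bch-asterisco}, while the paper writes the equivalent one via $\eta_g$, $\ps^*$, $\epsilon_f$), and the isomorphism assertion reduces to the classical tor-independent base-change theorem \cite[3.9.5]{li}, which the paper cites immediately before the proposition. One small simplification for case~(b): since every map there is flat, every fiber square is automatically tor-independent and \cite[3.9.5]{li} applies uniformly, so the detour through the base-change results of \cite[\S4.7]{li} is unnecessary (and somewhat off target, as those concern $(-)^!$ whereas $\theta_\Dd$ involves only $(-)^*$ and $(-)_*$).
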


\end{cosa}

\begin{cosa}
For a scheme $(X,\OX)$, if $A$ and $B$ are $\OX$-complexes and $i,j,n\in\ZZ$, then since
$$
\big(A[i\>]\otimes_\sX B[\>j\>]\big){}^{\<n}=\bigoplus_{p+q=n+i+j}\!A^p\otimes_\sX B^q
= \big(A\otimes_\sX \<B\big)[i\<+\<j\>]^n,
$$
therefore there is a unique isomorphism of graded $\OX$-modules 
$$
\vartheta'_{ij}\colon A[i\>]\otimes_X B[\>j\>]\iso \big(A\otimes_\sX B\big)[i\<+\<j\>]
$$
whose restriction to $A^p\otimes_\sX B^q\ (p,q\in\ZZ)$ is multiplication by $(-1)^{(p-i)j}$.
One~checks that $\vartheta'_{ij}$ is actually a bifunctorial isomorphism of $\OX$-complexes.

\begin{sublem} \label{vartheta}
For any\/ $i,j\in\ZZ$ there exists  a unique bifunctorial isomorphism\/ $\vartheta_{ij}$
such that for any\/ $\OX$-complexes\/ $A$ and\/ $B,$ the following diagram in~$\D(X)$ commutes.
$$
\CD
A[i\>]\Otimes{\sX} B[\>j\>] @>\vartheta_{ij} >> \big(A\Otimes{\!X} B\big)[i\<+\<j\>]\\
@V\textup{canonical} VV @VV\textup{canonical} V\\
A[i\>]\otimes_\sX\< B[\>j\>] @>> \lift1.4,\vartheta'_{\<\<ij},>  \big(A\otimes_\sX\< B\big)[i\<+\<j\>]
\endCD
$$
\end{sublem}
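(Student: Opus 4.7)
The plan is to construct $\vartheta_{ij}$ via q-flat resolutions, which link the underived and derived tensor products. Fix q-flat resolutions $\pi^{}_{\<\<A}\colon P\to A$ and $\pi^{}_B\colon Q\to B$. Since the shift functor is an exact autoequivalence on $\mathcal O_X$-complexes, $\pi^{}_{\<\<A}[i]\colon P[i]\to A[i]$ and $\pi^{}_B[j]\colon Q[j]\to B[j]$ are again q-flat resolutions, and $P\otimes_X Q$ is q-flat. Hence the canonical $\D(X)$-maps $P[i]\otimes_X Q[j]\to A[i]\Otimes{\sX}B[j]$ and $(P\otimes_X Q)[i+j]\to (A\Otimes{\sX}B)[i+j]$ are both isomorphisms.

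With these identifications in hand, one defines $\vartheta_{ij}$ as the $\D(X)$-composition
\[
A[i]\Otimes{\sX}B[j]\osi P[i]\otimes_X Q[j]\xto{\vartheta'_{ij}(P,Q)}(P\otimes_X Q)[i+j]\iso (A\Otimes{\sX}B)[i+j].
\]
This is an isomorphism because $\vartheta'_{ij}(P,Q)$ is already an isomorphism of genuine complexes. Independence from the choice of $(P,Q)$, and hence bifunctoriality in $(A,B)$, follows from the standard argument comparing two q-flat resolutions via a common refinement, together with bifunctoriality of $\vartheta'_{ij}$ at the level of complexes.

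Commutativity of the displayed square then reduces to the observation that the ``canonical'' vertical maps factor, up to the identifications above, through the natural maps $\pi^{}_{\<\<A}[i]\otimes_X\pi^{}_B[j]$ and $(\pi^{}_{\<\<A}\otimes_X\pi^{}_B)[i+j]$; the square becomes an instance of naturality of $\vartheta'_{ij}$ with respect to the pair $(\pi^{}_{\<\<A}[i],\pi^{}_B[j])$, which holds by construction at the complex level.

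For uniqueness: when $A$ and $B$ are themselves q-flat the vertical canonical arrows are identities in $\D(X)$, so the diagram forces any candidate to agree with $\vartheta'_{ij}$; bifunctoriality, together with the existence of q-flat resolutions for arbitrary complexes, then pins down $\vartheta_{ij}$ on all of $\D(X)\times\D(X)$. The main point requiring care is the independence of $\vartheta_{ij}$ from the chosen q-flat resolutions, a routine verification of the same kind as is already built into the construction of $\Otimes{\sX}$ itself.
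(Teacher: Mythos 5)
Your argument is correct and is essentially the paper's: both define $\vartheta_{ij}$ by transporting $\vartheta'_{ij}$ through q\kern.5pt-flat resolutions, using that for q\kern.5pt-flat complexes the canonical map from $\Otimes{\!X}$ to $\otimes_\sX$ is an isomorphism; the paper merely delegates the bookkeeping (independence of resolutions, bifunctoriality, uniqueness) to the general construction of maps of derived multifunctors in \cite[2.6.5]{li}, which you carry out by hand. One small imprecision: for q\kern.5pt-flat $A,B$ the vertical canonical arrows are isomorphisms in $\D(X)$, not identities, but this does not affect your uniqueness argument.
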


\noindent\emph{Proof.} The idea is to apply $\vartheta'_{\<\<ij}$  to suitable q-flat resolutions of $A$ and $B$. \va1

More precisely, every $\OX$-complex is the target of a quasi-isomorphism from a q-flat complex,
and for q-flat complexes the canonical functorial map from the derived tensor product 
 $\Otimes{\!X}$  to  the ordinary tensor product $\otimes_\sX$ is an isomorphism \cite[\S2.5]{li}; hence the assertion follows from \cite[2.6.5]{li} (a~general method for  constructing maps of derived multifunctors), dualized---i.e., with arrows reversed, in which, with abbreviated notation, take 
 \begin{itemize}
\item $L_1''=L_2''$ to be the homotopy category $\mathbf K(X)$ 
of $\OX$-complexes, 
\item $L_k'\subset L_k''\ (k=1,2)$ the full subcategory whose objects are the q-flat complexes,
\item $\bE\set\D(X)$, 
\item $H$ the  functor taking 
$(A,B)\in L_1''\times L_2''$ to $\big(A\otimes_\sX B\big)[i\<+\<j\>]\in\D(X)$
(and acting in the obvious way on arrows), 
\item $G$ the  functor 
$(A,B)\in \D(X)\times \D(X)\mapsto A[i\>]\Otimes{\!X} B[\>j\>]\in\D(X)$,
\item $F$ the functor  
$(A,B)\in \D(X)\times \D(X)\mapsto\big(A\Otimes{\!X} B\big)[i\<+\<j\>]\in\D(X)$,
\item $\zeta\colon F\to H$ the canonical functorial map, and
\item  $\beta\colon G\to H$ the canonical functorial composite\va{-3}
\begin{xxalignat}{3}
&{\phantom{\square\quad}}
&A[i\>]\Otimes{\!X}\<\< B[\>j\>]\lto A[i\>]\otimes_\sX\<\< B[\>j\>]\xto{\vartheta'_{\<\<ij}} 
\big(A\otimes_\sX \<\<B\big)[i\<+\<j\>].
&&\qed
\end{xxalignat}

\end{itemize}
\end{cosa}

\begin{prop}\label{E is H-graded}
The ring
$$
H_\sX\set\bE_\sX(\OX,\OX)=\oplus_{i\ge0}\,\ext^i_\sX\<(\OX,\OX\<)
\cong \oplus_{i\ge0}\,\textup{H}^i(X\<\<,\OX\<)
$$
is  canonically a graded-ring retract of the graded center\/ $\CC_{\bE_\sX}\<.$ Hence\/ $H_\sX\<$ is graded-commutative, and\/
$\bE_\sX$ \emph{is $H_\sX\<$-graded.}\va1
\end{prop}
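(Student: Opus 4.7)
The plan is to exhibit $(\Otimes{X}, \OX, \lambda, \rho)$ as a \emph{unital product} on $\bE_X$ in the sense of \S\ref{unital}, and then invoke \S\ref{unital->gradedcomm} directly: that subsection produces, out of any unital product, a graded-ring homomorphism $\bE(\CO,\CO)\to\CC$ right-inverse to the evaluation map \eqref{evaluation}, realizing $\bE(\CO,\CO)$ as a graded-commutative graded-ring retract of~$\CC$, and inducing a $\bE(\CO,\CO)$-grading on~$\bE$ from the $\CC$-grading described in \S\ref{graded center}. Applied with $\CO=\OX$, this gives at once everything the proposition asserts.

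First I would extend $\Otimes{X}\colon\D(X)\times\D(X)\to\D(X)$ to a $\ZZ$-graded functor
$\Otimes{X}\colon\bE_X\botimes\ZZ\bE_X\to\bE_X$. On objects this is just the derived tensor product, and on morphisms, for $\alpha\in\bE_X^m(A,B)$ and $\beta\in\bE_X^n(C,D)$ represented by $\D(X)$-maps $A\to B[m]$ and $C\to D[n]$, set $\alpha\Otimes{X}\beta\in\bE_X^{m+n}(A\Otimes{X}C,\,B\Otimes{X}D)$ to be the composite
\[
A\Otimes{X}C\xto{\alpha\Otimes{X}\beta} B[m]\Otimes{X}D[n]\xto{\vartheta_{mn}}
\big(B\Otimes{X}D\big)[m+n],
\]
with $\vartheta_{mn}$ from Lemma~\ref{vartheta}. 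The content of the verification is the identity
\[
(\beta_1\Otimes{X}\beta_2)\ssscirc(\alpha_1\Otimes{X}\alpha_2)=(-1)^{n_2m_1}(\beta_1\ssscirc\alpha_1)\Otimes{X}(\beta_2\ssscirc\alpha_2)
\]
for $\alpha_i,\beta_i$ of degrees $m_i,n_i$; this matches the sign demanded by the definition of $\bE_X\botimes\ZZ\bE_X$ in \S\ref{unital}, and the sign $(-1)^{n_2m_1}$ is exactly what is produced by $\vartheta$ when one slides a degree-$m_1$ map past a degree-$n_2$ shift, by the formula for $\vartheta'_{ij}$ in Lemma~\ref{vartheta}.

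Second, take $\CO\set\OX$, and let $\lambda\colon\OX\Otimes{X}-\iso\id$ and $\rho\colon-\Otimes{X}\OX\iso\id$ be the standard unit isomorphisms of the derived tensor product on $\D(X)$. These are degree-$0$ functorial isomorphisms (by naturality in the ambient derived category), and it is immediate that $\lambda_{\OX}=\rho_{\OX}$. Hence $(\Otimes{X},\OX,\lambda,\rho)$ is a unital product on the preadditive (that is, $\ZZ$-graded) category $\bE_X$.

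Finally, apply \S\ref{unital->gradedcomm} with $\bE=\bE_X$, $H=\ZZ$, $\CO=\OX$: for each $n$ the assignment sending $\eta\in\bE_X^n(\OX,\OX)$ to the family $(\eta_A)_{A\in\bE_X}$ with
\[
\eta_A\colon A\underset{\lambda_A^{-1}}\iso\OX\Otimes{X}A\xto{\eta\Otimes{X}\id_A}\OX\Otimes{X}A\underset{\lambda_A}\iso A
\]
is a graded-ring homomorphism $H_X\to\CC_{\bE_X}$ right-inverse to $\textup{ev}_{\OX}$. Thus $H_X$ is a graded-ring retract of $\CC_{\bE_X}$, and the rest of the conclusion (graded-commutativity of~$H_X$, and the $H_X$-grading on $\bE_X$ obtained by restricting scalars along this retraction from the natural $\CC_{\bE_X}$-grading of~$\bE_X$ discussed in \S\ref{graded center}) follows formally, since the graded center of any $\ZZ$-graded category is graded-commutative. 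The main obstacle is the sign bookkeeping in the first step: one must check that the Koszul signs produced by $\vartheta_{mn}$ align precisely with the sign built into the tensor-product-of-graded-categories construction of \S\ref{unital}, and that this suffices to make $\alpha\Otimes{X}\beta$ bifunctorial on $\bE_X$; everything after that is a direct appeal to the general machinery already in place.
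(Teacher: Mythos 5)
Your proposal is correct and follows essentially the same route as the paper: construct the $\ZZ$-graded unital product $(\totimes,\OX,\lambda,\rho)$ on the preadditive category $\bE_\sX$ via the derived tensor product and the shift-isomorphisms $\vartheta_{ij}$ of Lemma~\ref{vartheta}, verify the Koszul-sign compatibility with the composition law in $\bE_\sX\botimes\ZZ\bE_\sX$, and then invoke \S\ref{unital->gradedcomm}. The paper likewise leaves the sign verification to the reader, so your identification of that as the only substantive check is exactly on target.
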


\begin{proof} By \S\ref{unital->gradedcomm}, the assertion follows from the existence of a unital product\/  
$(\totimes\!,\OX,\lambda,\rho)$---to be constructed---on the\/ preadditive category\/~$\bE_\sX$.\va1

Define a $\ZZ\>$-graded functor 
\begin{equation}\label{totimes}
\totimes\!\colon \bE_\sX\!\botimes\ZZ\bE_\sX\to\bE_\sX 
\end{equation}
as follows. (Notation will be as in \S\ref{unital}.)\va1

First, for any object $(A,B)\in\bE_\sX\!\botimes\ZZ\bE_\sX$,  
$A\totimes\<B\set\totimes\!(A,B)$ is the derived tensor product $A\Otimes{\!X}B$, 
which lies in $\bE_\sX$ \cite[p.\,64, 2.5.8.1]{li}. 

\penalty-2000
Next, the map taking $(\alpha^{}_1, \alpha^{}_2)\in\bE_\sX^i(A_1,B_1)\times \bE_\sX^j(A_2,B_2)$
to the map
$$
\alpha_1\totimes\alpha_2\in\bE_\sX^{i+j}(A_1\totimes A_2, B_1\totimes\<B_2)
$$ 
given by the
composite $\D(X)$-map 
$$
A_1\Otimes{\!X}A_2\xto{\alpha^{}_1\>\Otimes{\!X}\>\alpha^{}_2\>\>}B_1[i\>]\Otimes{\!X}B_2[\>j\>]
\xto[\eqref{vartheta}]{\vartheta_{ij}}\big(B_1\<\Otimes{\!X}B_2\big)[i+j\>]
$$
is $\ZZ\>$-bilinear, so factors uniquely through a map
$$
\totimes^{\!ij}\colon\bE_\sX^i(A_1,B_1)\otimes_\ZZ\bE_\sX^j(A_2,B_2)\to 
\bE_\sX^{i+j}(A_1\totimes A_2, B_1\totimes\<B_2)
$$
taking $\alpha_1\<\otimes\<\alpha_2$ to $\alpha_1\totimes\alpha_2\>$; and $\totimes^{\!ij}$  extends uniquely to a $\ZZ\>$-linear map
\begin{multline*}
\totimes\!\colon(\bE_\sX\botimes \ZZ\bE_\sX)\big((A_1,A_2),\:(B_1,B_2)\big)= \\
\bE_\sX(A_1,B_1)\otimes_\ZZ \>\bE_\sX(A_2,B_2)
\to 
\bE_\sX(A_1\totimes A_2\>, B_1\totimes\<B_2).
\end{multline*}

For functoriality, it needs to be checked that for all $A_1\xto{\alpha_1\,}B_1\xto{\beta_1\,}C_1$  and $A_2\xto{\alpha_2\,}B_2\xto{\beta_2\,}C_2$ in $\bE_\sX$,\va1 with
$\alpha_1\in\bE_\sX^{m_1}\<(A_1,B_1)$ and $\beta_2\in\bE_\sX^{n_2}(B_2,C_2)$, it holds that 
$$
(\beta_1\otimes \beta_2)\ssscirc(\alpha_1\otimes \alpha_2)= (-1)^{n^{}_2m^{}_1}(\beta_1\ssscirc\alpha_1)\otimes(\beta_2\ssscirc\alpha_2)\colon A_1\otimes A_2\to C_1\otimes C_2.
$$
This straightforward verification is left to the patient reader.

Specializing, one gets the $\ZZ\>$-graded endofunctor $\OX\totimes^{}-$ of $\bE_\sX$, taking an object
 $A\in\bE_\sX$ to $\OX\Otimes{\!X} A$, and a $\D(X)$-map $\alpha\colon A\to B[\>j\>]$ in 
 $\bE_\sX^j(A,B)$ to the composite $\D(X)$-map, in $\bE_\sX^j(\OX\totimes A,\OX\totimes\<B)$,
$$
\OX\Otimes{\!X} A\xto{\!\textup{via}\:\alpha\>\>} \OX\Otimes{\!X} B[\>j\>]
\overset{\vartheta_{\<0j}}{=\!\!=} \big(\OX\Otimes{\!X} B\big)[\>j\>].
$$
Similarly, one has the $\ZZ\>$-graded endofunctor $-\totimes\OX$.
There are obvious degree\kern.5pt-0 functorial isomorphisms
$$
\lambda\colon(\OX\totimes -)\iso \id_{\bE_\sX},\qquad\rho\colon (-\totimes \OX)\iso \id_{\bE_\sX}\<.
$$

It is immediate that $(\totimes\!,\OX,\lambda,\rho)$ is a unital product, so we are done.
\end{proof}

\begin{subcor}\label{E+graded}
Any full subcategory of\/ $\bE_\sX$ has an
$H_\sX\<\<$-grading,  inherited from the preceding one on\/ $\bE_\sX$.\qed
\end{subcor}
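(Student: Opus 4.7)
The plan is essentially to invoke Proposition \ref{E is H-graded} together with the general remark made near the end of \S\ref{H-graded} (just before \ref{H-algebra}), namely that any full subcategory of an $H$-graded category is automatically $H$-graded with its inherited structure. Since the statement is a formal consequence of structure already in place, the ``proof'' reduces to pointing out why the inherited data satisfy the axioms listed in \S\ref{H-graded}.

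More concretely, I would proceed as follows. Let $\bF$ be a full subcategory of $\bE_\sX$. For each pair of objects $A,B\in\bF$, we have $\bF(A,B)=\bE_\sX(A,B)$ by fullness, so $\bF(A,B)$ inherits the symmetric graded $H_\sX$-module structure provided by Proposition \ref{E is H-graded}. Composition in $\bF$ is the restriction of composition in $\bE_\sX$, hence is graded $H_\sX$-bilinear because the ambient composition is. This verifies conditions (i) and (ii) of the definition of an $H_\sX$-graded category given in \S\ref{H-graded}.

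The key (trivial) observation is that the action of $H_\sX$ on $\bE_\sX(A,B)$ is defined through the universal maps $\tau^{}_{\!A}$ and $\tau^{}_{\!B}$ of \S\ref{H-algebra}, which only involve the identity morphisms of $A$ and $B$ and composition with elements of $\bE_\sX(A,A)$ resp.~$\bE_\sX(B,B)$; all of this data lies inside $\bF$ when $A,B\in\bF$, so the restricted action is well-defined and identical to the one inherited by viewing $\bF(A,B)$ as a subset (in fact the whole) of $\bE_\sX(A,B)$.

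I do not anticipate any genuine obstacle: the corollary is a formal consequence of Proposition \ref{E is H-graded} combined with the already-noted stability of the notion of $H$-graded category under passage to full subcategories. In the article one could reasonably dispense with an explicit proof altogether, replacing it by the parenthetical remark ``by the observation at the end of \S\ref{H-graded}'' or simply a \textsc{qed}.
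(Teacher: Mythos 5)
Your proposal is correct and matches the paper exactly: Corollary~\ref{E+graded} is stated with no proof beyond \textsc{qed}, being an immediate consequence of Proposition~\ref{E is H-graded} together with the remark in \S\ref{H-graded} that any full subcategory of an $H$-graded category is naturally $H$-graded. Your explicit verification that the restricted hom-modules and composition inherit the required structure is exactly the routine check the paper leaves implicit.
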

\vskip1pt

The preceding $\ZZ\>$-graded unital product is in fact $H_\sX\<$-graded. This results from the following characterization of $H_\sX\subset \CC_{\bE_\sX}$.

\begin{prop}\label{tensor graded} 
With notation as in \ref{E is H-graded} and its proof, the following conditions on\/ 
$\xi\in \CC^n_{\bE_\sX}$ are equivalent:\va2

{\rm(i)} $\xi\in H^n_\sX=H^n(X\<,\OX)$.

{\rm(ii)} For all\/ $(\alpha, \beta)\in\bE_\sX^i(A,C)\times \bE_\sX^j(B,D),$ it holds that
$$
(\xi\>\alpha)\totimes\beta=\xi(\alpha\totimes\beta),\quad
\alpha\totimes(\beta \>\xi)=(\alpha\totimes\beta)\xi\>,\quad\textup{and}\quad
(\alpha\>\xi)\totimes\beta=\alpha\totimes(\xi\>\beta).
$$
\end{prop}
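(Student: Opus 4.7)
The plan is to reduce both directions of the equivalence to the characterization of the retract $\tau(H^n_\sX)\subset\CC^n_{\bE_\sX}$ supplied by Proposition~\ref{E is H-graded} and the construction in~\S\ref{unital->gradedcomm}: setting $\eta:=\xi_\OX\in H^n_\sX$, condition~(i) is equivalent to
\begin{equation*}
\xi_A=\lambda_A\ssscirc(\eta\totimes\id_A)\ssscirc\lambda_A^{-1}\qquad(A\in\bE_\sX). \tag{$*$}
\end{equation*}

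For the implication (ii)$\Rightarrow$(i), I would specialize the first identity of (ii) to $A=C=\OX$, $\alpha=\id_\OX$, and arbitrary $B=D$ with $\beta=\id_D$. Since $\xi\,\id_\OX=\xi_\OX=\eta$ and $\xi(\id_{\OX\totimes D})=\xi_{\OX\totimes D}$, the identity collapses to $\eta\totimes\id_D=\xi_{\OX\totimes D}$. The natural-transformation property of $\xi\in\CC^n$, applied to the degree-$0$ isomorphism $\lambda_D\colon\OX\totimes D\iso D$, yields $\xi_D\ssscirc\lambda_D=\lambda_D\ssscirc\xi_{\OX\totimes D}$; combining these two equations gives $(*)$, hence~(i). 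Note that only the first of the three identities in (ii) is used in this direction.

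For the implication (i)$\Rightarrow$(ii), I would first upgrade $(*)$ to the key relation $\xi_{C\totimes D}=\xi_C\totimes\id_D=\id_C\totimes\xi_D$, whose first equality uses associativity of $\Otimes{X}$ together with the coherence compatibility $\lambda_C\totimes\id_D=\lambda_{C\totimes D}$, and whose second follows symmetrically from the $\rho$-analogue of $(*)$ (available because $\lambda_\OX=\rho_\OX$). Each of the three identities in~(ii) then reduces via the graded composition rule for $\totimes$ (whose sign $(-1)^{\deg\beta_2\,\deg\alpha_1}$ vanishes whenever an inner slot is an identity) together with the natural-transformation relation $\gamma\ssscirc\xi_A=(-1)^{n\,\deg\gamma}\,\xi_C\ssscirc\gamma$ for $\xi\in\CC^n$: identity~$1$ unfolds as $(\xi\alpha)\totimes\beta=(\xi_C\totimes\id_D)\ssscirc(\alpha\totimes\beta)=\xi_{C\totimes D}\ssscirc(\alpha\totimes\beta)=\xi(\alpha\totimes\beta)$; identity~$2$ is analogous via $\xi_{A\totimes B}=\id_A\totimes\xi_B$; and identity~$3$ reduces, after commuting $\xi_{C\totimes D}$ past $\alpha\totimes\id_D$ with naturality sign $(-1)^{ni}$, to the common expression $(-1)^{ni}\,\xi_{C\totimes D}\ssscirc(\alpha\totimes\beta)$ on both sides. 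The only non-routine step is this sign bookkeeping in identity~$3$; all remaining verifications are essentially restatements of $\xi_{A\totimes B}=\xi_A\totimes\id_B$ through the composition rule for $\totimes$.
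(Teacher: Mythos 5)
Your overall route coincides with the paper's: both directions reduce to the two identities $\xi_{C\totimes D}=\xi_C\totimes\id_D$ and $\xi_{C\totimes D}=\id_C\totimes\xi_D$ (the paper's \eqref{id case} and \eqref{id case2}), and your sign bookkeeping deducing the three identities of (ii) from these is correct (the paper dispatches the third identity by simply noting that the hom-sets are symmetric graded $\CC$-modules, which amounts to the same computation). Your (ii)$\Rightarrow$(i) argument, specializing the \emph{first} identity at $\alpha=\id_{\OX}$, $\beta=\id_D$ and conjugating by $\lambda_D$, is a sound variant of the paper's, which instead specializes the \emph{third} identity at a pair of identity maps.

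The one step that does not hold up as written is your derivation of the ``$\rho$-analogue of $(*)$''---equivalently of \eqref{id case2}---from ``$\lambda_{\OX}=\rho_{\OX}$''. The unital-product axioms of \S\ref{unital} impose no associativity constraint and no triangle-type compatibility among $\lambda$, $\rho$ and $\totimes$, so the monoidal-category theorem that left and right translation by an endomorphism of the unit object agree on every object is not formally available; $\lambda_{\OX}=\rho_{\OX}$ by itself says nothing at a general object $A$. The equality $\lambda_A\ssscirc(\eta\totimes\id_A)\ssscirc\lambda_A^{-1}=\rho_A\ssscirc(\id_A\totimes\eta)\ssscirc\rho_A^{-1}$ is true for the specific product on $\bE_\sX$, but proving it is exactly where the nontrivial content of the proposition sits: the paper handles \eqref{id case2} by introducing the symmetry isomorphism $\tau$ of the derived tensor product and checking, component by component on $\OX[n]\otimes C^p\otimes D^q$, that the signs coming from $\vartheta_{0n}$ versus $\vartheta_{n0}$ and from $\tau$ cancel. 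So the ``only non-routine step'' is not the naturality sign in your identity~3 (which, as you show, is routine) but precisely this left-versus-right comparison; to close the gap you must either carry out that component-wise sign computation or first verify enough monoidal coherence for $(\totimes,\vartheta,\lambda,\rho)$ to invoke the general fact.
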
 

\begin{proof}
(i)$\,\Rightarrow\>\>$(ii). Since
$$
(\xi\>\alpha)\totimes\beta=(\xi^{}_C\totimes \id_D)\smallcirc (\alpha\totimes\beta)
\quad\textup{and}\quad 
\xi(\alpha\totimes\beta) = (\xi^{}_{C\totimes D})\smallcirc(\alpha\totimes\beta)
$$
therefore, for the first equality, one need only show that 
\begin{equation}\label{id case}
(\xi^{}_C\totimes \id_D)=\xi^{}_{C\totimes D\>}.
\end{equation} 

Similarly, the second equality reduces to 
\begin{equation}\label{id case2}
(\id_C\totimes \xi^{}_D)=\xi^{}_{C\totimes D\>}.
\end{equation} 

The third equality results from the first two, since the hom-sets  $\bE_\sX(-,-)$ are symmetric graded $\CC^n_{\bE_\sX}$-modules.\va1

In other words, one need only treat the case 
where $\alpha\colon A=C\to C$ and  $\beta\colon B=D\to D$ are the identity maps $\id_C$ and $\id_D$ respectively.  

The equality \eqref{id case} is equivalent to the obvious commutativity of the natural 
$\D(X)$-diagram, where $\otimes\set\Otimes{\sX}\>$, 
\[
 \bpic [xscale=4.2, yscale=2.3]
 
  \node (11) at (1,-2) {$\OX \otimes C \otimes D$};
  \node (12) at (1,-1) {$C \otimes D$};
  \node (13) at (2,-1) {$\OX \otimes C \otimes D$};
  \node (14) at (3,-1) {$\OX[n] \otimes C \otimes D$};
  
  \node (21) at (1,-3) {$\OX[n] \otimes C \otimes D$};
  \node (23) at (3,-2) {$C[n] \otimes D$};

  \node (31) at (2,-3) {$(\OX \otimes C \otimes D)[n]$};
  \node (33) at (3,-3) {$(C\otimes D)[n]$};

  \draw[->] (12)--(11);
  \draw[->] (12)--(13);  
  \draw[->] (13)--(14) node[midway, above=1pt, scale=.75]{$\!\xi\Otimes\sX \<\<\id_{\>C\<\otimes \<D\>}$};
 
  \draw[->] (23)--(33);
  \draw[->] (31)--(33); 
  
  \draw[->] (11)--(21) node[midway, left=1pt, scale=.75]{$\xi\Otimes\sX \<\<\id_{\>C\<\otimes \<D}$};
  \draw[-, double distance=2pt] (21)--(31) node[midway, below=1pt, scale=.75]{$\vartheta_{n0}$};

  \draw[-, double distance=2pt] (23)--(33) node[midway, right=1pt, scale=.75]{$\vartheta_{n0}$};
  \draw[-, double distance=2pt] (14)--(23) node[midway, right=1pt, scale=.75]{$\vartheta_{n0}\Otimes\sX \id_D$};
  
   \draw[-, double distance=2pt] (13)--(11);
   \draw[-, double distance=2pt] (14)--(21);
 \epic
\]

As for \eqref{id case2}, let $\tau'(A,B)\colon A\otimes_\sX B\iso B\otimes_\sX A$ be the unique bifunctorial isomorphism of 
$\OX$-complexes that restricts on 
$A^p\otimes_\sX B^q$ to the map taking $a\otimes b$ to $(-1)^{pq} (b\otimes a)\in B^q\otimes A^p\ (p,q\in\ZZ)$.  One shows as in Lemma~\ref{vartheta} that there~is a unique bifunctorial $\D(X)$-isomorphism 
$\tau(A,B)$
such that for any  $A$ and~$B$ the following $\D(X)$-diagram commutes:
\[
 \bpic[xscale=3.2, yscale=2.3]
   
   \node(11) at (1,-1){$A\Otimes\sX B$};
   \node(12) at (2,-1){$B\Otimes\sX A$};
   \node(21) at (1,-2){$A\otimes_\sX B$};
   \node(22) at (2,-2){$B\otimes_\sX A$};
 
  \draw[->]   (11)--(12)   node[midway, above=1pt,  scale=.75]{$\tau$};
  \draw[->]   (11)--(21)  node[midway, left=1pt,  scale=.75]{canonical};
  \draw[->]   (12)--(22)   node[midway, right=1pt, scale=.75]{canonical};
  \draw[->]   (21)--(22)  node[midway, below=1pt,  scale=.75]{$\tau'$};

 \epic
\]
The equality \eqref{id case2} is equivalent to commutativity of the border of the natural diagram 
\[
 \bpic [xscale=4.4, yscale=1.5]
 
  \node (11) at (1,-2) {$\OX \otimes C \otimes D$};
  \node (12) at (1,-1) {$C \otimes D$};
  \node (13) at (1.9,-1) {$C \otimes \OX \otimes D$};
  \node (14) at (3,-1) {$C \otimes \OX[n] \otimes D$};
  
  \node (21) at (1,-3) {$\OX[n] \otimes C \otimes D$};
  \node (23) at (3,-2) {$C[n] \otimes D$};

  \node (31) at (1.9,-3) {$(\OX \otimes C \otimes D)[n]$};
  \node (33) at (3,-3) {$(C\otimes D)[n]$};

  \draw[->] (12)--(11);
  \draw[->] (12)--(13);  
  \draw[->] (13)--(14) node[midway, above=1pt, scale=.75]{$\<\id_C\Otimes\sX\>\xi\Otimes\sX \<\<\id_D$};
 
  \draw[->] (23)--(33);
  \draw[->] (31)--(33); 
  
  \draw[->] (11)--(21) node[midway, left=1pt, scale=.75]{$\xi\Otimes\sX \<\<\id_{\>C\<\otimes \<D}$};
  \draw[-, double distance=2pt] (21)--(31) node[midway, below=1pt, scale=.75]{$\vartheta_{n0}$};

  \draw[-, double distance=2pt] (23)--(33) node[midway, right=1pt, scale=.75]{$\vartheta_{n0}$};
  \draw[-, double distance=2pt] (14)--(23) node[midway, right=1pt, scale=.75]{$\vartheta_{0n}\Otimes\sX \id_D$};
  
   \draw[->] (11)--(13) node[midway, right] {$\lift.5,{\tau(\OX\<,\>\>C)\Otimes\sX\id_D},$};
   \draw[->] (21)--(14) node[midway, right] {$$\lift.5,{\tau(\OX[n],\>C)\Otimes\sX\id_D},$$};
   
   \node at (1.25,-1.35){\circled1};
  \node at (1.6,-2){\circled2};
  \node at (2.3,-2.55){\circled3};
 \epic
\]
Commutativity of subdiagram \circled1 is easily checked. Commutativity of \circled2 holds by functoriality of $\tau$. For commutativity of \circled3, one checks, taking signs into account, that both paths from $\OX[n] \otimes C \otimes D$ to
$(C\otimes D)[n]$ have the same restriction to each  $\OX[n] \otimes C^p \otimes D^q\ (p,q\in\ZZ)$.

The desired conclusion results. \va2
 
(ii)$\,\Rightarrow\>\>$(i). For $\alpha=\id_{\OX}\in\bE^0_\sX(\OX,\OX)$ and $\beta=\id_{\<A}\in\bE^0_\sX(A,A)$  the identity maps, the third equality in condition (ii) yields 
$$
\xi^{}_{\OX}\totimes \id_{\<A}=\id_{\OX}\totimes \xi^{}_A\>.
$$
In other words, in the following $\D(X)$-diagram---where  unlabeled arrows represent the natural 
isomorphisms---subdiagram \circled4 commutes:\va2
\[
 \bpic[xscale=4.2, yscale=1.75]
  
  \node(11) at (1,-1){$A$};
  \node(13) at (3,-1){$A[n]$};
  
  \node(22) at (2,-2){$\OX\<\Otimes\sX\< A[n]$}; 
  
  \node(31) at (1,-2){$\OX\<\Otimes\sX\< A$};
  \node(33) at (3,-2){$(\OX\<\Otimes\sX\< A)[n]$};
  
  \node(42) at (2,-3.1){$\OX[n]\<\Otimes\sX\< A$};   
  
  \draw[->] (11)--(13) node[midway, above=.5pt, ]{$\lift1.5,\xi^{}_{\<\<A},$};
  \draw[->] (11)--(31);
  \draw[->] (31)--(22) node[midway, above]{$\lift2,\id_{\OX}\!\Otimes{\<\sX}\>\xi^{}_{\<\<A},$};
  \draw[->] (22)--(13);
  \draw[-, double distance=2pt] (22)--(33) node[midway, above]{$\lift1.5,\vartheta_{0n},$};
  \draw[->] (31)--(42) node[midway, left=-1pt]{$\lift.1,\xi^{}_{\OX}\!\Otimes{\<\sX} \id_{\<A},$};
  \draw[->] (33)--(13);
  \draw[-, double distance=2pt] (42)--(33) node[midway, right=1pt]{$\lift-.2,\vartheta_{n0},$};
  
  \node at (2,-2.55){\circled4};
 \epic
\]
The other two subdiagrams clearly commute, so the border commutes. But by definition, the counterclockwise path from the upper left corner to the upper right corner is $\xi'_A\>$, where $\xi'$ is the canonical image
in $\CC^n_{\bE_\sX}$ of the element $\xi^{}_{\OX}\in\bE^n_\sX(\OX,\OX)=H^n(X,\OX)$. Thus, 
after identification of~$H^n(X,\OX)$ with its image in~$\CC^n_{\bE_\sX}$, we have
$\xi=\xi'\in H^n(X,\OX)$.
\end{proof}

\begin{cosa}
Let $f\colon X\to Y$ be a ringed-space map. The natural composition
$$
\mu^{}_{\<\<f}\colon\bE_Y(\OY,\OY)\to\bE_\sX(f^*\<\OY, f^*\<\OY)\iso\bE_\sX(\OX,\OX)
$$
is a graded-ring homomorphism from $H_Y$ to $H_\sX$. Hence, from~\ref{E+graded}, one gets an
$H_Y$-grading on any  full subcategory of $\bE_\sX$.

\end{cosa}

\pagebreak[3]
The graded functors $f^*$ and $f^{}_{\<\<*}$ of \S\ref{D_W} are actually $H_Y$-graded:

\begin{subprop}\label{* and T}    
Let\/ $f\colon \!X\<\to\< Y$ be a ringed-space map, and\/ $C\<\in\D(Y),$ \mbox{$D\<\in\D(Y),$} 
$A\in\D(X)$ and\/ $B\in\D(X)$. 
\vspace{2pt}

{\rm(i)} The  map\/ $f^*\colon\bE_Y(C,D)\to$ 
 $\bE_\sX( f^*\<C, f^*D)$ 
is\/ $H_Y$-linear.\vspace{1pt}

{\rm(ii)} The map\/ $f^{}_{\<\<*}\colon\bE_\sX(A,B)\to$ 
 $\bE_Y( f^{}_{\<\<*}A, f^{}_{\<\<*}B)$ 
is\/ $H_Y$-linear.\vspace{1pt}

{\rm(iii)} If\/ $C=D$ $($respectively $A=B)$ then the map in\/ {\rm(i)}
$($respectively {\rm(ii))}
is a homomorphism of graded\/ $H_Y$-algebras.
\end{subprop}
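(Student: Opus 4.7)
The plan is to reduce each assertion to a compatibility between the natural transformations that produce the scalar action on hom-sets. By the proof of Proposition~\ref{E is H-graded}, for $h \in H_Y^n$ and $D \in \D(Y)$, the ``multiplication by $h$'' endomorphism is the composite
\[
h_D \colon D \iso \OY \Otimes{Y} D \xto{h\>\Otimes{Y}\id_D} \OY[n] \Otimes{Y} D \iso D[n],
\]
and for $\alpha \in \bE_Y^j(C,D)$ one has $h \cdot \alpha = h_D[j] \ssscirc \alpha$. Similarly, the $H_Y$-action on $\bE_\sX$-hom sets is by definition obtained by restriction of scalars through $\mu_f$, and hence acts via $\mu_f(h)_{-}$. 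Since $f^*$ and $\fst$ are $\ZZ$-graded functors (Corollary~\ref{pseudofunctoriality}) they preserve composition, so (i) and (ii) reduce respectively to the identities
\[
f^*(h_D) = \mu_f(h)_{f^*D}, \qquad \fst(\mu_f(h)_B) = h_{\fst B}.
\]

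For (i), I would apply $f^*$ to the three-step composite defining $h_D$. Using the symmetric-monoidal natural isomorphism $f^*(E \Otimes{Y} F) \iso f^*E \Otimes{\sX} f^*F$ (standard in the formalism of \cite[Ch.\,3]{li}) together with the canonical isomorphism $f^*\OY \iso \OX$ (which by definition identifies $f^*h$ with $\mu_f(h)$), the image composite is precisely the one defining $\mu_f(h)_{f^*D}$; this yields the first identity.

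For (ii), the comparison proceeds via the projection-formula map
\[
p_{E,B} \colon E \Otimes{Y} \fst B \to \fst(f^*E \Otimes{\sX} B),
\]
natural in $E$, together with its unit compatibility: $p_{\OY, B}$ fits into a commutative square identifying it, via $\OY \Otimes{Y} \fst B \iso \fst B$ and $f^*\OY \iso \OX$, with the canonical isomorphism $\fst B \iso \fst(\OX \Otimes{\sX} B)$. Pasting this unit square (at degrees $0$ and $n$) with the naturality square of $p_{-,B}$ applied to $h \colon \OY \to \OY[n]$ yields a commutative rectangle whose top composite (after absorbing the outer canonical isomorphisms) is $h_{\fst B}$ and whose bottom composite is $\fst(\mu_f(h)_B)$, giving the second identity.

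Part (iii) is then immediate: on endomorphism hom-sets the graded-ring structure is given by composition, which any functor preserves; combined with the $H_Y$-linearity from (i) or (ii), this makes the induced map a homomorphism of graded $H_Y$-algebras. The main obstacle is the assembly of commutative squares in (ii), which rests on naturality and unit compatibility of the projection-formula comparison $p$---standard facts in the $\R(-)_*$/$\LL(-)^*$ formalism, but with delicate bookkeeping of the translation and tensor identifications. Part (i), by contrast, is essentially immediate from the symmetric-monoidal nature of $f^*$ built into the setup.
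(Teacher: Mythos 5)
Your proposal is correct and follows essentially the same route as the paper: the paper likewise proves (i) by checking that $f^*$ of the $\vartheta$-twisted "multiply by $h$" composite agrees with multiplication by $\mu_f(h)$ via the monoidal comparison $f^*(\OY\Otimes{Y}C)\to f^*\OY\Otimes{\sX}f^*C$, proves (ii) with exactly your projection-formula map (its $p_2(F,G)$) together with its naturality in the first variable and its unit compatibility, and gets (iii) from (i)/(ii) plus functoriality. The only difference is organizational — you factor the morphism $\gamma$ (resp.\ $\alpha$) out first using that $f^*$ and $f^{}_{\<\<*}$ preserve Yoneda composition, whereas the paper carries it along inside one large diagram — and the "delicate bookkeeping" you flag is precisely the compatibility of the monoidal and projection maps with the translation isomorphisms $\vartheta$, which the paper settles by reduction to q-flat resolutions.
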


\begin{proof} (i) We need to show,  for 
$$
\gamma\colon C\to D[i\>] \textup{ in }  \bE_Y^i(C,D) \quad\textup{and} \quad 
h\colon\OY\to\OY[n] \textup{ in } \bE_Y^n(\OY,\OY)=H^n_Y, 
$$
that $f^*\<(\gamma h)= (f^*\gamma)h$---whence by symmetry, $f^*\<(h\gamma)= h(f^*\gamma)$.
Underlying definitions show that the equality in question amounts to commutativity of  the border of the next diagram~\eqref{diag(i)}, where the unlabeled maps are natural (see \cite[3.2.4(i)]{li}), and ``$=$"  represents various canonical isomorphisms.\va{-6}

\begin{figure}[ht]

\begin{equation}\label{diag(i)}
\def\1{$f^*\<C$}
\def\2{$f^*(\OY\<\<\Otimes Y C)$}
\def\3{$f^*\OY\<\<\Otimes \sX\<\< f^*\<C$}
\def\4{$\OX\<\<\Otimes \sX\<\< f^*\<C$}
\def\5{$f^*(\OY[n]\<\<\Otimes Y C)$}
\def\6{$f^*(\OY[n])\<\<\Otimes {\sX}\<\<f^*\<C$}
\def\7{$f^*(\OY)[n]\<\<\Otimes {\sX}\<\<f^*\<C$}
\def\8{$\OX[n]\<\<\Otimes {\sX}\<\<f^*\<C$}
\def\9{$f^*\<\<\big((\OY\<\<\Otimes Y C)[n]\big)$}
\def\ten{$\mkern-5mu(\OX\<\<\Otimes {\sX}\<\<f^*\<C)[n]$}
\def\lvn{$f^*\<\<\big(C[n]\big)$}
\def\twv{$(f^*\<C)[n]$}
\def\thn{$f^*\<\<\big(D[i\>][n]\big)$}
\def\frn{$\big(f^*\<D[i\>]\big)[n]$}
\def\ffn{$f^*\<\<\big(D[i+n]\big)$}
\def\sxn{$(f^*D)[i+n]$}
\def\stn{$(f^*D)[i\>][n]$}
\!
 \bpic[xscale=2.47,yscale=1.5]
 
  \node(13) at (3,-1)[scale=.95] {\1};

  \node(21) at (1,-2) [scale=.95] {\2};
  \node(22) at (2.33,-2) [scale=.95] {\3};
  \node(25) at (5,-2) [scale=.95] {\4};

  \node(31) at (1,-3) [scale=.92] {\5};
  \node(32) at (2.33,-3)[scale=.92]  {\6};
  \node(34) at (3.75,-3) [scale=.92] {\7};
  \node(35) at (5,-3) [scale=.92] {\8};

  \node(41) at (1,-4) [scale=.95] {\9};
  \node(45) at (5,-4)[scale=.95]  {\ten};
 
  \node(51) at (1,-5) [scale=.95] {\lvn};
  \node(55) at (5,-5) [scale=.95] {\twv};

  \node(61) at (1,-6) [scale=.95] {\thn};
  \node(65) at (5,-6) [scale=.95]  {\frn};

  \node(71) at (1,-7) [scale=.95] {\ffn};
  \node(73) at (3,-7) [scale=.95] {\sxn};
  \node(75) at (5,-7) [scale=.95] {\stn};
 
  \draw[->] (21)--(22) ;
  \draw[->] (22)--(25) ;
  
  \draw[->] (31)--(32) ;
  \draw[-, double distance=2pt] (32)--(34);
  \draw[->] (34)--(35) ;

  \draw[-, double distance=2pt] (51)--(55);

  \draw[-, double distance=2pt] (61)--(65);

  \draw[-, double distance=2pt] (71)--(73);
  \draw[-, double distance=2pt] (73)--(75);
  
    \draw[->] (21)--(31) node[left=1pt, midway, scale=.7]{$\textup{via}\;h$};
    \draw[->] (22)--(32) node[right=1pt, midway, scale=.7]{$\textup{via}\;h$};
    \draw[->] (25)--(35) node[right=1pt, midway, scale=.7]{$\textup{via}\;\mu_{\<\<f}(h)$};

    \draw[-, double distance=2pt] (31)--(41) node[left=1pt, midway, scale=.7]{$\LL f^*\vartheta_{n0}$};
    \draw[-, double distance=2pt] (35)--(45) node[right=1pt, midway, scale=.7]{$\vartheta_{n0}$};

    \draw[->] (41)--(51) ;
    \draw[->] (45)--(55) ;

    \draw[->] (51)--(61) node[left=1pt, midway, scale=.7]{$\LL f^*\big(\gamma[n]\big)$};
    \draw[->] (55)--(65) node[right=1pt, midway, scale=.7]{$(\LL f^*\gamma)[n]$};

    \draw[-, double distance=2pt] (61)--(71);
    \draw[-, double distance=2pt] (65)--(75);

   \draw[->] (13)--(21)  ;
   \draw[->] (13)--(25)  ;

  \node at (3,-1.55){\circled1};
  \node at (3,-4){\circled2};

 \epic
\end{equation}

\end{figure}

In the subdiagrams \circled1 and \circled2 of~\eqref{diag(i)} one can replace $C$ by a q-flat resolution~$P_C$ that belongs to a family of q-flat resolutions that commute with~translation (see \cite[2.5.5]{li},
and thereby reduce the question of commutativity to the analogous one in which all derived functors are replaced by ordinary functors of complexes. The latter question is easily disposed of.

Commutativity of the other subdiagrams is straightforward to verify.\va1

(ii) As in (i), given $\alpha\colon A\to B[i\>]$ (in $\bE_\sX^i(A,B)$) and 
$h\colon\OY\to\OY[n]$, one wants commutativity of the border of the next diagram~\eqref{diag(ii)}, in which $p^{}_2(F,G)$~is the bifunctorial map adjoint to the natural composition in $\D(X)$
\[
f^*\<(F\Otimes Y \fst G)\to f^*\<\<F\Otimes\sX f^*\!\fst G\to f^*\<\<F\Otimes\sX G
\qquad(F,G\in\D(Y));
\]
and where unlabeled maps are the natural ones (see \cite[3.2.4(ii)]{li}).

\begin{figure}[ht]

\begin{equation}\label{diag(ii)}
\def\1{$f^{}_{\<\<*}\<A$}
\def\2{$f^{}_{\<\<*}(\OX\<\<\Otimes {\<X} A)$}
\def\3{$f^{}_{\<\<*}\OX\<\<\Otimes Y\<\< f^{}_{\<\<*}\<A$}
\def\4{$\OY\<\<\Otimes Y\<\< f^{}_{\<\<*}\<A$}
\def\5{$f^{}_{\<\<*}(\OX[n]\<\<\Otimes {\<X} A)$}
\def\6{$f^{}_{\<\<*}\big((f^*\OY)[n])\Otimes\sX A\big)$}
\def\7{$f^{}_{\<\<*}(f^*(\OY[n])\Otimes\sX A)$}
\def\8{$\OY[n]\<\<\Otimes {Y}\<\<f^{}_{\<\<*}A$}
\def\9{$f^{}_{\<\<*}\<\<\big((\OX\<\<\Otimes {\<X} A)[n]\big)$}
\def\ten{$\mkern-5mu(\OY\<\<\Otimes {Y}\<\<f^{}_{\<\<*}\<A)[n]$}
\def\lvn{$f^{}_{\<\<*}\<\<\big(A[n]\big)$}
\def\twv{$(f^{}_{\<\<*}\<A)[n]$}
\def\thn{$f^{}_{\<\<*}\<\<\big(B[i\>][n]\big)$}
\def\frn{$\big(f^{}_{\<\<*}\<B[i\>]\big)[n]$}
\def\ffn{$f^{}_{\<\<*}\<\<\big(B[i+n]\big)$}
\def\sxn{$(f^{}_{\<\<*}B)[i+n]$}
\def\stn{$(f^{}_{\<\<*}B)[i\>][n]$}
\def\etn{$f^{}_{\<\<*}f^*\OY\<\<\Otimes Y\<\< f^{}_{\<\<*}A$}
\def\ntn{$f^{}_{\<\<*}(f^*\OY\<\<\Otimes \sX\<\< A)$}
\!
 \bpic[xscale=2.5,yscale=1.3]
 
  \node(13) at (3,0)[scale=.9] {\1};

  \node(21) at (1,-1) [scale=.9] {\2};
  \node(22) at (2.33,-1) [scale=.9] {\3};
  \node(24) at (3.75,-1) [scale=.9] {\etn};
  \node(25) at (5,-1) [scale=.9] {\4};
  
  \node(23) at (3.75,-2) [scale=.9] {\ntn};

  \node(31) at (1,-3) [scale=.88] {\5};
  \node(32) at (2.35,-3.45)[scale=.88]  {\6};
  \node(34) at (3.75,-4) [scale=.88] {\7};
  \node(35) at (5,-3) [scale=.88] {\8};

  \node(41) at (1,-4) [scale=.9] {\9};
  \node(45) at (5,-4)[scale=.9]  {\ten};
 
  \node(51) at (1,-5) [scale=.9] {\lvn};
  \node(55) at (5,-5) [scale=.9] {\twv};

  \node(61) at (1,-6) [scale=.9] {\thn};
  \node(65) at (5,-6) [scale=.9]  {\frn};

  \node(71) at (1,-7) [scale=.9] {\ffn};
  \node(73) at (3,-7) [scale=.9] {\sxn};
  \node(75) at (5,-7) [scale=.9] {\stn};
 
  \node at (1,-7.3) {$$};
  \draw[->] (22)--(21);
  \draw[->] (24)--(22);
  \draw[->] (25)--(24) node[below=-.75pt, midway, scale=.7]{\kern2.75pt(\kern-.6pt\ref{baretaeps}\kern-1pt)};  
  \draw[->] (1.7,-3.32)--(1.45,-3.21) ;
  \draw[-, double distance=2pt] (3.07,-3.9)--(2.7, -3.75);
  \draw[->] (35)--(34) node[above=-7.5pt, midway, scale=.65]{\kern-6pt\rotatebox{23}{$\!\!p^{}_2(\OY[n],\<A)$}};

  \draw[-, double distance=2pt] (51)--(55);

  \draw[-, double distance=2pt] (61)--(65);

  \draw[-, double distance=2pt] (71)--(73);
  \draw[-, double distance=2pt] (73)--(75);
  
    \draw[->] (21)--(31) node[left=1pt, midway, scale=.7]{$\textup{via}\;\mu_{\<\<f}(h)$};
    \draw[->] (24)--(23) ;
    \draw[->] (23)--(34) node[left=1pt, midway, scale=.7]{$\textup{via}\;h$};
    \draw[->] (25)--(35) node[right=1pt, midway, scale=.7]{$\textup{via}\;h$};

    \draw[-, double distance=2pt] (31)--(41) node[left=1pt, midway, scale=.7]{$\Rf \vartheta_{n0}$};
    \draw[-, double distance=2pt] (35)--(45) node[right=1pt, midway, scale=.7]{$\vartheta_{n0}$};

    \draw[->] (41)--(51) ;
    \draw[->] (45)--(55) ;

    \draw[->] (51)--(61) node[left=1pt, midway, scale=.7]{$\Rf \big(\alpha[n]\big)$};
    \draw[->] (55)--(65) node[right=1pt, midway, scale=.7]{$(\Rf \alpha)[n]$};

    \draw[-, double distance=2pt] (61)--(71);
    \draw[-, double distance=2pt] (65)--(75);

   \draw[->] (13)--(21)  ;
   \draw[->] (13)--(25)  ;
  
   \draw[->] (3.2, -1.9)--(1.44, -1.22);
   \draw[->] (25)--(23) node[below=-7.5pt, midway, scale=.65]{\kern10pt\rotatebox{23}{$\quad p^{}_2(\OY\<,\<A)$}}; 

  \node at (3,-.55){\circled3};
  \node at (3,-4.5){\circled4};
  
 \epic
\end{equation}

\end{figure}

Commutativity of the unlabeled subdiagrams of \eqref{diag(ii)} is easily checked.

Commutativity of subdiagram \circled3  is shown in \cite[p.\,104]{li}.\va1

As for \circled4,   it suffices to prove commutativity of the adjoint diagram, namely
the border of the natural $\D(X)$-diagram~\eqref{diag(iii)} below.

\begin{figure}[ht]

\begin{equation}\label{diag(iii)}
\!\<
  \bpic[xscale=4.6, yscale=1.4]

  \node at (0,-.6){$$};
  
   \draw (1,-6) node (12){$f^*\!\fst\big(A[n]\big)$};
 
   \draw (2,-5) node (21){$A[n]$};
   \draw (1,-5) node (22){$(f^*\!\fst A)[n]$};
   \draw (0,-5) node (23){$f^*\big((\fst A)[n]\big)$};

   \draw (2,-4) node (31){$(\OX\Otimes{\<\<X}A)[n]$};
   \draw (1,-4) node (32){$(\OX\Otimes{\<\<X}f^*\!\fst A)[n]$};

   \draw (2,-3) node (41){$\OX[n]\Otimes{\<\<X}A$};
   \draw (1,-3) node (42){$\OX[n]\Otimes{\<\<X}f^*\!\fst A$};
   \draw (0,-3) node (43){$f^*\<\<\big((\OY\<\Otimes{Y}\<\fst A)[n]\big)$};
   
   \draw (2,-2) node (51){$(f^*\OY\<\<)\<[n]\Otimes{\<\<X}A$};
   \draw (1,-2) node (52){$(f^*\OY\<\<)\<[n]\Otimes{\<\<X}f^*\!\fst A$};

   \draw (2,-1) node (61){$f^*\<\big(\OY[n]\big)\Otimes{\<\<X}A$};
   \draw (1,-1) node (62){$f^*\<\big(\OY[n]\big)\Otimes{\<\<X}f^*\!\fst A$};
   \draw (0,-1) node (63){$f^*\<\<\big(\<\OY[n]\<\Otimes{Y}\<\fst A\<\big)$};
 
   \draw[->] (22)--(21) ;
   \draw[-, double distance=2pt] (22)--(23) ;

   \draw[->] (32)--(31) ;
   
   \draw[->] (42)--(41) ;
      
   \draw[->] (52)--(51) ;

   \draw[->] (62)--(61) ;
   \draw[->] (63)--(62) ;

 
  \draw[->] (31)--(21);
  \draw[->] (32)--(22);
  
  \draw[-, double distance=2pt] (41)--(31) node[left=1pt, midway, scale=.7]{$\vartheta_{n0}$};
  \draw[-, double distance=2pt] (42)--(32) node[right=1pt, midway, scale=.7]{$\vartheta_{n0}$};
  \draw[->] (43)--(23);
  \draw[-, double distance=2pt] (43)--(63) node[left=1pt, midway, scale=.7]{$\LL f^*\vartheta_{n0}$};

  \draw[-, double distance=2pt] (51)--(61);
  \draw[-, double distance=2pt] (52)--(62);  
  \draw[->] (51)--(41) ;
  \draw[->] (52)--(42) ;


   \draw[->] (12)--(21) ;
   
   \draw[-, double distance=2pt] (23)--(12) ;

  \node(1) at (1,-5.52){\circled6} ;
  \node(2) at (.5,-3.05) {\quad\circled5};
     
 \epic
\end{equation}

\end{figure}

Diagram \circled5 is the commutative   diagram \circled2 in~\eqref{diag(i)}, with
$C=\fst A$. 

Diagram \circled6 is ``dual" to diagram \circled 2 in \eqref{compid}, so  its commutativity can be proved
as indicated in the last line of the proof of Proposition~\ref{adjunction}.

Commutativity of the remaining subdiagrams is straightforward to verify.

Thus \circled4 commutes, and (ii) results.\va{1.5}

(iii) This follows from (i) (respectively (ii)) and  functoriality of $f^*\<$.
\end{proof}

\penalty-2000
\begin{cosa}\label{shriek} 
Recall examples (a) and (b) in \S\ref{examples}. These examples 
support a \emph{twisted inverse\kf-image pseudofunctor} $(-)_{\<\upl}^!\>$, as follows.

A scheme\kf-map $f\colon X\to Y$ is \emph{essentially smooth} (resp.~\emph{essentially \'etale}) if it is essentially of finite presentation (\S\ref{finite pres}) and formally smooth (resp. \emph{formally \'etale}), i.e.,  for each $x\in X$,  the local ring~$\CO_{\<\<X\<,\>x}$ is formally smooth (resp.~\emph{formally \'etale}) over $\CO_{Y\<,\>fx}$ for the discrete topologies, see \cite[p.\,115, 19.10.2]{EGA0} and cf.~\cite[\S17.1 and Thm.\,17.6.1]{EGA4}.  From \cite[Theorems (17.5.1) and (17.6.1)]{EGA4} it follows that any  essentially smooth or essentially \'etale map is flat.

For a ringed space $X\<$, let $\Dqcpl(X)\subset\Dqc(X)$ be the full subcategory with objects those
complexes $G\in\Dqc(X)$ such that $H^n(G)=0$ for all $n\ll0$.

In case  (a), \cite[5.3]{Nk} gives a contravariant $\Dqcpl$-valued pseudofunctor~$(-)_{\<\upl}^!$ 
over~$\SS$, uniquely determined up to isomorphism by the properties:\va1

(i) When restricted to proper maps, $(-)_{\<\upl}^!$  is pseudo\-functorially right-adjoint to the
right-derived direct-image\va1 pseudofunctor~$\Rf\>$.\looseness=-1

Thus for proper $f\colon X\to Y$, $f^!_\upl$ is defined on all of  $\Dqc(Y)$, and there is a counit map 
\begin{equation}\label{couni+}
\bar{\couni{\<\<\!f}}\colon \Rf f_{\<\<\upl}^!\>\to \id_{\Dqc(Y)}
\end{equation}
 such that \eqref{transitivity}, \emph{mutatis mutandis}, commutes (cf.~\cite[proof of 4.1.2]{li}); and to any independent $\SS$-square~$\Dd$
as in~Prop\-osition~\ref{theta2}, there is associated the functorial isomorphism $\theta_{\<\Dd}\colon u^*\!f_*\iso g_*v^*\<$, whose restriction $\LL u^*\Rf\iso\R g_*\LL v^*$ to derived-category functors we denote by $\bar\theta_\Dd$. 

\penalty-2000
There results the \emph{base-change map}
\begin{equation}\label{base-change}
\!\!\!\bar{\,\,\,\bchadmirado\Dd}\colon v^*\!f_{\<\<\upl}^!\>\to g_{\upl}^!u^*
\end{equation}
that is adjoint to the natural composition 
$$ 
\R g_*v^*\!f_{\<\<\upl}^!\> \underset{\bar\theta_{\<\<\Dd}^{-\<\<1\>}}\iso u^*\Rf f_{\<\<\upl}^!
\xto[\:\bar{\couni{\<\<\!f}}\:]{}u^*.
$$

(ii) When restricted to essentially \'etale maps, $(-)_{\<\upl}^!$   is equal to the usual
inverse-image pseudofunctor (derived or not).\va1

(iii) For each independent $\SS$-square $\Dd$ as in \ref{theta2},
with $f$ (hence $g$) proper and $u$ (hence $v$) essentially \'etale,  $\!\!\!\bar{\,\,\,\bchadmirado\Dd}$
is  the natural composite isomorphism
$$
v^*\!f_{\<\<\upl}^!\>=v_{\<\upl}^!f_{\<\<\upl}^!\>\iso(fv)_{\<\upl}^!=(ug)_{\<\upl}^!\iso g_{\upl}^!u_{\<\upl}^!=g_{\upl}^!u^*.
$$

There is a similarly-characterized pseudofunctor $(-)_{\<\upl}^!$  in case (b)---argue as in~ \cite[Theorem~ 7.3.2]{Nk0}, using \cite[4.7.4 and 4.8.2.3]{li}.\va1

The purpose of this subsection is to extend $(-)^!_\upl$ to an  $H_Y$-graded pseudo\-functor $(-)^!$  taking values in the categories $\D_W$.\va2

For any map $f\colon X\to Y$ in $\SS$, denote the ``relative dualizing complex" $f_{\<\<\upl}^!\OY$ by $\cd f$. Recalling from \S\ref{D_W} that we write  $f^*\<C$ for~$\Lf\< C$, and with $\totimes$ as in \eqref{totimes}, set
\begin{equation}\label{def!}
f^!C:= \cd f\totimes f^*\< C\qquad(C\in\D_Y).
\end{equation}
\vskip1pt
 It follows from Propositions~\ref{tensor graded} and~\ref{* and T}(i) that $f^!(-)$ 
 \emph{is an $H_Y$-graded functor from $\D_Y$ to\/ $\D_\sX$.}\va2

Next, for any $X\xto{f}Y\xto{g}Z$  in $\SS$, we need a degree\kf-0 functorial isomorphism $\ps^!\colon f^!\<g^!\iso (gf)^!$.\va1

The functor $g^!_{\<\upl}$ is bounded above, so $\cd g=g^!_{\<\upl}\OZ\in\Dqcpl(Y)$, see \cite[4.9.4(iv)]{li} 
in case~(a), or  \cite[top of p.\,191]{li} in case~(b).  
By \cite[5.8]{Nk} (in case~(a)), or by \cite[4.7.2]{li} (in case~(b)), there is a canonical functorial isomorphism 
\begin{equation}\label{chi}
\chi^f_C\colon \cd f\Otimes\sX f^*\<C\iso  f_{\<\upl}^!C\qquad (C\in\Dqcpl(Y)).
\end{equation}
There is, in particular, an isomorphism
$$
\chi^f_{\cd g}\colon\cd f\Otimes\sX f^*\cd g\iso\cd{gf}.
$$

We can now define a degree\kf-0 functorial isomorphism
\begin{equation}\label{ps!}
\ps^!\colon f^!\<g^!E\iso (gf)^!E\qquad (E\in\D_Z)
\end{equation} 
to be the natural functorial composite
$$
\cd f\Otimes\sX f^*(\cd g\Otimes Y g^*\<\<E)\iso
(\cd f\Otimes\sX f^*\cd g)\Otimes \sX f^*\<\<g^*\<\<E\iso
\cd{gf}\Otimes\sX (gf)^*\<E.
$$
By the proof of \cite[4.9.5]{li}, when $E\in\Dqcpl(Z)$, this $\ps^!$ can be identified via $\chi^f_{g^!\<E}\>$, 
$\chi^g_E$ and $\chi^{gf}_E$ with the  isomorphism given by 
$\ps_\upl^!\colon f_\upl^!g_\upl^!\iso(gf)_\upl^!\>$. 

\pagebreak[3]
Furthermore, for any $X\xto{f\>}Y\xto{g\>}Z\xto{h\>}W$ in $\SS$, the following natural diagram \emph{commutes,}
\begin{equation}\label{hidden ps^!}
\mkern-25mu
\CD
 \bpic[xscale=3, yscale=1.4]

  \node (11) at (1,-1) {$\cd f\<\Otimes\sX\< f^*\cd g\Otimes\sX f^*\<g^*\cd {\>\>h} $};
  \node (12) at (1,0) {$\cd f\<\Otimes\sX\< f^*(\cd g\Otimes Y g^*\cd {\>\>h})$};
  \node (13) at (3,0) {$\cd f\<\Otimes\sX \<f^*\cd{\>\>hg}$};

  \node (21) at (1,-2) {$ \cd{gf}\Otimes\sX (gf)^*\cd{\>\>h}$};
  \node (23) at (3,-2) {$ \cd{\>\>hgf}$};

  \draw[->] (12)--(11);
  \draw[->] (12)--(13) node[midway, above, scale=.7]{$\!\id\Otimes\sX \>\chi^g_{\cd {\>\>h}\,} $};

  \draw[->] (21)--(23) node[midway, below=1pt, scale=.7] {$\chi^{gf}_{\<\cd {\>\>h}}$};

  \draw[->] (11)--(21) node[midway, left=1pt, scale=.7] {$\chi^f_{\<\cd{\>g}} \!\Otimes\sX\ps^*$};
  \draw[->] (13)--(23) node[midway, right=1pt, scale=.7] {$\chi^f _{\<\cd {\>\>hg}}$};

 \epic
\endCD
\end{equation}
since it is isomorphic to the  natural diagram
\[ \mkern20mu
 \bpic[xscale=5, yscale=1.4]
 
   \node(13) at (3.05,-1){$f_\upl^!\<g_\upl^!h_\upl^!\OW$};
   \node(14) at (4,-1){$f_\upl^!(hg)_\upl^!\OW$};

   \node(23) at (3.05,-2){$(gf)_\upl^!h_\upl^!\OW$};
   \node(24) at (4,-2){$(hgf)_\upl^!\OW$};

     \draw[-, double distance=2pt](13)--(14) node[midway, above=1pt, scale=.75]{$f^!_\upl\ps^!_\upl$};
      \draw[-, double distance=2pt](13)--(23) node[midway, left=1pt, scale=.75]{$\ps^!_\upl$};
        \draw[-, double distance=2pt](14)--(24) node[midway, right=1pt, scale=.75]{$\ps^!_\upl$};
          \draw[-, double distance=2pt](23)--(24) node[midway, below=1pt, scale=.75]{$\ps^!_\upl$};

 \epic
\]
which commutes because $(-)^!_\upl$ and $\ps^!_\upl$ form a pseudofunctor.\va1

To show that $(-)!$ and $\ps^!$ form a pseudofunctor,  use \eqref{hidden ps^!} to verify that the following expansion \eqref{expansion} of the second diagram in \eqref{assoc} commutes. 
\begin{figure}
\rotatebox{-90}
{\begin{minipage}{8.5in}
\vskip-7pt
\begin{equation}\label{expansion}
\mkern-20mu
\CD
 \bpic[xscale=4,yscale=2]
  \node(11) at (1.25,-1) [scale=.87] {$\cd f\<\Otimes\sX\<\< f^*\<\<\big(\cd g\<\Otimes Y\<\< g^*(\cd{\>\>h}\<\Otimes{\<\<Z} \!h^*\<)\big)$};
  \node(12) at (2.5,-1) [scale=.87] {$\cd f\<\Otimes\sX\<\< f^*\<\<\big(\cd g\<\Otimes Y\<\< g^*\cd{\>\>h}\<\Otimes Y\<\< g^*h^*\big)$};
  \node(13) at (3.81,-1) [scale=.87] {$\cd f\<\Otimes\sX\<\< f^*\<\big((\cd g\<\Otimes Y\<\< g^*\cd{\>\>h})\<\Otimes Y \!(hg)^*\big)$};
  \node(14) at (5,-1) [scale=.87] {$\cd f\<\Otimes\sX\<\< f^*\<\<\big(\cd{\>\>hg}\<\Otimes{Y} \<\<(hg)^*\big)$};
  \node(21) at (1.25,-2) [scale=.87] {$\cd f\<\Otimes\sX\<\< f^*\cd g\<\Otimes\sX\<\< f^*\<\<g^*(\cd{\>\>h}\<\Otimes{\<\<Z} \!h^*\<)$};
  \node(22) at (2.53,-2) [scale=.87] {$\cd f\<\<\Otimes\sX\!f^*\cd g\<\Otimes\sX\<\< f^*\<(g^*\cd{\>\>h}\<\Otimes Y \<\<g^*\<h^*\<)$};
  \node(23) at (3.79,-2) [scale=.87] {$\ \:\cd f\<\<\Otimes\sX\!f^*\<(\cd g\<\Otimes Y\<\< g^*\cd{\>\>h})\<\Otimes\sX\<\< f^*\<\<g^*\<h^*$};
  \node(32) at (2.53,-3) [scale=.87] {$\cd f\<\Otimes\sX\<\< f^*\cd g\<\Otimes\sX\<\< f^*\<\<g^*\cd{\>\>h}\<\Otimes\sX \!f^*\<\<g^*\<h^*$};
  \node(33) at (3.79,-3) [scale=.87] {$\cd f\<\Otimes\sX\<\< f^*\cd {\>\>hg}\<\Otimes\sX \<\<f^*\<\<g^*\<h^*$};
  \node(34) at (5,-3) [scale=.87] {$\cd f\<\Otimes\sX\<\< f^*\cd{\>\>hg}\<\Otimes\sX \<\<f^*\<(hg)^*$};
  \node(41) at (1.25,-4) [scale=.87] {$\cd{gf}\<\Otimes\sX\<\< (gf)^*(\cd{\>\>h}\<\Otimes{\<\<Z} \<\<h^*\<)$};
  \node(42) at (2.53,-4) [scale=.87] {$\cd{gf}\<\Otimes\sX\<\< (gf)^*\cd{\>\>h}\<\Otimes\sX \<\<(gf)^*h^*$};
  \node(43) at (3.79,-4) [scale=.87] {$\cd {\>\>hgf}\<\Otimes\sX \!(gf)^*\<h^*$};
  \node(44) at (5,-4) [scale=.87] {$\cd{\>\>hgf}\<\Otimes\sX\<\<(hgf)^*$}; 
    \draw[->] (11)--(12);
    \draw[->] (12)--(13);
    \draw[->] (13)--(14);
    \draw[->] (21)--(22);  
    \draw[->] (33)--(34);
    \draw[->] (41)--(42);
    \draw[->] (11)--(21);
    \draw[->] (2.53, -1.21)--(22);
    \draw[->] (14)--(34);
    \draw[->] (21)--(41);
    \draw[->] (22)--(32);
    \draw[->] (23)--(33);
    \draw[->] (32)--(42);
    \draw[->] (33)--(43);
    \draw[->] (34)--(44);
    \draw[->] (12)--(23);
    \draw[->] (23)--(32);
    \draw[->] (42)--(43); 
    \draw[->] (43)--(44);
    \node at (2.8,-1.6) [scale=.83] {\circled1};
    \node at (1.66,-3.05) [scale=.83] {\circled2};
    \node at (3.16,-3.5) [scale=.83] {cf.~(\ref{hidden ps^!})};
\epic
\endCD
\end{equation}
\end{minipage}
\hspace{-14.5mm}
}
\end{figure}

To see that subdiagram \circled 1 commutes when applied to, say, $E\in\D(W)$, replace $\cd g$, $g^*\cd{\>\>h}$ and $g^*h^*\<\<E$ by q-flat resolutions to reduce to the analogous question for ordinary complexes and nonderived tensor products, which is now easily settled.

Similarly, for commutativity of \circled 2 replace\- $\cd{\>\>h}$ and $h^*E$ by q-flat resolutions, and argue as in the middle of \cite[p.\,124]{li}. 

Checking commutativity of the remaining subdiagrams is straightforward.
\end{cosa}

\begin{cosa}\label{bchadmirado}
Consider, in $\SS$, an independent  square  
\begin{equation}\label{orientedcommsq}
\CD
    \begin{tikzpicture}[yscale=.9]
      \draw[white] (0cm,0.5cm) -- +(0: .75\linewidth)
      node (E) [black, pos = 0.32] {$Y^\prime$}
      node (F) [black, pos = 0.56] {$Y$};
      \draw[white] (0cm,2.65cm) -- +(0: .75\linewidth)
      node (G) [black, pos = 0.32] {$X^\prime$}
      node (H) [black, pos = 0.56] {$X$};
      \draw [->] (G) -- (H) node[above=1pt, midway, sloped, scale=0.75]{$v$};
      \draw [->] (E) -- (F) node[below=1pt, midway, sloped, scale=0.75]{$u$};
      \draw [->] (G) -- (E) node[left=1pt,  midway, scale=0.75]{$g$};
      \draw [->] (H) -- (F) node[right=1pt=1pt, midway, scale=0.75]{$f$};
      \node (C) at (intersection of G--F and E--H) [scale=0.75] {$\Dd$};
    \end{tikzpicture}
\endCD
  \end{equation}
By Proposition~\ref{theta2}, the associated map $\theta_\Dd\colon u^*\!f_*\to g_*v^*$ is an isomorphism.\va1

\begin{subcosa}
With notation as in \eqref{couni+},
the  functorial \emph{flat base-change isomorphism}
$$
\bar{\>\mathsf B}_{\mkern-.5mu\Dd}(G)\colon v^*\!f^!_\upl G\to g^!_\upl u^*\<G\qquad(G\in\Dqcpl (Y))
$$
is defined in case (a) of \S\ref{examples} as follows.

\pagebreak[3]
 If $f$ (hence $g$) is proper, then  $\<\bar{\>\mathsf B}_{\mkern-.5mu\Dd}$ is, as in \eqref{base-change}, the $\D(X')$-map  adjoint to the composite map
      \[
        g_*v^*\!f^!_\upl \xto{\>\bar\theta_{\<\Dd}^{-\<1\<}\<} u^*\!f_*f^!_\upl \xto{\!u^*\!\bar{\counitimes{\!f}}}  u^*\<.
      \]
That in this case $\<\bar{\>\mathsf B}_{\mkern-.5mu\Dd}(G)$ \emph{is an isomorphism for all}\va1 $G\in \Dqcpl (Y)$
is a basic fact of Grothendieck duality theory \cite[Corollary 4.4.3]{li}, \cite[Theorem 5.3]{Nk}.

\pagebreak[3]

When $f$ is not necessarily proper, there exists 
a factorization  \smash{$f=\bar f\liftcirc \<{\underset{\lift1.7,-,}{f}}$}
with $\bar f$ proper and \smash{${\underset{\lift1.7,-,}{f}}$} a \emph{localizing immersion} \cite[Theorem 4.1]{Nk}. 
Localizing immersions are set-theoretically injective maps that on
sufficiently small affine sets correspond to localization of rings.\va{.4} They are flat monomorphisms,
and if of finite type, open immersions, 
see \cite[2.7, 2.8.8, 2.8.7, 2.8.3]{Nk}.\va{.8} 
 They are essentially \'etale, so \smash{${\underset{\lift1.7,-,}{f}}^!{}^{}_{\!\!\!\upl}={\underset{\lift1.7,-,}{f}}^*\<$.}
Localizing immersions\va{1.3} remain so after base change \cite[2.8.1]{Nk}. Hence
 $\Dd$ decomposes into two fiber squares\looseness=-1
\[ 
 \begin{tikzpicture}[scale=.9]
      \draw[white] (0cm,0.5cm) -- +(0: \linewidth)
      node (E) [black, pos = 0.4] {$Y'$}
      node (F) [black, pos = 0.59] {$Y$};
      \draw[white] (0cm,2.65cm) -- +(0: \linewidth)
      node (G) [black, pos = 0.4] {$\bar X'$}
      node (H) [black, pos = 0.59] {$\bar X$};
      \draw[white] (0cm,4.8cm) -- +(0: \linewidth)
      node (I) [black, pos = 0.4] {$X'$}
      node (J) [black, pos = 0.59] {$X$};
      \draw [->] (G) -- (H) node[above, midway, sloped, scale=0.75]{$h$};
      \draw [->] (E) -- (F) node[below=1pt, midway, sloped, scale=0.75]{$u$};
      \draw [->] (G) -- (E) node[left, midway, scale=0.75]{$\bar g\>$};
      \draw [->] (H) -- (F) node[right, midway, scale=0.75]{$\>\bar f$};
      \draw [->] (I) -- (J) node[above=1pt, midway, sloped, scale=0.75]{$v$};
      \draw [->] (I) -- (G) node[left, midway, scale=0.75]{${\underset{\lift 1.7,-,}{g}}\>$};
      \draw [->] (J) -- (H) node[right, midway, scale=0.75]{$\>{\underset{\lift1.6,-,}{f}}$};
      \node (K) at (intersection of I--H and G--J)   [scale=0.85] {\lift2,\underbar{$\Dd$},};
      \node (L) at (intersection of G--F and H--E) [scale=1.15] {$\lift1.15,\overline{\Dd},$};
 \end{tikzpicture}
 \]
where \smash{${\underset{\lift 1.7,-,}{g}}$ is a localizing immersion, so that 
${\underset{\lift 1.7,-,}{g}}^!={\underset{\lift 1.7,-,}{g}}^*$.}\va{3}

Let  \smash{$\<\bar{\>\mathsf B}\<(\overline\Dd,\underbar{$\Dd$})$} be the composite isomorphism,
in $\D(X')$,
$$
v^*\!f^!_\upl \iso v^*\!{\underset{\lift1.7,-,}{f}}^!{}^{}_{\!\!\!\upl}\>\>\>\bar{\!f}{}^!_{\!\upl} = v^*\<\<{\underset{\lift 1.7,-,}{f}}^*\>\bar{\!f}{}^!_{\!\upl}
\overset{\ps^*}{=\!=}{\underset{\lift 1.7,-,}{g}}^{\<*}\<h^{\<*}\>\bar{\!f}{}^!_{\!\upl}\underset{\lift1.2,
\!\bar{\>\>\lift.75,\mathsf B,}^{}_{\mkern-.5mu\sss\bar\Dd},}{\iso} {\underset{\lift 1.7,-,}{g}}^{\<*}\bar g^{\>!}_\upl u^*
= {\underset{\lift 1.7,-,}{g}}^!{}^{}_{\!\!\!\upl}\>\bar g^{\>!}_\upl u^*\overset{\ps^{\>!}_{\lift.15,\halfsize\upl,}}{=\!=}
g^{\>!}_\upl u^*.
$$

Arguing as in the proof of \cite[Theorem 4.8.3]{li}, one shows that $\<\bar{\>\mathsf B}\<(\overline\Dd,\underbar{$\Dd$})$ depends only on $\Dd$, and not on its decomposition. We may therefore denote this
functorial isomorphism simply by  $\<\bar{\>\mathsf B}_{\mkern-.5mu\Dd}$.  (See also
\cite[5.2, 5.3]{Nk}.)\va1

In particular, we have the $\D(X')$-isomorphism
\begin{equation}\label{Bcd}
  \<\bar{\>\mathsf B}_{\mkern-.5mu\Dd}(\OY)\colon v^*\cd f= v^*\!f^!_\upl\OY\iso g^{\>!}_\upl u^*\OY=\cd g.
\end{equation}

\smallskip  
Case (b) of \S\ref{examples} can be treated analogously, see \cite[Theorem 7.3.2(2)]{Nk0}.

\end{subcosa}

\begin{subcosa}
Now, referring to \eqref{orientedcommsq}, we define the $\D_{\sX'}\mkern.5mu$-isomorphism
$$
\bchadmirado{\Dd}(G)\colon v^*\!f^!G\to g^! u^*\<G\qquad(G\in\D_Y)
$$
to be the natural composition
$$
v^*(\cd f\totimes \<f^*\<G)\iso v^*\cd f\totimes v^*\!f^*\<G \underset{\eqref{Bcd}}{\iso} 
\cd g\totimes v^*\!f^*\<G\overset{\id\!\totimes \!\ps^*}{=\!=\!=\!=}\cd g\totimes g^*u^*\<G .
$$

It results from \cite[Exercise 4.9.3(c)]{li}  that if $G\in\Dqcpl(Y)$ then 
\begin{equation}\label{B=barB}
\bchadmirado{\Dd}(G)= \<\bar{\>\mathsf B}_{\mkern-.5mu\Dd}(G).
\end{equation}

It is left to the reader to verify that $\bchadmirado{\Dd}$ is a degree\kf-0 functorial map.

It is also left to the reader to use the definition of $\>\bchadmirado{\Dd}$ to expand the \emph{vertical and horizontal transitivity} diagrams 
\eqref{basechange1} and \eqref{basechange2} and to verify that the expanded diagrams commute, using e.g., transitivity for $\<\bar{\>\mathsf B}_{\mkern-.5mu\Dd}$ (see \cite[p.\,205, (3)]{li} and \cite[p.\,208, Theorem~4.8.3]{li}---whose proof, in view of Nayak's compactification theorem \cite[Theorem 4.1]{Nk}, extends to \emph{essentially} finite\kf-type maps), transitivity for $\theta_\Dd$ (cf.~\cite[Prop.\, 3.7.2, (ii) and (iii)]{li}),  and the ``dual" \cite[pp.\,106--107]{li} of the last diagram in \cite[3.4.2.2]{li}, as treated  in the first paragraph of \cite[p.\,104]{li}.

\end{subcosa}
\end{cosa}

\begin{cosa}\label{integral}
Let $f\colon X\to Y$ be a \emph{confined} $\SS$\kf-map (see \S\ref{examples}). 
We now define a degree\kf-0 functorial map
$\couni{\<\<\!f}\colon f^{}_{\<\<*}f^!\to \id$ that satisfies transitivity (see \S\ref{f_*}).\va1

The \emph{projection map} $p(F,G)\ (F\in\Dqc(X),G\in\Dqc(Y))$ is the natural composition,
in $\Dqc(Y)$,
\begin{equation}\label{projection}
\fst F\Otimes Y G \to \fst f^*\<(\fst F\Otimes Y G)\to\fst (f^*\<\<\fst F\Otimes \sX f^*G)\to
\fst(F\Otimes \sX f^*G).
\end{equation}
 
 This $p(F,G)$ is an \emph{isomorphism} \cite[3.9.4]{li}. Denote its inverse by $\tilde p\>(F,G)$.\va1

 From \eqref{couni+} we have a $\Dqc(Y)$-map $\fst \cd f\to \OY$. 
Using this map, let $\bar{\couni{\<\<\!f}}(G)$ be the natural functorial composition
\begin{equation*}\label{defint}
f_*(\cd f\Otimes \sX f^*G)\xto{\tilde p(\cd f\<,\>\>G)} \fst\cd f \Otimes Y G \lto \OY\Otimes Y G\iso G.
\end{equation*}

\begin{sublem} \label{extend int}
This\/ $\bar{\couni{\<\<\!f}}$ extends to a degree-$0$ map $\couni{\<\<\!f}$ of graded endofunctors of\/~$\D_Y$.
\end{sublem}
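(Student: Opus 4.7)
The plan is to verify the two defining conditions of a degree-$0$ functorial map of $H_Y$-graded endofunctors of $\D_Y$: (a) both $f_*f^!$ and $\id_{\D_Y}$ are $H_Y$-graded endofunctors; and (b) the family $\{\bar{\couni{\<\<\!f}}(G)\}_{G \in \D_Y}$, regarded as elements of $\bE_Y^0(f_*f^!G, G)$, satisfies the naturality condition~\eqref{(-1)^mn} with $n = 0$ for every degree-$i$ morphism $\gamma \in \bE_Y^i(C, D)$.

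Condition (a) follows from prior results. By~\eqref{def!}, $f^!$ is the composition of the $H_Y$-graded functor $f^*$ (Proposition~\ref{* and T}(i)) with the functor $\cd f\totimes(-)$, which is $H_\sX$-graded---and hence $H_Y$-graded via~$\mu_{\<\<f}$---by the unital product constructed in the proof of Proposition~\ref{E is H-graded}; post-composition with the $H_Y$-graded functor $f_*$ (Proposition~\ref{* and T}(ii)) yields the $H_Y$-graded structure on~$f_*f^!$.

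For (b), a degree-$i$ morphism $\gamma \in \bE_Y^i(C, D)$ is a $\D(Y)$-map $\gamma\colon C \to D[i]$, and the required condition, after unfolding $f_*f^!\gamma$ using the canonical identifications $f^*(D[i]) \iso (f^*\<D)[i]$, $\vartheta_{0i}\colon \cd f \Otimes\sX (f^*\<D)[i] \iso (\cd f \Otimes\sX f^*\<D)[i]$, and $f_*((-)[i]) \iso (f_*(-))[i]$, becomes commutativity in $\D(Y)$ of
$$
\CD
f_*(\cd f \Otimes\sX f^*\<C) @>\bar{\couni{\<\<\!f}}(C)>> C\\
@V f_*f^!\gamma VV @VV\gamma V\\
\bigl(f_*(\cd f \Otimes\sX f^*\<D)\bigr)[i] @>\bar{\couni{\<\<\!f}}(D)[i]>> D[i]
\endCD
$$
The strategy is to expand $\bar{\couni{\<\<\!f}}$ via its three-step definition~\eqref{defint} and verify each resulting subsquare as a naturality square for its ingredient: the inverse projection $\tilde p(\cd f, -)$, the trace $f_*\cd f \to \OY$ (which is constant in~$G$), and the left-unit isomorphism $\OY \Otimes{Y} (-) \iso \id$. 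Each is a natural transformation of ordinary derived functors on $\Dqc(Y)$ that is compatible with translation in its construction (see \cite[Chapter~3]{li}); pasting the three squares, with bookkeeping of the canonical translation identifications for $f^*$, $\Otimes\sX$, $\Otimes{Y}$, and $f_*$, gives the claim.

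The only point requiring care is tracking the signs contributed by $\vartheta_{0i}$ when identifying $f^!\gamma$ with $\id_{\cd f}\totimes f^*\gamma$, but these signs are absorbed into the standard translation-compatibility data, exactly as in the proofs of Propositions~\ref{tensor graded} and~\ref{* and T}. So the main obstacle is essentially bookkeeping rather than a substantive compatibility, and the lemma follows.
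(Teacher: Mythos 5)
Your overall strategy is the same as the paper's: unwind $\bar{\couni{\<\<\!f}}$ into its three constituents from \eqref{projection} and the definition preceding Lemma~\ref{extend int}---the inverse projection $\tilde p\>(\cd f,-)$, the trace $\fst\cd f\to\OY$ tensored with the identity, and the unit isomorphism $\OY\Otimes Y(-)\cong\id$---and check that each piece is compatible with a degree-$i$ map and with the translation identifications $\vartheta_{0i}$. The squares for the trace (constant in the variable) and for the unit isomorphism are indeed routine, and your point (a) about the graded structure of $\fst f^!$ is fine.

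The gap is in how you dispose of the projection map. Saying that $\tilde p$ is ``compatible with translation in its construction (see \cite[Chapter~3]{li})'' is not a proof: that compatibility---the identification of $\tilde p\>(\cd f,B[i\>])$ with $\tilde p\>(\cd f,B)[i\>]$ through the canonical isomorphisms $f^*(B[i\>])\cong(f^*\<\<B)[i\>]$, $\vartheta_{0i}$, and $\fst((-)[i\>])\cong(\fst(-))[i\>]$---is precisely the substantive content of the lemma, and it is not a quotable fact from \cite{li}. The paper's proof spends essentially all of its effort on exactly this square (its subdiagram \circled1): it replaces $\tilde p$ by $p$, passes to the $(\LL f^*\<\dashv\Rf)$-adjoint diagram, and then reduces the remaining compatibility to a statement about ordinary (nonderived) complexes and the explicit isomorphism $\vartheta'$ of Lemma~\ref{vartheta} by choosing q-flat resolutions of $\fst\cd f$ and $B$. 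Some argument of this kind is required; your ``bookkeeping'' remark asserts the conclusion rather than establishing it. The assertion is true, and your outline becomes a complete proof once this one verification is supplied, but as written the decisive step is missing.
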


\begin{proof} Set $D\set\cd f$, and write $\otimes$ for $\Otimes\sX$ or $\Otimes Y$, as the case may be. Unwinding definitions,  interpret  the assertion as being  that for any 
$\Dqc(Y)$-map $\alpha\colon A\to B[i\>]\ (i\in\ZZ)$,
the border of the following natural diagram commutes:\looseness=-1

\smallskip
\begin{small}
\[
\def\1{\mkern-3mu\fst (D\<\otimes\<\< f^*\!A)}
\def\2{A}
\def\3{\mkern-17mu\fst D\<\otimes\<\< A}
\def\4{B[i\>]}
\def\5{\fst\<\big(D\<\<\otimes\! f^*\<(B[i\>])\big)}
\def\6{\fst\<\big(D\<\<\otimes\!(f^*\<\<B)[i\>]\big)}
\def\7{\makebox[0pt]{\hss$\mkern18mu\fst\<\big ((D\<\otimes\<\<f^*\<\<B)[i\>]\big)$\hss}}
\def\8{\mkern-14mu\OY\!\otimes\<\< A}
\def\9{\makebox[0pt]{\hss$\big(\fst\<(D\<\otimes\<\<f^*\<\<B)\<\big)[i\>]$\hss}}
\def\ten{(\fst D\<\otimes\<\< B)[i\>]}
\def\lvn{\mkern8mu(\<\<\OY\!\otimes\<\< B)\<[i\>]}
\def\twv{\fst D\<\otimes\<\< B[i\>]}
\def\thn{\mkern6mu\OY\!\otimes\<\< B[i\>]}
\def\frn{A}
\minCDarrowwidth=.25in
\CD
\1 @>\tilde p(D\<,\>A)>> \3 @>{\!\bar{\couni{\!\!f}}(\OY)\otimes\>\id\,}>> \8 @>>> \2
         \\[-2pt]
@V\textup{via}\;\alpha VV @V\textup{via}\;\alpha VV @V\textup{via}\;\alpha VV 
    @V\textup{via}\;\alpha VV
         \\[-5pt]   
  \5 @>\tilde p(D\<,\>B[i\>])>\lift-6.45,{\textup{\normalsize\circled1}},> \twv @>{\!\bar{\couni{\!\!f}}(\OY)\otimes\>\id\,}>> \thn @>>>
   \4
         \\[-2pt]
@| @V\vartheta_{0i}VV @V\vartheta_{0i}V\mkern30mu\lift3,\rotatebox{30}{\hbox to 42pt{\rightarrowfill}},V 
         \\[-4pt]
\6 @. \ten @>{\!\!(\bar{\couni{\!\!f}}(\OY)\otimes\>\id)[i\>]\>}>> \lvn 
         \\
@V\fst\vartheta_{0i}VV  @A\tilde p(D\<,B)[i\>]AA
         \\
\7 @= \9
\endCD
\]
\end{small}
\vskip6pt
Commutativity of the unlabeled subdiagrams is evident. 
To prove commutativity of subdiagram \circled1 replace $\tilde p\>$ by $p$ (reversing the associated arrows), and then look at the $(\LL f^*\!\<\dashv\< \Rf)$-adjoint diagram, which is the border of the natural diagram

\[
\mkern-3mu
\def\1{D\<\otimes\<\<f^*\<\<\big(B[i\>]\big)}
\def\2{f^*\<\<\fst D\<\otimes\<\<f^*\<\<\big(B[i\>]\big)}
\def\3{f^*\<(\fst D\<\otimes\<\<B[i\>])}
\def\4{f^*\<\<\big(\<(\fst D\<\otimes\<\<B)[i\>]\big)}
\def\5{f^*\<\<\big(\<(\fst (D\<\otimes\<\<f^*B)\<)[i\>]\big)}
\def\6{f^*\<\<\fst (D\<\otimes\<\<f^*B)[i\>]}
\def\7{(D\<\otimes\<\<f^*B)[i\>]}
\def\8{D\<\otimes\<\<(f^*B)[i\>]}
\def\9{\big(f^*\<(\fst D\<\otimes\<\<B)\<\big)[i\>]}
\def\ten{(f^*\<\<\fst D\<\otimes\<\<f^*\<B)[i\>]}
\def\lvn{f^*\<\<\fst D\<\otimes\<\<f^*\<(B)[i\>]}
 \bpic[xscale=3.4, yscale=1.4]
 
  \node(11) at (1,-1)[scale=.9]{$\3$};
  \node(13) at (3,-1)[scale=.9]{$\2$};
  \node(14) at (3.95,-1)[scale=.9]{$\1$};

  \node(23) at (3,-2)[scale=.9]{$\lvn$};
  \node(24) at (3.95,-2)[scale=.9]{$\8$};
 
  \node(31) at (1,-3)[scale=.9]{$\4$};
  \node(32) at (2,-3)[scale=.9]{$\9$};
  \node(33) at (3,-3)[scale=.9]{$\ten$};
  \node(34) at (3.95,-3)[scale=.9]{$\7$};
 
  \node(41) at (1,-4)[scale=.9]{$\5$};
  \node(42) at (2,-4)[scale=.9]{$\6$};
  \node(43) at (3,-4)[scale=.9]{$\7$};

  \draw[->] (11)--(13);  
   \draw[->] (13)--(14);  
    \draw[->] (23)--(24);  
  \draw[-,double distance=2pt] (31)--(32);  
   \draw[->] (32)--(33);  
     \draw[->] (33)--(34);  
  \draw[-,double distance=2pt] (41)--(42);  
   \draw[->] (42)--(43);  
    \draw[-,double distance=2pt] (43)--(34);
  
  \draw[->] (11)--(31) node[midway, left, scale=.7]{$f^*\vartheta_{0i}$};  
   \draw[-,double distance=2pt] (13)--(23);
    \draw[-,double distance=2pt] (14)--(24);
  \draw[->] (23)--(33) node[midway, right, scale=.7]{$\vartheta_{0i}$};  
   \draw[->] (24)--(34) node[midway, right, scale=.7]{$\vartheta_{0i}$};  
   \draw[->] (31)--(41) node[midway, right, scale=.7]{$f^*\<\<\big(p(D\<,B)[i\>]\big)$};  
    \draw[->] (32)--(42) node[midway, right, scale=.7]{$\big(f^*(p(D\<,B)\big)[i\>]$};  
      \draw[->] (33)--(43);  
      
  \node at (2,-2){\circled2};
 \epic
\]

To show that subdiagram \circled2 commutes, replace $f_*D$ and $B$ by quasi-isomorphic q-flat complexes, and $\vartheta$ by $\vartheta'$ (see \ref{vartheta}), to reduce the question to the analogous one for ordinary complexes and nonderived functors,  which situation  is readily handled. Details, as well as commutativity of the other subdiagrams, are left to the reader. Thus the adjoint diagram commutes,
whence so does \circled1, and the  conclusion results.
\end{proof}

\begin{subprop}\label{transint}
Let\/ $f\colon X\to Y$ and\/ $g\colon Y\to Z$ be $\SS$\kf-maps. Then with\/ $(-)^!$~ 
as in\/~\eqref{def!}$,$ \ $\ps^!$ as in\/~\eqref{ps!}$,$ and $\couni{}$ as in \textup{\ref{extend int},}
the transitivity diagram\/~\eqref{transitivity} commutes.
\end{subprop}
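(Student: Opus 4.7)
The plan is to evaluate diagram~\eqref{transitivity} at an arbitrary $E\in\D_Z$ and expand both sides via the definitions~\eqref{def!} of $f^!,$ \eqref{ps!} of $\ps^!,$ and of $\couni{\<\!f}$ from Lemma~\ref{extend int}. Writing $D_f\set\cd f,$ $D_g\set\cd g,$ $D_{gf}\set\cd{gf},$ both compositions give maps $(gf)_*(D_{gf}\totimes(gf)^*\!E)\to E.$ The counterclockwise side is $\bar{\couni{\<\!gf}}(\OZ)\otimes\id_E$ pre-composed with the inverse projection $\tilde p\>(D_{gf},E).$  The clockwise side unfolds into a longer chain involving $\ps_*,$ $(\chi^f_{D_g})^{-\<1},$ $\ps^*,$ two applications of $\tilde p$ (one for~$f,$ one for~$g$), and the maps $\bar{\couni{\<\!f}}(\OY)$ and $\bar{\couni{\<\!g}}(\OZ).$

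First I would push all $\tilde p$'s to the left on both sides, thereby factoring the asserted identity into an ``$E$-part'' (involving only projection isomorphisms and the canonical pseudofunctorial isomorphisms) and a ``counit-on-$\OZ$ part'' (involving only the relative dualizing complexes $D_f,\>D_g,\>D_{gf}$). This reduces the claim to two independent facts.

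The first fact is a pseudofunctoriality statement for the projection isomorphism: for $F\in\D_\sX$ and $E\in\D_Z,$ the isomorphism $p_{gf}(F,E)$ agrees, modulo $\ps_*$ and $\ps^*,$ with the composite $g_*p_f(F,g^*\!E)\ssscirc p_g(f_*F,E).$ This is provable by transferring to the $(\Lf\dashv\Rf)$ adjoint side, where it reduces to a coherence identity among the adjunction units, counits, and diagram~\eqref{adjpseudo}; q-flat resolutions pass the verification to the non-derived setting (compare \cite[3.4.7, 3.6.10]{li}). The second fact is the transitivity of $\bar{\couni{}}$ on $\OZ\in\Dqcpl(Z)$: the composite
\[
(gf)_*D_{gf}\iso g_*f_*(D_f\totimes f^*\<D_g)\xrightarrow{g_*\tilde p}g_*(f_*D_f\totimes D_g)\xrightarrow{g_*(\bar{\couni{\<\!f}}(\OY)\otimes\id)}g_*D_g\xrightarrow{\bar{\couni{\<\!g}}(\OZ)}\OZ
\]
equals $\bar{\couni{\<\!gf}}(\OZ).$ Since every object in this chain lies in $\Dqcpl,$ the isomorphism~\eqref{chi} identifies $\bar{\couni{}}$ here with the classical counit $\counitimes{}_\upl$ of the pseudo-adjoint pair $(\R(-)_*,\>(-)^!_\upl),$ and the displayed equality is the standard transitivity of~$\counitimes{}_\upl,$ built into the pseudofunctorial adjointness used in the construction of $(-)^!_\upl$ in \cite[proof of Prop.~4.1.2]{li}, \cite[Theorem~5.3]{Nk} (case (a)), or \cite[4.7.4]{li} (case (b)).

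Granting these two facts, the theorem follows by concatenation: the first collapses the two clockwise $\tilde p$'s into the single counterclockwise $\tilde p\>(D_{gf},E),$ and the second identifies the two resulting maps $(gf)_*D_{gf}\to\OZ.$ The main obstacle is the first fact; although pseudofunctoriality of the projection is folklore, the careful verification in this graded setting—involving the asymmetric interchange of $(gf)^*$ and $f^*\<g^*$ across~$p$—requires methodical bookkeeping of the coherences of $\ps_*,\ps^*$ and the adjunction unit and counit, as in diagram~\eqref{adjpseudo}.
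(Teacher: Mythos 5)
Your argument is sound in outline, but it follows a genuinely different route from the paper's. The paper avoids the entire projection-formula computation: it invokes global duality to get a right adjoint $f^\times$ of $\fst$ on \emph{all} of $\Dqc$, uses \cite[4.7.2 and 4.7.3(a)]{li} to extend $\chi^f$ of \eqref{chi} to an isomorphism $f^!C\iso f^\times C$ for every $C\in\Dqc(Y)$, and observes that under this identification $\couni{\<\<\!f}$ is \emph{by definition} the counit of $\Rf\dashv f^\times$ while $\ps^!$ of \eqref{ps!} becomes the conjugate isomorphism $\ps^\times$; diagram \eqref{transitivity} then commutes by the very definition of $\ps^\times$, as in \cite[4.1.2]{li}. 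Your decomposition instead isolates (1) transitivity of the projection isomorphism \eqref{projection} with respect to $X\xto{f}Y\xto{g}Z$ and (2) transitivity of $\bar{\couni{}}$ evaluated at $\OZ$; your fact (2) is essentially the paper's argument restricted to $\Dqcpl$ (where it is legitimate, since $\OZ$, $\cd g$, $\cd{gf}$ all lie there and $\ps^!$ is identified with $\ps^!_\upl$ after \eqref{ps!}), and your fact (1) is precisely the work the paper outsources to the extension of $\chi$. Your approach buys independence from the unbounded right adjoint $(-)^\times$ and the extended $\chi$, at the price of the coherence chase; the paper's buys brevity at the price of leaning on \cite[4.7.3(a)]{li}. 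One caveat on your bookkeeping: the clean split into an ``$E$-part'' and a ``counit-on-$\OZ$ part'' needs, besides your fact (1), the associativity coherence $p(A,B\totimes C)=p(A\totimes f^*\<B,\>C)\smallcirc\big(p(A,B)\totimes\id_C\big)$ (cf.\ \cite[3.4.7]{li}) in order to rewrite $\couni{\<\<\!f}\big(\cd g\totimes g^*\<E\big)$ as $\couni{\<\<\!f}(\cd g)\totimes\id$ modulo projections, together with naturality of $p_g(-,E)$ in its first variable; these are standard but should be named explicitly rather than absorbed into ``methodical bookkeeping.'' With that supplement, your proof closes.
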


\begin{proof} Global duality asserts the existence, for \emph{any} $\SS$\kf-map $f\colon X\to Y\<$, of a right adjoint $f^\times$ for the functor $\fst \colon\Dqc(X)\to\Dqc(Y)$ (see \cite[4.1]{li}). For confined 
$f$, the restriction of $f^\times$ to $\Dqcpl(Y)$ can be identified with the functor~$f^!_\upl$ from \S\ref{shriek}(i); in particular, the relative dualizing complex $\cd f$ in~\eqref{def!} can be identified
with $f^\times\OY$. 
Also, by \cite[4.7.2 and 4.7.3(a)]{li},  $\chi^f_C$~in~\eqref{chi} extends\va{-1} to an isomorphism $f^!C\set \cd f\Otimes\sX f^*C\iso f^\times C$ for all $C\in\Dqc(Y)$; and by their very definitions, this extended $\chi^f_C$ and~$\bar{\couni{\<\<\!f}}(C)\colon f^{}_{\<\<*}f^!C\to C$ correspond under the adjunction
$\Rf\dashv f^\times$. 

Thus identifying $f^!$ with $f^\times$ via the extended isomorphism $\chi^f$ turns $\couni{\<\<\!f}$~into the counit map $\couni{\<\<\!f}^\times\colon  f_*f^\times\to \id$.
Furthermore,\va{-.7} as in the proof of \cite[4.9.5]{li}, that identification of $f^!$ with~$f^\times$ turns 
$\ps^!$ in \eqref{ps!}\va{.5} into the natural pseudo\-functorial isomorphism 
$$
\ps^\times\<\<\colon f^\times\< g^\times\iso (gf)\<^\times.
$$ 

The proof of \cite[4.1.2]{li} shows that commutativity of diagram~\eqref{transitivity} 
with $(-)^\times$, $\ps^\times$  and~$\couni{\<\<\!f}^\times$ in place of 
$(-)^!$, $\ps^!$  and~$\couni{\<\<\!f}$, respectively,  holds by definition of~$\ps^\times\<$. The conclusion follows.
\end{proof}
\end{cosa}

\begin{cosa}\label{2nd diag} 
It remains to show that with $\Dd$ the independent square \eqref{orientedcommsq}, 
diagrams~\eqref{first diagram} and  \eqref{second diagram} commute. 

\penalty-2000
\begin{subcosa} According to the definitions in sections~\ref{bchadmirado} and~\ref{integral}, commutativity of \eqref{first diagram} amounts to commutativity of 
the following $\D(Y')$-diagram, in which $G\in\Dqc(Y)$,  $\otimes$ stands for $\Otimes{}$ with the appropriate subscript, labels on maps tell how those maps arise, and unlabeled maps are the natural ones.\looseness=-1
\[
\mkern-3mu
\def\1{u^*\!\fst(\cd f\otimes f^*\<G)}
\def\2{g_*v^*\<(\cd f\otimes\< f^*\<G)}
\def\3{g_*(v^*\cd f\otimes v^*\!f^*\<G)}
\def\4{u^*\<(\fst\cd f\otimes G)}
\def\5{u^*\!\fst\cd f\otimes u^{\<*}\<G}
\def\6{g_*v^*\cd f\otimes u^{\<*}\<G}
\def\7{g_*(v^*\cd f\otimes g^*u^{\<*}\<G)}
\def\8{\,g_*\cd g\otimes u^{\<*}\<G}
\def\9{g_*(\cd g\otimes g^*u^{\<*}\<G)}
\def\ten{u^{\<*}\<(\OY\otimes G)}
\def\lvn{u^{\<*}\<\OY\otimes u^{\<*}\<G}
\def\twv{\mathcal O_{Y'}\otimes u^{\<*}\<G}
\def\thn{u^{\<*}\<G}
 \bpic[xscale=3.2,yscale=1.4]
  \node(11) at (1,-1) {$\1$};
  \node(12) at (2.03,-1) {$\2$};
  \node(14) at (4,-1) {$\3$};

  \node(21) at (1,-2) {$\4$};
  \node(22) at (2.03,-2) {$\5$};
  \node(23) at (2.97,-2) {$\6$};
  \node(24) at (4,-2) {$\7$};

  \node(33) at (2.97,-3) {$\8$};
  \node(34) at (4,-3) {$\9$};

  \node(41) at (1,-4) {$\ten$};
  \node(42) at (2.03,-4) {$\lvn$};
  \node(43) at (2.97,-4) {$\twv$};

  \node(51) at (1,-5) {$\thn$};
  \node(52) at (2.03,-5) {$\twv$};
  
   \draw[->](11)--(12) node[midway, above, scale=.75]{$\bar\theta_\Dd$};
   \draw[->](12)--(14);
   
   \draw[->](21)--(22);
   \draw[->](22)--(23) node[midway, above, scale=.75]{$\bar\theta_\Dd$};
   \draw[->](24)--(23) node[midway, above, scale=.75]{$\tilde p$};

   \draw[->](34)--(33);

   \draw[->](43)--(42);
   \draw[->](42)--(41);

   \draw[->](52)--(51);

   \draw[->](11)--(21) node[midway, left=1pt, scale=.75]{$\tilde p$};
   \draw[->](14)--(24)node[midway, right=1pt, scale=.75]{$\ps^*$};
   
   \draw[->](21)--(41) node[midway, left=1pt, scale=.75]{$\bar{\couni{\!\<\<f}}$};
   \draw[->](22)--(42) node[midway, left=1pt, scale=.75]{$\bar{\couni{\!\<\<f}}$};
   \draw[->](23)--(33) node[midway, right, scale=.75]{\eqref{Bcd}};
   \draw[->](24)--(34) node[midway, right, scale=.75]{\eqref{Bcd}};

   \draw[->](33)--(43) node[midway, right=1pt, scale=.75]{$\bar{\couni{\!\<\<g}}$};
   
   \draw[->](41)--(51);
   \draw[->](42)--(52);

  \draw[-, double distance = 2pt] (43)--(52);
  
   \node at (2.5,-1.45){\circled1};
   \node at (2.5,-3){\circled2};

 \epic
\]

Commutativity of subdiagram \circled1 is given by \cite[3.7.3]{li}.

Subdiagram \circled2, without $\otimes \,u^{\<*}\<G$, is just \eqref{first diagram} applied to $\OY$.
This commutes by the definition of   
$\<\bar{\>\mathsf B}_{\mkern-.5mu\Dd}(\OY)\ (=\bchadmirado{\Dd}(\OY)$,
see \eqref{B=barB}).

Commutativity of the remaining subdiagrams is straightforward to verify.

\end{subcosa}

\begin{subcosa}
As for \eqref{second diagram}, since we are now dealing exclusively with confined maps, we may, as in the proof of Proposition~\ref{transint},
identify $(-)^!$ with a right adjoint of~$(-)_{\<*}\>$, and $\couni{\<\<(-)}$ with the corresponding
counit map.  

Let $\psi_\Dd\colon v_*g^!\to f^!u_*$ be the natural composite functorial map
$$
v^{}_{\<*}g^!\to f^!\<\<\fst v^{}_{\<*}g^! \overset{\ps_*}{=\!=} f^!u_*g_*g^! \xto{\couni{\!g}\>} f^!u_*.
$$
The left adjoints of the target and source of 
$\psi_{{\Dd}}$ are then $u^*\!\fst$ and $g_*v^*$ respectively; and the corresponding ``conjugate" map is just $\theta_{\Dd}\>$, cf.~\cite[Exercise 3.10.4]{li}.
Since $\theta_\Dd$ is an isomorphism, therefore so is $\psi_\Dd\>$, and $\psi_\Dd^{-\<1}$ is the map
conjugate to $\theta_\Dd^{-\<1}$ (see \cite[3.3.7(c)]{li}). This means that $\psi_\Dd^{-\<1}$ is the image of the identity map under the sequence of natural isomorphisms  (where $\Hom$ denotes maps of functors)
\begin{multline*}
\!\!\Hom(f^!u_*,f^!u_*)
\<\<\iso\!\Hom(\fst f^!u_*,u_*)
  \<\<\iso\!\Hom(u^*\!\fst f^!u_*, \id)    \\
\:\ \quad\xrightarrow[\textup{via}\;\theta_{\Dd}^{-\<1}]{\lift.3,\sim,}
\Hom(g_*v^*\! f^!u_*, \id)
  \<\<\iso\!\Hom(v^*\! f^!u_*,  g^!)       
\<\<\iso\!\Hom(f^!u_*, v_*g^!).
\end{multline*}
Explicating, one gets that $\psi_\Dd^{-\<1}$ is the natural composition\va{-2}
$$
f^!u_*  \!\xto{\!\eta^{}_{v}\>} 
v_*v{}^*\!f^!u_*  \!\to
v_*g^!g_{*}\>v{}^*\!f^!\>u_*  \!\xto{\<\<\textup{via}\;\theta_{\Dd}^{-\<1}}
v_*g^!u^*\!\fst f^!\>u_* \!\xto{\!\couni{\<\<\!f}\>\>}
v_*g^!u^*\<u_*\!\xto{\!\!\epsilon^{}_u\>\>}
v_*g^!.
$$
By the definition of $\bchadmirado{\Dd}$ when $g$ is proper (\S\ref{bchadmirado}), it results that
$\psi_\Dd^{-\<1}$ is the natural composition\va{-2}
$$
f^!u_*  \!\xto{\!\eta^{}_{v}\>} 
v_*v{}^*\!f^!u_*  \!\xto{\bchadmirado{\Dd}\>\>}
v_*g^!u^*\<u_*\!\xto{\!\!\epsilon^{}_u\>\>}
v_*g^!,
$$
that is, $\psi_\Dd^{-\<1}=\phi_\Dd$.

Commutativity of the following natural diagram, whose top row composes, by definition, to~the map induced by $\psi^{}_{{\Dd}}=\phi_\Dd^{-\<1}\<$, and whose bottom row composes to the identity, is an obvious  consequence of  Proposition~\ref{transint}. Commutativity of \eqref{second diagram} results.
  \[
   \bpic[xscale=3, yscale=1.5]
      \node (11) at (0,-1) {$f^!\>u^{}_*u^!$};
      \node (12) at (1,-1){$f^!\>u^{}_*g^{}_*\>g^!\<u^!$};
      \node (13) at (2,-1) {$f^!\<\fst\>v^{}_{\<*}\>g^!\<u^!$};
      \node (14) at (3,-1) {$v^{}_{\<*}\>g^!\<u^!$};

      \node (23) at (2,-2) {$f^!\<\fst\>v^{}_{\<*}v^!\<\<f^!$};
      \node (24) at (3,-2) {$v^{}_{\<*}v^!\<\<f^!$};

      \node (31) at (0,-3) {$f^!$};
      \node (33) at (2,-3) {$f^!\<\fst f^!$};
      \node (34) at (3,-3) {$f^!$};
      
      \draw [->] (12) -- (11) node[above, midway, scale=0.75]{$f^!\>u^{}_*\couni{\!\<{g}}$};
      \draw [-, double distance=2pt] (13) -- (12)
                              node[above=1pt, midway, scale=0.75]{$f^!\!\ps_*$};
      \draw [->] (14) -- (13) ;
 
      \draw [->] (24) -- (23) ;
      
      \draw [->] (33) -- (31) node[below=1pt, midway, scale=0.75]{$f^!\<\couni{\!\<\<f}$};
      \draw [->] (34) -- (33) ;
      
    
      \draw [->] (11) -- (31) node[left, midway, scale=0.75]{$f^!\couni{\!u}$};
      
      \draw [-, double distance=2pt] (13) -- (23)
                              node[left, midway, scale=0.75]{$f^!\<\fst\>v^{}_{\<*}\ps^!$};
      \draw [->] (23) -- (33) node[left, midway, scale=0.75]{$f^!\<\fst\couni{\!v}$};
      
      \draw [-, double distance=2pt] (14) -- (24)
                              node[right=1pt, midway, scale=0.75]{$v_*\ps^!$};
      \draw [->] (24) -- (34) node[right, midway, scale=0.75]{$\couni{\!v}$};
   \epic
  \]
\end{subcosa}
\end{cosa}

\section{Example:  Classical Hochschild homology of scheme-maps.}
\label{comparison}
This section illustrates some of the foregoing with a few remarks about earlier-known  Hochschild homology and cohomology functors on schemes, especially with regard to their relation with  the bivariant  functors arising from Example~\ref{ex-setup}(b). Global Hochschild theory goes back to Gerstenhaber and Shack, and has subsequently been developed by several more authors. Here we concentrate on the functors defined by C\u{a}ld\u{a}raru and Willerton (\cite{c1} and~\cite{cw}). \va2

 For smooth schemes
over a characteristic-zero field, bivariant  homology
groups coincide with  classical Hochschild homology groups; but the classical Hochschild cohomology groups 
are only direct summands of the bivariant ones (\S\S\ref{Cld}--\ref{coh-different}).
Even in this special case, then,
the bivariant theory has more operations on homology.

\begin{cosa}
Let $f \colon X \to Y$ be a quasi-compact quasi-separated scheme\kf-map, and  
$$
\delta=\delta_{\<\<f}: X\to X\times_Y X
$$ 
the associated diagonal map---which is  quasi-compact and quasi-separated,
 \cite[p.\,294, (6.1.9)(i), (iii), and p.\,291, (6.1.5)(v)]{EGA1}.

The  \emph{pre-Hochschild complex} of $f$  is
$$
\Hsch{\<f}\set\LL\delta^*\delta_*\OX.
$$
(When $f$ is flat,  the prefix ``pre-" can be dropped, see~\cite[p.\,222, 2.3.1]{BF}.)

\goodbreak

The complex  $\Hsch{\<f}$ gives rise to  \emph{classical Hochschild cohom\-ology functors}
$$
\CH\CH^{\>\>i}_{X|Y}(F\>)\set H^{i\>}\R\>\sHom_{\<X}(\CH_f, F\>)\qquad (i\in\mathbb Z,\ F\in\Dqc(X)),
$$
and their global counterparts (cf.~\cite[p.\,217, 2.1.1]{BF})
$$
\HH^{\>i }_{X|Y}(F\>)\set\ext^{\>i}_{\<X}(\CH_f, F\>)= \textup{H}^{\>i  }\big(X,\R\> \sHom^{}_{\<X}(\CH_f, F\>)\big).
$$

When $X$ is affine, say $X=\spec(A)$, 
and $Y=\spec(k)$ with $k$ a field, this terminology is compatible with the
 classical one for $A$-modules.

The global Hochschild cohomology
$$
\HH^*_{X|Y}(F\>)\set\oplus_{i\in\ZZ}\HH^{\>i }_{X|Y}(F\>)=\ext^*_\sX\<(\CH_f, F\>)
$$
is a symmetric graded module over the commutative-graded ring
$$
H_\sX \set\oplus_{i\ge0}\,\textup{H}^i(X\<\<,\OX\<),
$$
see Proposition~\ref{E is H-graded}. 

The sheafified version of the adjunction $\LL\delta^*\!\dashv\<\delta_*$  
(see e.g., \cite[3.2.3(ii)]{li}), gives, furthermore,
\begin{align*}
\HH^{\>i }_{X|Y}(F\>)
&\cong \textup{H}^{\>i}\big(X\!\times_Y\!X,\, \delta_*\R\>\sHom^{}_{\<X}(\LL\delta^*\<\delta_*\OX, F\>)\big)\\
&\cong \textup{H}^{\>i}\big(X\!\times_Y\!X,\, \R\>\sHom^{}_{\<X\times_{\<Y}X}(\delta_{\<*}\OX, \delta_{\<*}F\>)\big)\\
&=\ext^i_{\<X\times_YX}(\delta_{\<*}\OX,\delta_{\<*}F\>).
\end{align*}
\end{cosa}

\begin{subprop}\label{coh comm} 
Under Yoneda composition, the classical Hochschild cohomology associated to\/ $f,$
$$
\HH^*_{\<\<X|Y}(\OX\<)\cong\oplus_{i\in\ZZ}\Hom_{\>\D(X\times_Y X)}\<\<(\<\delta_{\<*}\OX,\delta_{\<*}\OX[i\>]),
$$ 
is a graded-commutative\/ $H_{\<X}$-algebra, of which\/ $H_{\<X}$ is\va1 a graded-ring retract. 
\looseness=-1

\end{subprop}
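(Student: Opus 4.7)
The plan is to realize $\HH^*_{X|Y}(\OX)$ as the endomorphism ring of a monoidal unit and then invoke the general principle of \S\ref{unital->gradedcomm}. Via the adjunction $\LL\delta^*\dashv\delta_*$, Yoneda composition gives a graded-ring isomorphism
\[
\HH^*_{X|Y}(\OX)=\ext^*_{\<\<X}(\Hsch{f},\OX)\iso\oplus_{i\in\ZZ}\Hom_{\D(X\times_Y X)}(\delta_*\OX,\delta_*\OX[i\>]),
\]
so it suffices to treat the right-hand side.

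First I would equip $\Dqc(X\times_Y X)$ with the convolution product
\[
F*G\set \R p_{13,*}\bigl(\LL p_{12}^{\<\<*}\<F\Otimes{X\times_Y X\times_Y X}\<\LL p_{23}^{\<\<*}\<G\bigr),
\]
where the $p_{ij}$ are the projections from $X\times_Y X\times_Y X$. Quasi-compact quasi-separated flat base change and the projection formula (see \cite[3.9.5, 3.9.4]{li}) produce natural degree\kf-$0$ isomorphisms $\lambda\colon \delta_*\OX * F\iso F$ and $\rho\colon F*\delta_*\OX\iso F$ satisfying $\lambda^{}_{\delta_*\OX}=\rho^{}_{\delta_*\OX}$, so that $(*,\delta_*\OX,\lambda,\rho)$ is a unital product on $\Dqc(X\times_Y X)$ in the sense of \S\ref{unital}.

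Second, by \S\ref{unital->gradedcomm} the evaluation map at $\delta_*\OX$ exhibits $\bE_{X\times_Y X}(\delta_*\OX,\delta_*\OX)$ as a graded-commutative graded-algebra retract of the graded center $\CC_{\Dqc(X\times_Y X)}$. The only substantive point is to check that the product on $\bE_{X\times_Y X}(\delta_*\OX,\delta_*\OX)$ induced by $*$ via $\lambda$ agrees with Yoneda composition; this is the standard Eckmann--Hilton computation for endomorphisms of a monoidal unit, carried out here using the coherence of $\lambda,\rho$ and the identity $\lambda^{}_{\delta_*\OX}=\rho^{}_{\delta_*\OX}$. I expect this compatibility, together with the verification of unitality of $\delta_*\OX$ for $*$ on arbitrary $F$, to be the main technical step; everything else is formal.

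Finally, for the retract statement about $H_X$, I would note that $\Hsch{f}=\LL\delta^*\delta_*\OX$ inherits from the symmetric monoidal functor $\LL\delta^*$ the structure of a commutative algebra object in $\Dqc(X)$, with unit $\iota\colon\OX\to\Hsch{f}$ obtained by applying $\LL\delta^*$ to the unit $\OX_{X\times_Y X}\to\delta_*\OX$ of $\delta_*\OX$ as algebra, and with augmentation $\epsilon\colon\Hsch{f}\to\OX$ the counit of $\LL\delta^*\dashv\delta_*$ at $\OX$. The triangle identity for this adjunction gives $\epsilon\smallcirc\iota=\id_{\OX}$. Precomposition with $\epsilon$ yields a graded-ring homomorphism $H_X=\ext^*_{\<\<X}(\OX,\OX)\to\HH^*_{X|Y}(\OX)$ (respect for Yoneda product is immediate from functoriality), and precomposition with $\iota$ is a graded-ring retraction of it, whence $H_X$ is a graded-ring retract, as claimed.
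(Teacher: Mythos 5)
Your overall architecture---exhibit $\delta_{\<*}\OX$ as the unit of a unital product and invoke \S\ref{unital->gradedcomm}, then obtain the retract from a splitting $\OX\to\Hsch{\<f}\to\OX$---matches the paper's, and your retract argument is essentially the adjoint transpose of the paper's: under $\LL\delta^*\dashv\delta_*$, precomposition with $\epsilon$ corresponds to the functor $\delta_*$ and precomposition with $\iota$ to $p_*$ ($p$ the first projection), so both are multiplicative for the transported Yoneda product by functoriality; this is exactly how the paper gets the retract, via \ref{* and T}(iii) applied to $\delta$ and to $p$. The genuine gap is in your choice of monoidal structure. The unit isomorphisms for the kernel convolution $F*G=\R p_{13,*}\bigl(\LL p_{12}^{*}F\Otimes{}\LL p_{23}^{*}G\bigr)$ are obtained by base change across the cartesian square formed by $\delta$ and $p_{12}$ (resp.\ $p_{23}$). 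But $p_{12}$ and $p_{23}$ are base changes of $f$ itself, hence flat only when $f$ is, and the square in question need not be tor-independent otherwise; the flat base-change theorem you cite therefore does not apply. Proposition~\ref{coh comm} is stated for an arbitrary quasi-compact quasi-separated $f$ (flatness is explicitly \emph{not} assumed in \S\ref{comparison}; without it one only has the pre-Hochschild complex), so your argument as written establishes the statement only for flat $f$.

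The paper avoids this entirely by not asking for a monoidal structure on all of $\Dqc(X\times_Y X)$: it defines a unital product only on the full subcategory $\D_*$ of objects of the form $\delta_{\<*}G$, namely $E\tst F\set\delta_{\<*}(p_*E\Otimes{\<X}p_*F)$. Since $p\smallcirc\delta=\id_X$, the unit isomorphisms come for free, with no base change at all, and \S\ref{unital->gradedcomm} applies verbatim to this subcategory. If you substitute this product for your convolution (or restrict to flat $f$, the case relevant to Example~\ref{ex-setup}(b)), the rest of your argument goes through. Note also that \S\ref{unital->gradedcomm} already yields graded-commutativity of the \emph{composition} product on $\bE(\CO,\CO)$ directly, so the Eckmann--Hilton comparison you single out as the main technical step is not needed as a separate verification; the real work in either approach is checking that the chosen product is a graded functor in the sense of \S\ref{unital} (signs and compatibility with translation).
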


\begin{proof}
Commutativity is well-known, cf.~ \cite[\S2.2]{BF}.  Here is one quick way to see it.
Let $\D_*\subset\D(X\times_YX)$ be the full subcategory whose objects are the complexes  
$\delta_{\<*}G\ (G\in\D(X))$.  With $p\colon X\times_YX\to X$ the first projection, set\looseness=-1
$$
E\tst\<F\set \delta_{\<*}(p_*E\Otimes{\<X}p_*F\>)\qquad(E,\;F\in \D_*).
$$
There are obvious functorial isomorphisms
\[
\lambda\colon(\delta_{\<*}\OX\tst -)\iso \id_{\D_*}\>, \qquad \rho\colon (-\tst \<\delta_{\<*}\OX)\iso \id_{\D_*}\!.
\] 
Then $(\tst, \delta_{\<*}\OX, \lambda, \rho)$ is a $\ZZ$-graded unital product,
and  the commutativity follows (see \ref{unital->gradedcomm}).

The  $H_{\<X}$-algebra structure is given by~\ref{* and T}(iii) (with $f$  replaced by $\delta$),
as is a left inverse for the structure map (with $f$ replaced by $p$).
\end{proof}

\begin{cosa}\label{graded centers} 
As in \S\ref{unital->gradedcomm},  $\HH^*_{\<\<X|Y}(\OX\<)$ is a graded-algebra retract of the graded center $\CC_*$ of $\D_*$. There is also a natural graded-ring homomorphism from $\CC_*$ to the graded center~$\CC$ of~$\D(X)$, induced by the essentially surjective functor \mbox{$p_*\colon\D_*\to\D(X)$.} Thus there is a natural graded-ring homomorphism
\begin{equation}\label{characteristic}
\varpi\colon \HH^*_{\<\<X|Y}(\OX\<)\to\CC.
\end{equation}
For flat $f$, this is canonically isomorphic to the \emph{characteristic homomorphism} that plays an important role
in \cite{BF} (where nonflat maps are also treated). It takes a $\D(X\!\times_Y\!X)$-map $\alpha\colon\delta_{\<*}\OX\to\delta_{\<*}\OX[i\>]$ to the natural functorial composition
$$
A\cong\OX\<\Otimes{X}\!A\cong p_*\delta_{\<*}\OX\<\Otimes{X}\!A\xto{\!\textup{via}\;\alpha\<}p_*\delta_{\<*}\OX[i\>]\<\Otimes{X}\!A\,\cong \,\OX[i\>]\<\Otimes{X}\!A\,\cong\, A[i\>].
$$

One checks, for example, that in~\ref{coh comm}, the left inverse---induced by $p_*$---for 
$H_{\<X}\to \HH^*_{\<X|S}(\OX)$ is the composition $\textup{ev}_{\OX}\smallcirc \varpi$  
(see~\eqref{evaluation}).

\end{cosa}  

\begin{cosa}
One has also the \emph{sheafified Hochschild homology functors} 
$$
\CH\CH_i^{X|Y}\<\<(F\>)\set  H^{-i}(\Hsch{\<f}\Otimes{\<\<X}F\>)\qquad (i\in\mathbb Z,\ F\in\Dqc(X)),
$$
and their global counterparts, 
$$
\textup{HH}_i ^{X|Y}\<\<(F\>)\set 
\tor_i ^{X}\<\<(\CH_f, F\>)=
\textup{H}^{-i}(X, \CH_f\Otimes{\<\<X}\> F\>).
$$

\goodbreak
The functorial \emph{projection isomorphisms} \cite[p.\,139, 3.9.4]{li}\va{-1}
$$
\pi(E,F\>)\colon\delta_{\<*}(\delta^*\<\delta_{\<*}E\,\Otimes{\<\<X\times_Y X}\,F\>)
\<\iso\<\delta_{\<*}E\,\Otimes{\<\<X\<\times_Y\<X}\,\delta_{\<*}F
\<\iso\<
\delta_{\<*}(E\,\Otimes{\<\<X}\,\delta^*\<\delta_{\<*}F\>)
$$
$(E,F\in\Dqc(X))$, give, furthermore,
\begin{align*}
\textup{HH}_i ^{X|Y}\<\<(F)
&\cong\textup{H}^{-i}(X\!\times_Y\!X,
\delta_{\<*}(\delta^*\<\delta_{\<*}\OX\Otimes{\<\<X}\> F\>))\\
&\cong \textup{H}^{-i}(X\!\times_Y\!X,\,
\delta_{\<*}\OX\Otimes{\<\<X\<\times_{\<Y}\<X} 
\delta_{\<*}F\>)\\
&=\tor_i ^{X\<\times_{\<\<Y}\<X}\<\<(\delta_{\<*}\OX\<, \delta_{\<*}F\>).
\end{align*}

\end{cosa}

\begin{cosa}\label{Cld}
C\u{a}ld\u{a}raru and Willerton work over a ``geometric category of spaces" in which some form of Serre duality holds (see~\cite[end of Introduction]{cw}), for example, the category of smooth 
projective varieties over an algebraically closed field $\mathbf k$ of characteristic zero.
What  they call the Hochschild cohomology of such a variety~$X$ is 
simply $\HH^*_{X|\spec(\mathbf k)}(\OX)$.

Their Hochschild homology,  
\[
\HH_i^{\text{cl}}\!\big(X) \set \Hom_{\D(X\times_{\mathbf k}\> X)}(\delta_{\<*}\sHom(\Omega^n_{X|\spec(\mathbf k)}[n],\OX\<),\>\delta_{\<*}\CO_{X}[-i\>]\big) \quad\ \ (i\in \ZZ),
\]
(where $n=\dim X$ and \smash{$\Omega^n_{X|\spec(\mathbf k)}$} is the sheaf of relative differential $n$-forms)
 is shown in \mbox{\cite[\S4.2]{cw}} to be isomorphic to the global Hochschild homology
$\HH_{\>i }^{X|\spec(\mathbf k)}(\OX\<)$.     (The ``cl" in the notation
indicates either  ``C\u{a}ld\u{a}raru" or ``classic.")
Their definitions and arguments actually apply to any
essentially smooth $f\colon X\to Y\>$ (\S\ref{shriek});\vspace{.7pt}  so when 
such an $f$ is given we can substitute~$Y$ for 
$\spec(\mathbf k)$ in the preceding.
\end{cosa}

\begin{cosa}
Also, it is indicated near the beginning\- of \cite[\S5]{cw}
that in their setup,  Hochschild homology is isomorphic to the bivariant $\HH_*(X)$ (\S\ref{ho-co}) associated with Example~\ref{ex-setup}(b).
This can be seen, more generally, as follows.

\penalty-2000
First, for any flat $f\colon X\to Y$,  with  $\pi_i\colon X\!\times_Y\!X \to X\ (i=1,2)$  the usual projections, and $p(-,-)$ the projection isomorphism in \eqref{projection}, one has, for any $F\in\Dqc(X)$,  the natural composite isomorphisms
\begin{equation}\label{Hsch and Hh}
 \begin{aligned}
  \mkern-10mu\zeta^{}_i(F\>)\colon 
  \delta^*\<\delta_*\OX\Otimes{\sX}F \cong
&\;\pi_{i*}\delta_*(F\>\Otimes{\sX}\>\delta^*\<\delta_*\OX)\\[2pt]
  \xrightarrow{\!\!\pi_{i*}p(F\<\<,\>\>\delta_*\OX)^{-\<1}\!\!\!\!}
&\;\pi_{i*}(\delta_*F\Otimes{\sX}\>\delta_*\OX)\cong
  \pi_{i*}(\delta_*\OX\Otimes{\sX}\>\delta_*F)\\[2pt]
  \xrightarrow{\pi_{i*}p(\<\OX\<\<,\>\delta_*F\>)\,}
&\;\pi_{i*}\delta_{\<*}(\OX\!\Otimes{\sX}\delta^*\<\delta_*F)
  \cong\delta^*\<\delta_*F.
 \end{aligned}
\end{equation}

It can be shown that the isomorphisms $\zeta^{}_1$ and $\zeta^{}_2$ are in fact equal. \vspace{1pt}

Now suppose the map $x\colon X\to S$ is flat, with Gorenstein fibers. Then, as is well-known, the complex $\omega_x\set x^!\OS$ is \emph{invertible,} that is, each point of $X$ has a neighborhood 
$U$ over which the restriction of $x^!\OS$ is $\D(U)$-isomorphic to
$\mathcal O_U[m]$ for some $m$ (depending on $U$, but constant on any connected component of $X$). The complex $\omega_x^{-\<1}\set\R\>\sHom(\omega_x,\OX)$\vspace{.7pt} is also invertible, and, in $\D(X)$,
$\omega_x\otimes_{\OX}\omega_x^{-\<1}=\omega_x\Otimes{\<X}\omega_x^{-\<1}\cong\OX$. There are natural isomorphisms
  \begin{align*}
 \HH_i(X)=\ext^{-i}_{\<\<X}(\Hsch{\<X},\omega_x)
 \iso &\ext^{-i}_{\<\<X}(\delta^*\<\delta_{\<*}\OX\Otimes{\<X}\omega_x^{-\<1}\<, \>\OX)\\
\iso &\ext^{-i}_{\<\<X}(\delta^*\<\delta_{\<*}\omega_x^{-\<1}\<,\> \OX)\qquad\textup{(see \eqref{Hsch and Hh})}\\
\iso &\ext^{-i}_{\<X\<\<\times_{\<\<S}X}(\delta_{\<*}\omega_x^{-\<1},\>\delta_{\<*}\OX).
 \end{align*}
In particular, if $x$ is \emph{essentially smooth,} 
of constant relative dimension $n$ \mbox{\cite[5.4]{Nk}},
then  $\omega_x\cong \Omega^n_{X|S}[n]$, yielding in this case that 
$\HH_i(X)\cong \HH_i^{\text{cl}}(X).$
\end{cosa}

\begin{cosa}\label{coh-different}
For cohomology, the situation is different. Referring to Example~\ref{ex-setup}(b), let $x\colon X\to S$ be the unique $\SS$-map,
and $\delta\colon X\to X\times_S X$ the diagonal. 

There are natural functorial maps 
$\delta_{\<*}\to\delta_{\<*}\LL\delta^*\<\delta_{\<*}\to\delta_{\<*}$ composing to the identity, so the natural identifications 
$$
\HH^*_{\<X|S}(\OX)\<\cong\ext^*_{\<X}(\delta_{\<*}\OX,\delta_{\<*}\OX)\!\!\quad\textup{and}\quad\!\!
\HH^*\<(X)\<\cong\ext^*_{\<X}(\delta_{\<*}\OX,\delta_{\<*}\LL\delta^*\<\delta_{\<*}\OX\<)
$$
entail that \emph{the classical Hochschild cohomology\/ $\HH^*_{\<X|S}(\OX)$ is, as a graded group, a 
direct summand of the bivariant cohomology} $\HH^*\<(X)$. 

The 
projection $\HH^*\<(X)\!\twoheadrightarrow\!\HH^*_{\<X|S}(\OX)$ can also be viewed as the map
\begin{equation*}
\HH^*\<(X)=\ext^*_{\<X}(\Hsch{x}\>, \Hsch{x})\to \ext^*_{\<X}(\Hsch{x}\>, \OX)=\HH^*_{\<\<X|S}(\OX\<).
\end{equation*}
induced by $\epsilon_\delta(\OX)\colon\Hsch{x}=\LL\delta^*\<\delta_{\<*}\OX\to\OX$.\va2

Since  $\HH^*_{\<\<X|S}(\OX\<)$ is graded-commutative (Proposition~\ref{coh comm}), the composition of  $\varpi$ in \eqref{characteristic} and~ $\textup{ev}_{\Hsch{x}}$ in \eqref{evaluation} gives a natural homomorphism of graded algebras over $H_{\<X}$,
$$
\HH^*_{\<\<X|S}(\OX\<)\to\HH^*_{\<\<X|S}(\Hsch{x})=\HH^*\<(X),
$$
with image in the graded center 
of  $\HH^*\<(X)$.

Thus $\HH^*\<(X)$ \emph{has a natural structure of graded} $\HH^*_{\<\<X|S}(\OX\<)$-algebra.

\end{cosa}

\end{document}